\newtheorem{thm}{Theorem}[section]
\newtheorem{lem}[thm]{Lemma}
\newtheorem{cor}[thm]{Corollary}
\newtheorem{prop}[thm]{Proposition}
\newtheorem{conj}[thm]{Conjecture}
\newtheorem{question}[thm]{Question}
\theoremstyle{definition}
\newtheorem{example}[thm]{Example}
\theoremstyle{remark}
\newtheorem{rem}[thm]{Remark}
\numberwithin{equation}{section}
\begin{document}

%Referring commands:
\newcommand{\thmref}[1]{Theorem~\ref{#1}}
\newcommand{\secref}[1]{Section~\ref{#1}}
\newcommand{\lemref}[1]{Lemma~\ref{#1}}
\newcommand{\propref}[1]{Proposition~\ref{#1}}
\newcommand{\corref}[1]{Corollary~\ref{#1}}
\newcommand{\remref}[1]{Remark~\ref{#1}}
\newcommand{\eqnref}[1]{(\ref{#1})}
\newcommand{\exref}[1]{Example~\ref{#1}}

%Simplified symbols:
\newcommand{\nc}{\newcommand}
\nc{\Z}{{\mathbb Z}}
%\nc{\hZ}{{\hf+\mathbb Z}}
\nc{\hZ}{{\underline{\mathbb Z}}}
\nc{\C}{{\mathbb C}}
\nc{\N}{{\mathbb N}}
\nc{\F}{{\mf F}}
\nc{\Q}{\mathbb {Q}}
\nc{\la}{\lambda}
\nc{\ep}{\delta}
\nc{\h}{\mathfrak h}
\nc{\n}{\mf n}
\nc{\A}{{\mf a}}
\nc{\G}{{\mathfrak g}}
\nc{\SG}{\overline{\mathfrak g}}
\nc{\DG}{\widetilde{\mathfrak g}}
\nc{\D}{\mc D}
\nc{\Li}{{\mc L}}
\nc{\La}{\Lambda}
\nc{\is}{{\mathbf i}}
\nc{\V}{\mf V}
\nc{\bi}{\bibitem}
\nc{\NS}{\mf N}
\nc{\dt}{\mathord{\hbox{${\frac{d}{d t}}$}}}
\nc{\E}{\EE}
\nc{\ba}{\tilde{\pa}}
\nc{\half}{\frac{1}{2}}
\nc{\mc}{\mathcal}
\nc{\mf}{\mathfrak} \nc{\hf}{\frac{1}{2}}
\nc{\hgl}{\widehat{\mathfrak{gl}}}
\nc{\gl}{{\mathfrak{gl}}}
\nc{\hz}{\hf+\Z}
\nc{\dinfty}{{\infty\vert\infty}}
\nc{\SLa}{\overline{\Lambda}}
\nc{\SF}{\overline{\mathfrak F}}
\nc{\SP}{\overline{\mathcal P}}
\nc{\U}{\mathfrak u}
\nc{\SU}{\overline{\mathfrak u}}
\nc{\ov}{\overline}
\nc{\wt}{\widetilde}
\nc{\wh}{\widehat}
\nc{\sL}{\ov{\mf{l}}}
\nc{\sP}{\ov{\mf{p}}}
\nc{\osp}{\mf{osp}}
\nc{\spo}{\mf{spo}}
\nc{\hosp}{\widehat{\mf{osp}}}
\nc{\hspo}{\widehat{\mf{spo}}}
\nc{\hh}{\widehat{\mf{h}}}
\nc{\even}{{\bar 0}}
\nc{\odd}{{\bar 1}}
\nc{\mscr}{\mathscr}

\newcommand{\blue}[1]{{\color{blue}#1}}
\newcommand{\red}[1]{{\color{red}#1}}
\newcommand{\green}[1]{{\color{green}#1}}
\newcommand{\white}[1]{{\color{white}#1}}
%%

%% added by WW

\newcommand{\aaf}{\mathfrak a}
\newcommand{\bb}{\mathfrak b}
\newcommand{\cc}{\mathfrak c}
\newcommand{\qq}{\mathfrak q}
\newcommand{\qn}{\mathfrak q (n)}
\newcommand{\UU}{\bold U}
\newcommand{\VV}{\mathbb V}
\newcommand{\WW}{\mathbb W}
\nc{\TT}{\mathbb T}
\nc{\EE}{\mathbb E}
\nc{\FF}{\mathbb F}
\nc{\KK}{\mathbb K}

 \advance\headheight by 2pt

\title[Character formulae and canonical bases of type $A/C$]
{Character formulae for queer Lie superalgebras and canonical bases of types $A/C$}

\author[Cheng]{Shun-Jen Cheng$^\dagger$}
\thanks{$^\dagger$Partially supported by a MoST and an Academia Sinica Investigator grant}
\address{Institute of Mathematics, Academia Sinica, Taipei,
Taiwan 10617} \email{chengsj@math.sinica.edu.tw}

\author[Kwon]{Jae-Hoon Kwon$^{\dagger\dagger}$}
\thanks{$^{\dagger\dagger}$Partially supported by Samsung Science and Technology Foundation under Project Number SSTF-BA1501-01.}
\address{Department of Mathematical Sciences, Seoul National University, Seoul 08826, Korea
 }
\email{jaehoonkw@snu.ac.kr}

\author[Wang]{Weiqiang Wang$^{\dagger\dagger\dagger}$}
\thanks{$^{\dagger\dagger}$Partially supported by an NSF grant.}
\address{Department of Mathematics, University of Virginia, Charlottesville, VA 22904}
\email{ww9c@virginia.edu}

\begin{abstract}
For the BGG category of $\mathfrak{q}(n)$-modules of  half-integer weights, a Kazhdan-Lusztig conjecture \`a la Brundan is formulated in terms of categorical canonical basis of the $n$th tensor power of the natural representation of the quantum group of type $C$. For the BGG category of $\mathfrak{q}(n)$-modules of congruent non-integral weights, a Kazhdan-Lusztig conjecture is formulated in terms of canonical basis of a mixed tensor of the natural representation and its dual of the quantum group of type $A$.
We also establish a character formula for the finite-dimensional irreducible $\mathfrak{q}(n)$-modules of half-integer weights in terms of type $C$ canonical basis of the corresponding $q$-wedge space.
\end{abstract}

\subjclass[2010]{17B67}

\maketitle

\setcounter{tocdepth}{1}
\tableofcontents

%%%%
%%%%
\section{Introduction}

\subsection{}
The celebrated Kazhdan-Lusztig theory for semisimple Lie algebras is formulated in terms of canonical
bases of the Hecke algebras associated to the corresponding Weyl groups. For basic Lie superalgebras (which can be viewed as super counterparts of semisimple Lie algebras) as well as for the queer Lie superalgebras $\qn$ (see, e.g., \cite{CW}),
triangular decomposition is available, which allows one to define the BGG category $\mathcal O$. However, the linkage in $\mathcal O$ is no longer controlled by
the Weyl groups, and thus the conventional formulation of Kazhdan-Lusztig theory does not apply to Lie superalgebras.

In \cite{CLW} Lam and two of the authors proved  Brundan's Kazhdan-Lusztig conjecture for the BGG category $\mathcal O$ of integer-weight modules for the Lie superalgebra $\mathfrak{gl}(m|n)$  (also see \cite{BLW}); Brundan's conjecture \cite{Br1} is formulated
 in terms of the type $A$ canonical basis of a tensor product module \cite{Lu}.
It is shown in \cite{CMW}  that the irreducible character
problem for the BGG category of $\mf{gl}(m|n)$-modules of arbitrary weights can be reduced to the setting of integer weights by using various category equivalences induced by twisting, odd reflection, and parabolic induction functors.

A Kazhdan-Lusztig theory for the Lie superalgebras $\mathfrak{osp}(2m+1|2n)$
has recently been formulated and established in \cite{BW} via new canonical bases arising from quantum symmetric pairs.
This approach  has been adapted further to establish a Kazhdan-Lusztig theory for $\mathfrak{osp}(2m|2n)$ \cite{B15}.
In these papers, two variants of the category $\mathcal O$ for every $\mathfrak{osp}$-type superalgebra were considered,
one with integer weights and another with half-integer weights, and their corresponding Kazhdan-Lusztig theories utilize  canonical
bases from somewhat different quantum algebras.

%%%%
\subsection{}

An analogous (still open) conjecture for the characters  of irreducible $\qn$-modules of {\em integer weights} in the category $\mathcal O$ was formulated in \cite{Br2} in terms of
a {\em type $B$} canonical basis \cite{Lu90, Lu, Ka91}; see Remark~\ref{rem:remark on conj for integral weights} for an update and a modification.
Brundan established a character formula for the
finite-dimensional irreducible modules of integer weights in terms of type $B$ dual canonical basis of a $q$-deformed wedge space.
However there is no reason to restrict oneself to the integer-weight modules only.
Characters for the finite-dimensional irreducible $\qn$-modules of half-integer weights have recently been obained in \cite{CK},
and they were shown to be closely related to those of the finite-dimensional irreducible
characters of integer weights given in \cite{Br2}. We remark here that an algorithm for all finite-dimensional irreducible characters of $\qn$-modules was developed earlier in  \cite{PS2}.

We are interested in understanding the characters of irreducible modules in the whole BGG category of $\qn$-modules, but not just those with integer-weight highest weights.
A reduction similar to (but more complicated than) \cite{CMW} is established for a queer Lie superalgebra by Chih-Whi Chen in \cite{Chen}.
By Chen's result, which confirmed our expectation,
the problem of computing the characters of the irreducible modules of arbitrary highest weights in the category $\mc O$ for a queer Lie superalgebra is reduced to the problem of
computing them in the following three categories:
(i)~ a BGG category $\mc O_{n,\Z}$ of the $\qn$-modules of integer weights (see the main conjecture of \cite[(4.56)]{Br2}),
(ii)~ a BGG category $\mc O_{n,\hZ}$ (where $\hZ =\hf +\Z$) of the $\qn$-modules of half-integer weights,
(iii)~ a BGG category $\mc O_{n,s^l}$ of the $\qn$-modules of ``congruent $\pm s$-weights",
for $s \in \C \setminus \hf \Z$ and $l \in \{0, 1, \ldots,n\}$ (see Section~ \ref{subsec:KL:A} for a precise definition).

%%%%
\subsection{}

A main goal of this paper is to formulate a Kazhdan-Lusztig conjecture for the categories $\mc O_{n,\hZ}$  (Conjecture~ \ref{conj:half-integer2})
and $\mc O_{n,s^l}$ (Conjecture~ \ref{conj:KL conj for s-weights}).
%These conjectures when combined with the conjecture for $\mc O_{n,\Z}$ in \cite[(4.56)]{Br2} and the result in \cite{Chen} completely describe
%the characters of the irreducible and tilting modules of arbitrary weights in the BGG category for a queer Lie superalgebra.
We also prove results {and formulate various conjectures} on connections among the canonical bases of types $A/B/C$, which seem to indicate new connections
between representations of $\qn$ and general linear Lie algebra $\mathfrak{gl}(l|n-l)$ for various $l$. {The present paper contains several conjectures, and the reader is invited to prove some (or all) of them. It is also our hope that some of the heuristic viewpoints in the paper may inspire the reader to formulate his/her own conjectures!}

%%%%
\subsection{}
The Kazhdan-Lusztig conjecture for $\mc O_{n,\hZ}$ in the first version of this paper (see Conjecture~ \ref{conj:half-integer})
is formulated in terms of canonical basis of the tensor module $\VV^{\otimes n}$,
where $\VV$ is the natural representation of the quantum group $\UU=\UU_q(\mf c_\infty)$ of {\em type $C$}.
However, Tsuchioka's computation \cite{Tsu} shows this canonical basis (as well as analogous canonical basis of type $B$ in \cite{Br2}; see \eqref{TLMN})
have no positivity property, and hence our ``type $C$" Conjecture~ \ref{conj:half-integer} as well as Brundan's original ``type $B$" KL conjecture require correction.
But there is plenty of evidence (e.g. translation functor, linkage) supporting the relevance of type $C/B$ canonical bases in the representation theory of the queer Lie superalgebra.
To this end, the translation functors on the level of Grothendieck group of $\mc O_{n,\hZ}$ are shown to satisfy the type $C$ Lie algebra relations.
% (for example, they indeed provided irreducible character formulae  in the finite-dimensional module categories).

The situation is reminiscent of the recent advances in algebraic groups, where canonical bases of affine Hecke algebras are replaced by $p$-canonical bases; see Riche-Williamson \cite{RW}. Recall that Webster \cite{Web} has categorified the tensor product of highest weight integrable modules, and so there is a categorical canonical basis
(called an orthodox basis by Webster, or  can$\oplus$nical basis here to indicate positivity)
%\footnote{\red{We use the term ``can$\oplus$nical basis" to distinguish it from the Lusztig's canonical basis. }}
available in place of canonical basis of $\VV^{\otimes n}$.
We reformulate the Kazhdan-Lusztig conjecture for $\mc O_{n,\hZ}$
in terms of Webster's  can$\oplus$nical basis of $\VV^{\otimes n}$  (see Conjecture~ \ref{conj:half-integer2}).

Moreover, we formulate an ``$C=A$" conjecture that the tilting characters in a full subcategory ${}_{\min} \mc O^\Delta_{n,\hZ}$ of $\mc O^\Delta_{n,\hZ}$
(which correspond  to the subspace \eqref{TTs:min})
are solved by the  type $A$ canonical bases; see Conjecture~\ref{conj:KLC=A}.
As a supporting evidence we show that the type $C$ canonical basis in the subspace \eqref{TTs:min} are indeed type $A$ canonical basis; see Corollary~\ref{cor:ACB=CCB}.
For this subspace, we conjecture that Webster's can$\oplus$nical basis coincides with the Lusztig's type $C$ (and hence type $A$) canonical basis.

We also establish  (see Theorem \ref{thm:Main}) a character formula for the finite-dimensional irreducible modules in $\mc O_{n,\hZ}$ in terms of type $C$
dual canonical basis of the corresponding $q$-wedge space.

Conjecture~ \ref{conj:KL conj for s-weights} for $\mc O_{n,s^l}$ is formulated in terms of {\em type $A$} canonical basis on
$\VV^{\otimes l}_s \otimes {\VV^*_s}^{\otimes n-l}$, where $\VV_s$ is the natural representation of $\UU_q(\mf{sl}_\infty)$ (just as in \cite{Br1, CLW} for $\mf{gl}(l|n-l)$); in particular, the conjecture of $\mc O_{n,s^l}$ is independent of the scalars $s \not \in \hf \Z$.

While our formulation is analogous to \cite{Br2},  we find it remarkable to see a natural
appearance of canonical/can$\oplus$nical basis arising from yet
another classical quantum group in the Kazhdan-Lusztig formulation for Lie superalgebras. As far as we know, there is no connection between
the BGG category of a semisimple Lie algebra and
the canonical/can$\oplus$nical bases of types $B/C/D$.

We prove that the type $C$ canonical basis on a $q$-wedge space of the natural representation indeed coincides with that of type $A$ canonical basis.
%Perhaps not unrelatedly, there does not seem to be any
%natural formulation of a parabolic generalization of the category $\mathcal O$ of $\qn$-modules.

%%%%
\subsection{}

The paper is organized as follows.
In Section~\ref{sec:prelim}, we recall the definition of the quantum group $\UU$, its tensor module $\VV^{\otimes n}$ and the corresponding $q$-deformed wedge module.
We also introduce a certain completion of $\VV^{\otimes n}$ in which the canonical basis live.

In Section~\ref{sec:CB}, we construct the canonical basis on a tensor module, which is a natural infinite-rank extension
of Lusztig's construction \cite{Lu}, and then the canonical basis on the $q$-wedge module.
This $q$-wedge space arises as a limiting case of the constructions in \cite{JMO}. This is done similarly as in \cite{Br2} (see also \cite{CLW}).

In Section~\ref{sec:CBabc}, we compare the type $C$ canonical basis on $\VV^{\otimes n}$ and the type $A$ canonical basis on the same space
(viewed as a module over the type $A$ subalgebra $\UU^{\mf a}$ of $\UU$). We show that
these two types of canonical bases coincide on one particular subspace of $\VV^{\otimes n}$, which plays a crucial role in the next section.

The representation theory of $\qn$ is studied in Section~\ref{sec:O}. We establish a character formula for the finite-dimensional irreducible $\qn$-modules of half-integer weights.
We then formulate in Conjecture~ \ref{conj:half-integer2} a
character formula for $\mc O_{n,\hZ}$ in terms of Webster's type $C$ can$\oplus$nical basis on $\VV^{\otimes n}$. Finally, in Conjecture \ref{conj:KL conj for s-weights} we formulate a character formula for $\mc O_{n,s^l}$ in terms of type $A$ (Lusztig's) canonical basis on $\VV^{\otimes l}_s \otimes {\VV^*_s}^{\otimes n-l}$.

In Appendix \ref{sec:notations for B}, we revisit the setting of \cite{Br2} for the category $\mc O_{n,\Z}$,
and suggest a modification of Brundan's original conjecture in Remark~\ref{rem:remark on conj for integral weights}.
We establish in Proposition \ref{thm:ACB=BCB} an equality of certain
canonical bases of types $B$ and $A$.\vskip 3mm

\vspace{.2cm}  \noindent {\bf Acknowledgements.}
We thank Shunsuke Tsuchioka for showing us his computation on type $B/C$ canonical bases,
which has led us to revise our ``type $C$" KL conjecture as well as Brundan's ``type $B$" KL conjecture.
The last two authors thank Institute of Mathematics, Academia Sinica, Taipei, for hospitality and support. We also would like to thank the referees for careful reading and helpful comments.

\vspace{.2cm} \noindent {\bf Notation.} We denote by $\Z$, $\N$, and $\Z_+$  the sets of all, positive, and
non-negative integers, respectively.
We also denote by $\hZ=\hf+\Z$ and $\hZ_+$  the set of half-odd-integers and positive half-odd-integers, respectively.

\vspace{.2cm} \noindent {\bf Notes added.} A proof of a maximal parabolic version of Conjecture~\ref{conj:KL conj for s-weights} has appeared in \cite{CC}. The proofs of
Conjecture \ref{conj:half-integer2} and (the full version) Conjecture~\ref{conj:KL conj for s-weights} will appear in a forthcoming work by Brundan and Davidson \cite{BN} using the unicity of categorification.
Indeed they noticed independently the corrected formulation in Conjecture \ref{conj:half-integer2} (upon knowing Conjecture~ \ref{conj:half-integer} and \cite{Tsu})
and Brundan's original conjecture.

%%%%
%%%%
\section{The tensor and wedge modules of quantum groups of type $C$}
\label{sec:prelim}

\subsection{The quantum group $\UU_q(\mf c_\infty)$}
Let $P$ be the free abelian group on basis $\delta_r$ ($r\in\hZ_+$) equipped  with a symmetric bilinear form $(\ep_r,\ep_s)=\delta_{rs}$, for $r, s\in \hZ_{+}$.
For later use we set $\delta_{-r} =-\delta_r$, for $r \in \hZ_+$.
Let $\mf c_\infty$ be the Kac-Moody Lie algebra of type $C$ of infinite rank whose associated Dynkin diagram and simple roots are given by

\begin{center}
\hskip -1cm  \setlength{\unitlength}{0.16in}
\begin{picture}(24,4)
%\put(2,2){\makebox(0,0)[c]{${\mf c}_{\infty}\ :$}}

\put(5.6,2){\makebox(0,0)[c]{$\bigcirc$}}
\put(8,2){\makebox(0,0)[c]{$\bigcirc$}}
\put(10.4,2){\makebox(0,0)[c]{$\bigcirc$}}
\put(14.85,2){\makebox(0,0)[c]{$\bigcirc$}}
\put(17.25,2){\makebox(0,0)[c]{$\bigcirc$}}
\put(19.4,2){\makebox(0,0)[c]{$\bigcirc$}}
\put(8.35,2){\line(1,0){1.5}} \put(10.82,2){\line(1,0){0.8}}
\put(13.2,2){\line(1,0){1.2}} \put(15.28,2){\line(1,0){1.45}}
\put(17.7,2){\line(1,0){1.25}} \put(19.81,2){\line(1,0){0.9}}
\put(6.8,1.97){\makebox(0,0)[c]{$\Longrightarrow$}}
\put(12.5,1.95){\makebox(0,0)[c]{$\cdots$}}
\put(21.5,1.95){\makebox(0,0)[c]{$\cdots$}}
\put(5.6,1){\makebox(0,0)[c]{\tiny ${\alpha}_0$}}
\put(8,1){\makebox(0,0)[c]{\tiny ${\alpha}_1$}}
\put(10.4,1){\makebox(0,0)[c]{\tiny ${\alpha}_2$}}
\put(15,1){\makebox(0,0)[c]{\tiny ${\alpha}_{k-1}$}}
\put(17.4,1){\makebox(0,0)[c]{\tiny ${\alpha}_k$}}
\put(19.8,1){\makebox(0,0)[c]{\tiny ${\alpha}_{k+1}$}}
\end{picture}
\end{center} \vskip -5mm
\begin{equation*}
\begin{split}
& \alpha_0=-2\delta_{\hf} , \ \alpha_i=\delta_{i-\hf}-\delta_{i+\hf} \quad (i\geq 1).
\end{split}
\end{equation*}
Let $\left(a_{ij}\right)$ be the Cartan matrix of $\mf c_\infty$, where $a_{ij}=2(\alpha_i,\alpha_j)/(\alpha_i,\alpha_i)$, for $i,j\geq 0$.
Let $Q$ be the root lattice of $\mf c_\infty$ and $Q^+=\sum_{i\geq 0}\Z_+\alpha_i$. We define the usual dominance ordering $\ge$ on $P$ with respect to $\mf c_\infty$:
\begin{equation}
\label{eq:geq}
\text{We  let $f\ge g$ if $f-g\in Q^+$, for $f,g\in P$.}
\end{equation}

Let $q$ be an indeterminate. We put $q_i=q^{\frac{(\alpha_i,\alpha_i)}{2}}$, $[m]_i=\frac{q_i^m-q_i^{-m}}{q_i-q_i^{-1}}$, and $[m]_i!=[1]_i[2]_i\cdots [m]_i$, for $i\geq 0$ and $m\in\N$.
Let $\UU=\UU_q(\mf c_\infty)$ denote the quantum group associated to $\mf c_\infty$, which is an associative algebra with $1$ over $\mathbb Q(q)$ with generators $K_i,E_i,F_i$ ($i\ge 0$) subject to the relations:
\begin{equation}\label{eq:defining relations for QG}
\begin{split}
&K_iK_i^{-1}=1,\quad K_iK_j=K_jK_i,\\
&K_iE_jK_i^{-1}=q^{(\alpha_i,\alpha_j)}E_j,\quad K_iF_jK_i^{-1}=q^{-(\alpha_i,\alpha_j)}F_j,\\
&E_iF_j-F_jE_i=\delta_{ij}\frac{K_i-K_i^{-1}}{q_i-q_i^{-1}},\\
&\sum_{m=0}^{1-a_{ij}}(-1)^m E_i^{(m)}E_j E_i^{(1-a_{ij}-m)}=\sum_{m=0}^{1-a_{ij}}F_i^{(m)}F_j F_i^{(1-a_{ij}-m)}=0\quad (i\neq j),
\end{split}
\end{equation}
for $i,j\geq 0$, where $E_i^{(m)}=E_i^m/[m]_i!$ and $F_i^{(m)}=F_i^m/[m]_i!$.

Let $\UU^\pm$ be the subalgebra of $\UU$ generated by $E_i$ $(i\geq 0)$ and $F_i$ $(i\geq 0)$, respectively, and let $\UU^0$ be the subalgebra generated by $K_i$  $(i\geq 0)$.
For $\nu\in Q^+$, let $\UU^\pm_{\nu}$ be the subspace of $\UU^\pm$ spanned by $E_{i_1}\cdots E_{i_r}$ (respectively, $F_{i_1}\cdots F_{i_r}$) such that $\alpha_{i_1}+\ldots+\alpha_{i_r}=\nu$. Then $\UU^\pm=\bigoplus_{\nu\in Q^+}\UU^\pm_{\nu}$, and $\UU \cong \UU^+\otimes \UU^0\otimes \UU^-$ as a $\Q(q)$-space.
Let $\UU_{\Z[q^{\pm}]}$ be the $\Z[q,q^{-1}]$-subalgebra of $\UU$ generated by $E_i^{(r)}$, $F_i^{(r)}$, $K_i^{\pm1}$, and
\begin{equation*}
\left[ \begin{array}{c} K_i \\ r \end{array} \right]:= \prod_{s=1}^r\frac{K_iq_i^{1-s} - K^{-1}_iq_i^{s-1}}{q_i^s-q_i^{-s}},
\end{equation*}
for $i\geq 0$ and $r\geq 1$.
Recall that
$\UU$ is a Hopf algebra with a comultiplication $\Delta$ (different from the one used in \cite{Lu}) given by
\begin{equation}\label{comul}
\begin{split}
&\Delta(E_i)=1\otimes E_i+E_i\otimes K_i^{-1},\\
&\Delta(F_i)=K_i\otimes F_i+F_i\otimes 1,\\
&\Delta(K_i)=K_i\otimes K_i,
\end{split}
\end{equation}
for $i\ge 0$.
Let $\overline{\phantom{x}}$ be the $\Q(q)$-antilinear involution of $\UU$ given by $\ov{q}=q^{-1}$,
$\ov{E_i}=E_i$, $\ov{F_i}=F_i$, and $\ov{K_i}=K^{-1}_i$ ($i\geq 0$), and let $\sigma$ be the $\Q(q)$-linear anti-involution of $\UU$ given by $\sigma(E_i)=E_i$, $\sigma(F_i)=F_i$, and $\sigma(K_i)=K_i^{-1}$ ($i\geq 0$). %The bar involution on $\UU\otimes \UU$ is defined by $\ov{u\otimes v}=\ov{u}\otimes \ov{v}$ for $u, v\in \UU$.

%%%%
\subsection{Bruhat orderings}
\label{subsec:bruhat ordering}

Recall $\hZ =\hf+\Z$.
We use the following notations for various sets of weights (suppressing the dependence on $n>0$):
\begin{equation}\label{eq:Lambda over half integer}
\begin{split}
&\La_{\hZ}:=\{\,\la=(\la_1,\ldots,\la_n)\,|\,\la_i\in\hZ,\  \forall i \,\},\\
&\La^{\geqslant}_{\hZ}:=\{\,\la \in \La_{\hZ}\,|\,\la_1\ge\la_2\ge\cdots\ge\la_n\,\},\\
&\La^{+}_{\hZ}:=\{\,\la \in\La_{\hZ}\,|\,\la_1>\la_2>\cdots>\la_n\,\}.
\end{split}
\end{equation}

For $\la=(\la_1,\ldots,\la_n)\in \Lambda_{\hZ}$ and $1\leq r\leq n$, we let
\begin{align}\label{eq:wt}
{\rm wt}_r(\la):=\sum_{i=r}^n\delta_{\la_i} \in P, \quad {\rm wt}(\la):={\rm wt}_1(\la)\in P.
\end{align}
%We write $\texttt{wt}(\la)=\texttt{wt}_1(\la)$.
Recall from  \eqref{eq:geq} the partial ordering $\ge$ on the lattice $P$.
For $\la,\mu\in\La_{\hZ}$, we define $\la\succeq\mu$ if ${\rm wt}_r(\la)\ge{\rm wt}_r(\mu)$, for $1\leq r\leq n$ and ${\rm wt}_1(\la)={\rm wt}_1(\mu)$. We call $\succeq$ the {\em Bruhat ordering on $\La_{\hZ}$}. Given $\la, \mu\in\Lambda_\hZ$ with $\la\succeq \mu$, there are only finitely many $\nu\in \Lambda_\hZ$ such that $\la\succeq\nu\succeq\mu$, which can be seen by using an analogue of \cite[Lemma 2.15]{Br2} for $\Lambda_\hZ$.

Let $\{\,\varepsilon_i\,|\,1\leq i\leq n\,\}$ be the standard orthonormal basis of $\Q^n$, and we regard $\La^{\geqslant}_{\hZ} \subset \Q^n$. For $\la, \mu\in \La^\geqslant_{\hZ}$, we define another { Bruhat ordering} $\succcurlyeq$ as in \cite[Lemma 2.1]{PS2}: $\la\succcurlyeq \mu$ if and only if there exists a sequence of elements $\mu=\nu_{(1)},\ldots,\nu_{(k)}=\la$ in $\La^{\geqslant}_{\hZ}$ and roots $\varepsilon_{s_i}-\varepsilon_{t_i}$ with $s_i<t_i$ such that $\nu_{(i)}+(\varepsilon_{s_i}-\varepsilon_{t_i})=\nu_{(i+1)}$ and $(\nu_{(i)},\varepsilon_{s_i}+\varepsilon_{t_i})=0$ for $1\leq i\leq k-1$. Then by the same argument as in \cite[Proposition 3.3]{CK}, we can prove the following:

\begin{prop}\label{prop:Bruhat PS=Br}
For $\la, \mu\in \La^{\geqslant}_{\hZ}$, we have $\la\succeq \mu$ if and only if $\la\succcurlyeq \mu$.
\end{prop}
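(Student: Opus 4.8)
The plan is to prove the two implications separately, along the lines of \cite[Proposition 3.3]{CK}. The implication $\la\succcurlyeq\mu\Rightarrow\la\succeq\mu$ is a direct verification, which I would carry out by reducing to a single elementary step: it is enough to show that if $\nu,\nu'\in\La^\geqslant_\hZ$ with $\nu'=\nu+(\varepsilon_s-\varepsilon_t)$, $s<t$, and $(\nu,\varepsilon_s+\varepsilon_t)=0$, then $\nu'\succeq\nu$, since $\ge$ is transitive and $Q^+$ is closed under addition. The condition $(\nu,\varepsilon_s+\varepsilon_t)=0$ reads $\nu_s+\nu_t=0$, which together with $\nu_s\ge\nu_t$ forces $\nu_s=-\nu_t=a$ for some $a\in\hZ_+$; in particular $a\ne 0$. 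Evaluating ${\rm wt}_r(\nu')-{\rm wt}_r(\nu)=\sum_{i=r}^n(\delta_{\nu'_i}-\delta_{\nu_i})$, one finds it is $0$ for $r\le s$ and for $r>t$, and equals $\delta_a-\delta_{a+1}=\alpha_{a+\hf}\in Q^+\setminus\{0\}$ for $s<r\le t$. Hence ${\rm wt}_1$ is unchanged and ${\rm wt}_r(\nu')\ge{\rm wt}_r(\nu)$ for every $r$, so $\nu'\succeq\nu$ (indeed $\nu'\succ\nu$).

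For the converse, I would set $N(\la,\mu):=\sum_{r=1}^n{\rm ht}\big({\rm wt}_r(\la)-{\rm wt}_r(\mu)\big)$ for $\la\succeq\mu$, where ${\rm ht}$ is the height function on $Q^+$; peeling off ${\rm wt}_n,{\rm wt}_{n-1},\dots$ and using linear independence of the $\delta_r$ ($r\in\hZ_+$) shows $N(\la,\mu)=0$ if and only if $\la=\mu$, so $N$ is a valid induction variable. The heart of the argument is the claim that whenever $\la\succeq\mu$ with $\la\ne\mu$, there exist indices $s<t$ with $\mu_s+\mu_t=0$ such that $\mu':=\mu+(\varepsilon_s-\varepsilon_t)$ again lies in $\La^\geqslant_\hZ$ and still satisfies $\mu'\preceq\la$. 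Granting this, $\mu'$ is obtained from $\mu$ by a legal elementary $\succcurlyeq$-step; the first paragraph gives $N(\la,\mu')=N(\la,\mu)-(t-s)<N(\la,\mu)$, so induction yields $\la\succcurlyeq\mu'$, and prepending the step from $\mu$ to $\mu'$ gives $\la\succcurlyeq\mu$.

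To establish the claim I would follow \cite[Proposition 3.3]{CK}: let $c$ be minimal with ${\rm wt}_c(\la)\ne{\rm wt}_c(\mu)$, so $c\ge 2$ since ${\rm wt}_1(\la)={\rm wt}_1(\mu)$; subtracting ${\rm wt}_{c-1}(\mu)=\delta_{\mu_{c-1}}+{\rm wt}_c(\mu)$ from ${\rm wt}_{c-1}(\la)=\delta_{\la_{c-1}}+{\rm wt}_c(\la)$ and using ${\rm wt}_{c-1}(\la)={\rm wt}_{c-1}(\mu)$ gives $\delta_{\mu_{c-1}}-\delta_{\la_{c-1}}={\rm wt}_c(\la)-{\rm wt}_c(\mu)\in Q^+\setminus\{0\}$. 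From the explicit description of $\ge$ on $P$ in type $C$ one then extracts a suitable pair $s<t$ with $\mu_s+\mu_t=0$ for which adding $\varepsilon_s-\varepsilon_t$ to $\mu$ keeps the sequence weakly decreasing (so the result lies in $\La^\geqslant_\hZ$) and preserves all the inequalities ${\rm wt}_r(\mu')\le{\rm wt}_r(\la)$; the latter check uses $\la\succeq\mu$ coordinatewise together with the fact that an elementary step alters the relevant ${\rm wt}_r$'s by a single simple root. I expect this last step --- exhibiting a \emph{legal} elementary move that stays inside the interval $\{\nu:\mu\preceq\nu\preceq\la\}$ --- to be the main obstacle: it is the only point where the combinatorics of the type $C$ root lattice is genuinely needed, and it is also where half-integrality helps, since it renders the degenerate configuration $\mu_s=0$ (and the $\alpha_0$-bookkeeping accompanying it in the integral-weight case of \cite{Br2}) vacuous. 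The remaining verifications are routine.
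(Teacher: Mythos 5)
Your proposal is correct in outline and matches the paper's approach, which is itself just a one-line citation to \cite[Proposition 3.3]{CK} ("by the same argument"). Your first paragraph makes the forward implication $\succcurlyeq\Rightarrow\succeq$ precise and correct: the computation of ${\rm wt}_r(\nu')-{\rm wt}_r(\nu)$, showing it vanishes for $r\le s$ and $r>t$ and equals the simple root $\alpha_{a+\hf}$ for $s<r\le t$, is exactly the content of the elementary step; you are also right that half-integrality makes the $\nu_s=0$ / $\alpha_0$ bookkeeping from the integral case disappear. Your induction variable $N(\la,\mu)=\sum_r{\rm ht}({\rm wt}_r(\la)-{\rm wt}_r(\mu))$, the peeling argument showing $N=0\Leftrightarrow\la=\mu$, and the drop $N(\la,\mu)-N(\la,\mu')=t-s$ are all correct. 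For the converse, you deduce $\delta_{\mu_{c-1}}-\delta_{\la_{c-1}}={\rm wt}_c(\la)-{\rm wt}_c(\mu)\in Q^+\setminus\{0\}$ at the first index of disagreement, which is the right starting point; the remaining combinatorial extraction of a legal pair $(s,t)$ staying in the interval $[\mu,\la]$ is indeed the nontrivial step, and you correctly flag it. The paper provides no more detail than you do here; both defer this final extraction to \cite[Proposition 3.3]{CK}, so you have captured the proof at the same level of granularity as the source and have usefully made explicit the two bookkeeping facts (the elementary-step weight change, and the height induction) that the paper leaves implicit.
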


%\red{The proposition is not valid for $\La_{\hZ}$?}
%{\color{orange}It is expected that a similar version of the proposition should hold for $\La_{\hZ}$ and $\La_\Z$. But even for $\La_\Z$ Brundan did not prove it \cite[Remark 2.6]{Br1}.}

%%%%%
\subsection{The $q$-tensor and $q$-wedge spaces}
 \label{subsec:completion}

Let
\begin{equation}
 \label{eq:V}
\VV :=\bigoplus_{r\in\hZ}\Q(q) v_r
\end{equation}
be the natural representation of $\UU$, where the action of $\UU$ is given by
\begin{equation*}\label{eq:natural_rep_C}
\begin{split}
&E_0v_r=\delta_{r,\hf}v_{-\hf},\quad \quad\quad\quad\ \ E_iv_r=\delta_{r,i+\hf}v_{i-\hf}+\delta_{r,-i+\hf}v_{-i-\hf},\\
&F_0v_r=\delta_{r,-\hf}v_\hf,\quad \quad\quad\quad\ \ F_iv_r=\delta_{r,i-\hf}v_{i+\hf}+\delta_{r,-i-\hf}v_{-i+\hf},\\
&K_0v_r=q^{-2\delta_{r,\hf}+2\delta_{r,-\hf}}v_r,\quad K_iv_r=q^{\delta_{r,i-\hf}+\delta_{r,-i-\hf}-\delta_{r,-i+\hf}-\delta_{r,i+\hf}}v_r,
\end{split}
\end{equation*}
for $i\geq 1$ and $r\in \hZ$ (cf.~\cite[Chapter 5A]{Jan}).
We put
\begin{equation}
\label{eq:TT}
\TT:=\bigoplus_{n\geq 0}\TT^n, \quad \text{where}\quad \TT^n:=\VV^{\otimes n},
\end{equation}
which is a $\UU$-module  via \eqref{comul}.
Let $\KK$ be the two-sided (homogeneous) ideal of $\TT$ generated by
\begin{equation}\label{eq:wedge_rel_C}
\begin{split}&v_r\otimes v_r,\quad v_r\otimes v_s +q v_s\otimes v_r \quad (r>s,\ r+s\not=0),\\
&v_r\otimes v_{-r}+q(v_{r-1}\otimes v_{1-r}+v_{1-r}\otimes v_{r-1})+q^2v_{-r}\otimes v_r \quad (r>1/2),\\
&v_\hf\otimes v_{-\hf}+q^2v_{-\hf}\otimes v_\hf.
\end{split}
\end{equation}
We have $\KK=\bigoplus_{n\geq 0}\KK^n$, where $\KK^n =\KK \cap \TT^n$.
The quotient $\FF=\TT / \KK$ is called the $q$-wedge space of type $C$, and
it is essentially a limit case of \cite[Proposition 2.3]{JMO}. Let $\pi : \TT \rightarrow \FF$ be the canonical projection. We have
\begin{equation*}
\FF=\bigoplus_{n\geq 0}\FF^n,\quad \text{where}\quad \FF^n := \TT^n/\KK^n.
\end{equation*}
One sees that $\KK^n$ is a $\UU$-submodule of $\TT^n$, and hence $\FF^n$ is a $\UU$-module.

Fix $n\geq 1$.  We set
\begin{align*}
M_\la &=v_{\la_1}\otimes \cdots \otimes v_{\la_n}, \quad \text{ for } \la\in \Lambda_\hZ,
\\
F_\la &=\pi(M_{w_0\la}), \quad\quad\quad\ \ \text{ for } \la\in \Lambda_\hZ^+,
\end{align*}
where $w_0$ is the longest element in the symmetric group $\mf S_n$ acting on $\La_{\hZ}$ as permutations. The set $\{\,M_\la\,|\,\la\in \Lambda_\hZ\,\}$ forms a $\Q(q)$-basis of $\TT^n$. We define a symmetric bilinear form $(\ ,\ )$ on $\TT^n$ by
\begin{equation}\label{eq:bilinear form on Tn}
(M_\la,M_\mu)=\delta_{\la \mu},
\end{equation}
for $\la,\mu\in \Lambda_\hZ$.
In general, if $\la\in\La_\hZ\setminus \La^+_\hZ$, then $\pi(M_{w_0\la})$ is a $q\Z[q]$-linear combination of $F_\mu$'s with $\mu\succ \la$ by \eqref{eq:wedge_rel_C} (cf.~\cite[Lemma 3.2]{Br2}). Also note that we have $\mu\succ \la$ if and only if $w_0\mu\prec w_0\la$ for $\la, \mu\in \La_\hZ$. The set $\{\,F_\la\,|\,\la\in \La^{+}_{\hZ}\,\}$ forms a $\Q(q)$-basis of $\FF^n$, which is a direct consequence of \cite[Theorem 1.2]{Bg}.

For $k\in \N$, let $\TT^n_{< k}$ be the subspace of $\TT^n$ spanned by $M_\la$, for $\la=(\la_1,\ldots,\la_n)\in\La_{\hZ}$ with $|\la_i|< k$ for $1\leq i\leq n$. We take the completion $\widetilde{\TT}^n$ of $\TT^n$ with respect to the descending filtration $\{{\rm Ker}\pi_k \}_{k\in\N}$, where
\begin{equation}\label{eq:projection pi_k}
\pi_k: \TT^n \rightarrow \TT^n_{< k}
\end{equation}
is the canonical projection.
The set $\{\,M_\la\,|\,\la\in \La_{\hZ}\,\}$ forms a topological $\Q(q)$-basis of $\widetilde{\TT}^n$.
Then we define $\widehat{\TT}^n$ to be the subspace of $\widetilde{\TT}^n$ spanned by vectors of the form
$M_\la+\sum_{\mu\prec \la}c_\mu M_\mu,$
for $\la\in\La_{\hZ}$ with $c_\mu\in \Q(q)$, which is possibly an infinite sum.  %Since there are only finitely many $\mu$'s such that $\mu\prec\la$ and $M_\mu\not\in {\rm Ker}\pi_k$ for fixed $\la$ and $k$ (cf.~\cite[Lemma 2.15]{Br2}), the vector in \eqref{eq:vector in B-completion} is a well-defined element in $\widetilde{\TT}^n$. %Note that $\widehat{\TT}^n$ is a $\UU$-module where the action of $\UU$ on $\TT^n$ is extended to $\widehat{\TT}^n$ by continuity.

Let $\widetilde{\KK}^n$ be the completion of $\KK^n$ in $\widetilde{\TT}^n$, and $\widehat{\KK}^n=\widetilde{\KK}^n\cap \widehat{\TT}^n$
the closure of $\KK^n$ in $\widehat{\TT}^n$. Then we define
\begin{equation*}\label{eq:completion of Fn}
\widehat{\FF}^n=\widehat{\TT}^n/\widehat{\KK}^n.
\end{equation*}
We see that $\widehat{\FF}^n$ is spanned by the vectors of the form
$F_\la+\sum_{\mu\succ\la}c_\mu F_\mu,$
for $\la\in\La^+_\hZ$ with $c_\mu\in \Q(q)$, which is also possibly an infinite sum (cf.~\cite[Lemma~ 3.2]{Br2}). %Since  $\widehat{\KK}^n$ is invariant under the the action of $\UU$, $\widehat{\FF}^n$ is also a $\UU$-module.

\begin{rem}{\rm
We can consider the completions of $\TT^n$ and $\FF^n$ as in \cite{Br2} with respect to the grading on $\La_\hZ$ given by $\sum_{d=1}^n d\la_d$, which indeed properly contain $\widehat{\TT}^n$ and $\widehat{\FF}^n$, respectively (see Remark \ref{rem:completions}).
We remark that all the results on (dual) canonical bases, which will be given in later sections, remain unchanged if we choose to work with these completions instead.
}
\end{rem}

%%%%
%%%%
\section{Canonical basis on the tensor module and wedge module}
\label{sec:CB}

%%%%
\subsection{Quasi-$\mc R$-matrix}
\label{subsec:PBW and quasi-R}

For $k\geq 1$, let $\mf c_k$ be the subalgebra of $\mf c_\infty$ corresponding to $\{\,\alpha_i\,|\,0\leq i\leq k-1\,\}$ with positive root lattice $Q^+_k=\sum_{0\leq i\leq k-1}\Z_+\alpha_i$.
We regard the quantum group $\UU_q(\mf{c}_k)$ as the subalgebra generated by $K_i$, $E_i$, and $F_i$, for $0\leq i\leq k-1$.

Put $p=\frac{k(k+1)}{2}$ and $N=k^2$.
Let ${\bf h}=(i_1,i_2,\ldots,i_{N})$ be the sequence given by
\begin{equation*}
\begin{split}
&(i_1,\ldots,i_{p})=(0,1,\ldots,k-1,0,1,\ldots,k-2,\ldots,0,1,0), \\
&(i_{p+1},\ldots,i_{N})=(1,2,1,3,2,1,\ldots,k-1,\ldots,1).
\end{split}
\end{equation*}
Note that $w^{(k)}_0=s_{i_1}\ldots s_{i_N}$
is the longest element in the Weyl group of $\mf{c}_k$, which is generated by the simple reflections $s_i$, for $0\leq i\leq k-1$.
For ${\bf c}=(c_1,\ldots,c_N)\in\Z_+^N$, consider the vectors of the following form:
%\begin{equation}\label{eq:PBW vector}
%\begin{split}
%E^{\bf c}_{{\bf h},p}=&
%E^{(c_{p+1})}_{i_{p+1}} T'_{i_{p+1},1}(E^{(c_{p+2})}_{i_{p+2}})\cdots
%T'_{i_{p+1},1}T'_{i_{p+2},1}\cdots T'_{i_{n-1},1}(E^{(c_{n})}_{i_{n}})\\
%& \times
%T''_{i_{p},-1}T''_{i_{p-1},-1}\cdots T''_{i_{2},-1}(E^{(c_{1})}_{i_{1}})\cdots
%T''_{i_{p},-1}(E^{(c_{p-1})}_{i_{p-1}})E^{(c_{p})}_{i_{p}},
%\end{split}
%\end{equation}
\begin{equation}\label{eq:PBW vector}
\begin{split}
E^{\bf c}_{{\bf h},p}=&
E^{(c_{p})}_{i_{p}}T'_{i_{p},-1}(E^{(c_{p-1})}_{i_{p-1}})\cdots T'_{i_{p},-1}T'_{i_{p-1},-1}\cdots T'_{i_{2},-1}(E^{(c_{1})}_{i_{1}})
\\
& \times
T''_{i_{p+1},1}T''_{i_{p+2},1}\cdots T''_{i_{N-1},1}(E^{(c_{N})}_{i_{N}}) \cdots T''_{i_{p+1},1}(E^{(c_{p+2})}_{i_{p+2}})E^{(c_{p+1})}_{i_{p+1}},
\end{split}
\end{equation}
(see \cite[Section 38.2.2]{Lu}), which has been adjusted according to our different convention of comultiplication \eqref{comul}.
 Here $T'_{i,e}$, $T''_{i,e}$ ($e=\pm 1$) are the automorphisms of $\UU$ given in \cite[Section 37.1]{Lu}.

By \cite[Proposition 38.2.3]{Lu}, the set ${\bf B}^{(k)}:=\{\,E^{\bf c}_{{\bf h},p}\,|\,{\bf c}\in\Z_+^N\,\}$ is a $\Q(q)$-basis of $\UU^+\cap\UU_q(\mf c_k)$
and ${\bf B}^{(k)}_\nu:={\bf B}^{(k)}\cap \UU^+_\nu$   is a $\Q(q)$-basis of $\UU^+_\nu\cap\UU_q(\mf c_k)$ for $\nu\in Q^+_k$.
Since
\begin{equation*}
w^{(k+1)}_0=w'w^{(k)}_0w'',
\end{equation*}
with $w'=s_0s_1\ldots s_{k}$ and $w''=s_{k}\ldots s_1$, we have ${\bf B}^{(k)}\subset {\bf B}^{(k+1)}$. Therefore ${\bf B}=\bigcup_{k\geq 2}{\bf B}^{(k)}$ is a $\Q(q)$-basis of $\UU^+$, and ${\bf B}=\bigsqcup_{\nu\in Q^+}{\bf B}_\nu$, where ${\bf B}_\nu:={\bf B}\cap \UU^+_\nu$. Note that ${\bf B}_\nu={\bf B}^{(k)}_\nu$ if $\nu\in Q^+_k$ for some $k\in\N$.

Let
\begin{equation}\label{eq:Theta^k}
\Theta^{(k)}:=\sum_{\nu\in Q^+_k}\Theta_\nu
\end{equation}
be the quasi-$\mc R$-matrix for $\UU_q(\mf c_k)$, which is defined in a certain completion of $\UU_q(\mf c_k)^+ \otimes \UU_q(\mf c_k)^-$ \cite[Section 4.1]{Lu} with $\UU_q(\mf c_k)^\pm=\UU^\pm\cap\UU_q(\mf c_k)$.
For $\nu=\sum_i\nu_i\alpha_i\in Q^+_k$, the $\nu$-component $\Theta_\nu$ can be given by
\begin{equation}\label{eq:Theta_nu}
\Theta_\nu=\prod_{i=0}^{k-1} (-q_i)^{-\nu_i}
\sum_{E^{{\bf c}}_{{\bf h},p}\in {\bf B}^{(k)}_\nu}
\prod_{s=1}^N\prod_{t=1}^{c_s}(1-q_{i_s}^{2t})
E^{{\bf c}}_{{\bf h},p}\otimes F^{{\bf c}}_{{\bf h},p},
\end{equation}
where $F^{{\bf c}}_{{\bf h},p}$ is obtained by replacing $E_i$'s with $F_i$'s in the expansion of $E^{{\bf c}}_{{\bf h},p}$ \cite[Theorem 4.1.2]{Lu}.

Let $\mf{a}_{\infty}$ be the subalgebra of $\mf c_\infty$ corresponding to $\{\,\alpha_i\,|\,i\geq 1\,\}$ with positive root lattice $Q_{\mf a}^+=\sum_{i\geq 1}\Z_+\alpha_i$, and let $\UU^{\mf a}:=\UU_q(\mf{a}_{\infty})$ be the associated quantum group. We shall regard $\UU^{\mf a}$ as the $\Q(q)$-subalgebra of $\UU$ generated by $E_i,F_i$ for $i\ge 1$.
The quasi-$\mc R$-matrix for $\UU^{\mf a}\cap \UU_q(\mf c_k)= \UU_q(\mf{sl}(k))$ is given by
\begin{equation}\label{eq:Theta_J}
\Theta_{\mf a}^{(k)}:= \sum_{\nu\in Q^+_{\mf a}\cap Q^+_k}\Theta_\nu,
\end{equation}
where $\Theta_\nu$ is given in \eqref{eq:Theta_nu}; in this case $\Theta_\nu$ is the sum over
$E^{\bf c}_{{\bf h},p}\in {\bf B}^{(k)}_\nu$ with $c_i=0$ for $i\le p$ since $w^{(k)}_{\mf a}=s_{i_{p+1}}\ldots s_{i_n}$ is the longest element in the Weyl group of $\mf{sl}(k)$.

\subsection{Canonical basis of $\widehat{\TT}^n$}\label{subsec:canonical basis of Tn}

We shall introduce canonical and dual canonical bases of $\widehat{\TT}^n$ and formulate their basic properties
 similar to the ones in the case of type $B$ \cite[Sections 2.4 and 2.5]{Br2}  (see also Appendix \ref{sec:notations for B}).

For $k\geq 1$, let $\psi^{(k)}$ denote the involution on the $\UU_q(\mf c_k)$-module $\TT^{n}_{< k}$ defined with respect to the quasi-$\mc R$-matrix $\Theta^{(k)}$ in \eqref{eq:Theta^k} (see \cite[Section 27.3.1]{Lu}). Let $\la\in \La_\hZ$ be given such that $|\la_i|<k$, for $1\leq i\leq n$. One can show as in \cite{CLW} that $\psi^{(k)}(M_\la)=\pi_k(\psi^{(\ell)}(M_\la))$ for all $\ell\ge k$, where $\pi_k$ is as in \eqref{eq:projection pi_k}, and hence there exists a well-defined limit $\psi(M_\la)=\lim_{k\rightarrow \infty}\psi^{(k)}(M_\la)$ in $\widehat{\TT}^n$ such that
\begin{equation}\label{eq:bar on M}
\psi(M_\la)\in M_\la + \sum_{\mu\prec \la} \Z[q,q^{-1}]M_\mu,
\end{equation}
which is possibly an infinite sum. Furthermore, $\psi$ extends to a bar involution,
denoted by $\overline{\phantom{x}}$,  on $\widehat{\TT}^n$, which is compatible with the bar involution $\overline{\phantom{x}}$ of $\UU$.

By \cite[Lemma 24.2.1]{Lu} (cf.~\cite[Lemma 3.8]{CLW}), there exist unique bar-invariant topological $\Q(q)$-bases $\{\,T_\la\,|\,\la\in\La_{\hZ}\,\}$ and $\{\,L_\la\,|\,\la\in\La_{\hZ}\,\}$ of $\widehat{\TT}^n$ such that
\begin{equation}\label{can:dual:can}
\begin{split}
T_\la=M_\la+\sum_{\mu\prec\la}t_{\mu\la}(q)M_\mu,\quad
L_\la=M_\la+\sum_{\mu\prec\la}\ell_{\mu\la}(q)M_\mu,
\end{split}
\end{equation}
where $t_{\mu\la}(q)\in q\Z[q]$ and $\ell_{\mu\la}(q)\in q^{-1}\Z[q^{-1}]$. We
set $t_{\la\la}(q)=\ell_{\la\la}(q)=1$, and $t_{\mu\la}(q)=\ell_{\mu\la}(q)=0$ when $\mu\not \preceq\la$.
We call $\{\,T_\la\,|\,\la\in\La_{\hZ}\,\}$ and $\{\,L_\la\,|\,\la\in\La_{\hZ}\,\}$ the {\em canonical} and {\em dual canonical basis} of $\widehat{\TT}^n$, respectively.

Let $\sigma$ be the $\Q(q)$-antilinear automorphism of $\TT^n$ given by $\sigma(M_\la)=M_{-\la}$, for $\la\in \La_{\hZ}$.
We define a bilinear form $\langle \cdot, \cdot \rangle$ on $\widehat{\TT}^n$ by
\begin{align}\label{bi:form}
\langle u,v\rangle:=\left(u,\sigma(\ov{v})\right), \quad \text{ for } u, v\in \widehat{\TT}^n.
\end{align}
Note that although $\sigma(\ov{v})\not\in\widehat{\TT}^n$, the form \eqref{bi:form} is nevertheless well-defined. Also \eqref{bi:form} is symmetric (cf.~\cite[Lemma 2.21]{Br2}), and furthermore,
similar to \cite[Lemma~ 2.25]{Br2}, $\{\,T_\la\,|\,\la\in\La_{\hZ}\,\}$ and $\{\,L_\la\,|\,\la\in\La_{\hZ}\,\}$ are dual in the sense that
\begin{equation}\label{eq:TL pairing}
\langle T_\la, L_{-\mu} \rangle =\delta_{\la\mu},\quad \text{ for } \la, \mu \in \La_{\hZ}.
\end{equation}

\begin{example}\label{ex:CB:n=2}
Let us compute the canonical basis elements $T_{(r,s)}$ ($r,s\in \hZ$) in $\widehat{\TT}^2$.
By \eqref{can:dual:can}, we first have
\begin{equation*}
T_{(r,s)}=M_{(r,s)}\quad (r\le s,r+s\not=0).
\end{equation*}
The other canonical basis elements $T_{(r,s)}$  can be computed by applying appropriate Chevalley generators $E_i$'s and $F_i$'s to $T_{(r,s)}=M_{(r,s)}$ with $r\le s$ and $r+s\neq 0$, and using the characterization of canonical basis elements:
\begin{equation*}
\begin{split}
%T_{(r,s)}&=M_{(r,s)}\quad (r\le s,r+s\not=0),\\
T_{(r,s)}&= M_{(r,s)}+qM_{(s,r)} \quad (r>s, r+s\not=0),\\
T_{(-r,r)}&= M_{(-r,r)}+qM_{(-r-1,r+1)} \quad (r\ge 1/2),\\
T_{(\hf,-\hf)}&= M_{(\hf,-\hf)}+q^2M_{(-\frac{1}{2},\frac{1}{2})},\\
T_{(r,-r)}&= M_{(r,-r)}+qM_{(r-1,-r+1)}+qM_{(-r+1,r-1)}+q^2M_{(-r,r)} \quad (r\ge 3/2).
\end{split}
\end{equation*}
From these formulas we can obtain explicit formulas for the bar involution on $\widehat{\TT}^2$:
\begin{equation*}
\begin{split}
\overline{M_{(r,s)}}&=M_{(r,s)}\quad (r\le s, r+s\not=0),\\
\overline{M_{(r,s)}}&=M_{(r,s)}+(q-q^{-1})M_{(s,r)}\quad (r> s, r+s\not=0),\\
\overline{M_{(-r,r)}}&=M_{(-r,r)} + \sum_{s>r}(q-q^{-1})(-q)^{r+1-s}M_{(-s,s)}\quad (r>0),\\
\overline{M_{(r,-r)}}&=M_{(r,-r)} + q(q-q^{-1})M_{(-r,r)}
+\sum_{0<s<r} (q-q^{-1})(-q)^{s+1-r}M_{(s,-s)}\\ &+ \sum_{s<0}q^{-1}(q-q^{-1})(-q)^{s+1-r}M_{(s,-s)} \quad (r>0).
\end{split}
\end{equation*}
%\begin{equation*}
%\begin{split}
%\overline{v_r\otimes v_s}&=v_r\otimes v_s\quad (r\le s, r+s\not=0),\\
%\overline{v_r\otimes v_s}&=v_r\otimes v_s+(q-q^{-1})v_s\otimes v_r\quad (r> s, r+s\not=0),\\
%\overline{v_{-r}\otimes v_r}&=v_{-r}\otimes v_r + \sum_{s>r}(q-q^{-1})(-q)^{r+1-s}v_{-s}\otimes v_s\quad (r>0),\\
%\overline{v_{r}\otimes v_{-r}}&=v_{r}\otimes v_{-r} + q(q-q^{-1})v_{-r}\otimes v_r
%+\sum_{0<s<r} (q-q^{-1})(-q)^{s+1-r}v_s\otimes v_{-s}\\ &+ \sum_{s<0}q^{-1}(q-q^{-1})(-q)^{s+1-r}v_s\otimes v_{-s} \quad (r>0).
%\end{split}
%\end{equation*}
\end{example}

One could develop the crystal structure for $\TT^n$ and an algorithm of computing the canonical basis $T_\la$ similar to \cite{Br2}.

%We have the following conjectures on (dual) canonical bases (compare Conjecture~ \ref{conj:half-integer}
%on BGG category of $\qn$ below).
%\begin{conj}
%\label{conj:positivity of T-1}
%For $\la, \mu\in \La_\hZ$, we have
%\begin{itemize}
%
%\item[(1)] $t_{\mu\la}(q), \ell_{\mu\la}(-q^{-1}) \in \Z_+ [q]$,
%
%\item[(2)] $T_\la$ is a finite linear combination of the $M_\mu$'s.
%\end{itemize}
%\end{conj}
%
%\begin{conj}
%The divided power $E_i^{(r)}$, for $i \ge 0$ and $r\ge 1$, when viewed as an endomorphism on $\widehat{\TT}^n$, has matrix entries in $\Z_{+} [q,q^{-1}]$
%with respect to the canonical basis $\{\,T_\la\,|\,\la\in\La_\hZ\,\}$.
%\end{conj}

\subsection{Canonical basis of $\widehat{\FF}^n$}\label{subsec:CB of Fock C}

We shall introduce canonical basis of $\widehat{\FF}^n$ and its dual basis (compare~\cite[Sections 3.2 and 3.3]{Br2}).

We first note that $\widehat{\KK}^n$ is invariant under the bar involution on  $\widehat{\TT}^n$ (cf.~\cite[Lemma 3.4]{Br2}), and hence there exists a bar involution $\overline{\phantom{x}}$ on $\widehat{\FF}^n$ induced from the one on $\widehat{\TT}^n$.
For $\la\in \La_{\hZ}^+$, we have by \eqref{eq:bar on M}
\begin{equation}\label{eq:bar on F_lambda}
\ov{F_\la} \in  F_\la + \sum_{\mu\succ \la} \Z[q,q^{-1}] F_\mu.
\end{equation}
Hence there exists a unique bar-invariant topological $\Q(q)$-basis $\{\,U_\la\,|\,\la\in\La_{\hZ}^+\,\}$ of $\widehat{\FF}^n$ such that
\begin{equation}\label{eq:CB on Fock}
U_\la=\sum_{\mu\in\La_{\hZ}^+}{u_{\mu\la}(q)}F_\mu,
\end{equation}
where $u_{\la\la}(q)=1$, $u_{\mu\la}(q)\in q\Z[q]$ for $\mu\succ\la$, and $u_{\mu\la}(q)=0$ otherwise.
We call $\{\,U_\la\,|\,\la\in\La_{\hZ}^+\,\}$ the {\em canonical basis} of $\widehat{\FF}^n$.
We can show as in \cite[Lemma 3.8]{Br2} that
\begin{align}\label{can:in:quot}
\pi(T_{w_0\la})=\begin{cases}
U_\la,& \text{ if }\la\in\La_{\hZ}^+,\\
0,& \text{ otherwise},
\end{cases}
\end{align}
which in particular implies that $\{\,T_{w_0\la}\,|\,\la\in \La_{\hZ}\setminus\La_{\hZ}^+\,\}$ is a topological $\Q(q)$-basis of $\widehat{\KK}^n$.

Now, we consider the $\Q(q)$-vector space
\begin{equation*}
\EE^n:=\bigoplus_{\la\in \La_{\hZ}^+}\Q(q)E_\la,
\end{equation*}
where
\begin{align}\label{formula:E}
E_\la:=\sum_{\mu\in\La_{\hZ}^+} u_{-w_0\la\,-w_0\mu}(q^{-1})L_\mu,\quad \text{ for $\la\in \La_{\hZ}^+$}.
\end{align}
Indeed, \eqref{formula:E} is a finite linear combination, and hence $\{\,L_\la\,|\,\la\in\La_{\hZ}^+\,\}$ is a $\Q(q)$-basis of $\EE^n$. Let $\widehat{\EE}^n$ be the closure of $\EE^n$ in $\widehat{\TT}^n$.
Since $\{\,T_{w_0\la}\,|\,\la\in\La_\hZ\setminus\La^+_\hZ\,\}$ is a topological $\Q(q)$-basis of $\widehat{\KK}^n$ by \eqref{can:in:quot}, it follows from \eqref{eq:TL pairing} that the bilinear form \eqref{bi:form} induces a pairing $\langle\cdot,\cdot \rangle$ between $\widehat{\EE}^n$ and $\widehat{\FF}^n$ such that
\begin{equation}\label{eq:dual pairing}
\langle L_{-w_0\la},U_\mu\rangle = \delta_{\la\mu}, \quad \text{ for } \la, \mu\in\La^+_\hZ.
\end{equation}
Then similar to \cite[Lemma 3.15]{Br2} we see that the pairing \eqref{eq:bilinear form on Tn} satisfies
\begin{equation}\label{eq:pairing E and M}
\left( E_{\la},M_\mu \right) = \delta_{\la\mu},   \quad \text{ for }\la, \mu\in\La^+_\hZ,
\end{equation}
Using this, we can prove as in \cite[Theorem 3.16]{Br2} that $\{\,L_\la\,|\,\la\in\La_{\hZ}^+\,\}$ is the unique bar-invariant topological $\Q(q)$-basis of $\widehat{\EE}^n$ such that ${L_\la}\in E_\la+\sum_{\mu\in\La^+_\hZ}q^{-1}\Z[q^{-1}]E_\mu$ (possibly an infinite sum). Moreover, we have
\begin{align}\label{formula:U}
L_\la=\sum_{\mu\in\La_{\hZ}^+}\ell_{\mu\la}(q)E_\mu, \quad \text{ for $\la\in \La_{\hZ}^+$},
\end{align}
where $\ell_{\mu\la}(q)$ are given in \eqref{can:dual:can}.

The following formula can be obtained by using \eqref{eq:pairing E and M} and an analogue of \cite[Lemma 2.11]{Br2}, which implies that $\EE^n$ is a $\UU$-module.
\begin{lem}\label{lem:action of U on E-basis}
For $\la\in\La^+_\hZ$ and $i\geq 0$, we have
$ %\begin{equation*}
E_iE_\la =\sum_{\mu}q^{m_\mu}E_\mu,
$ %\end{equation*}
where the sum is over all $\mu\in\Lambda^+_\hZ$
with $m_\mu=-(\alpha_i,\delta_{\la_{r+1}}+\ldots+\delta_{\la_n})$ satisfying $\mu=\la-\varepsilon_r$, for some $1\leq r\leq n$ and ${\rm wt}(\mu)={\rm wt}(\la)+\alpha_i$.
Also we have
$ %\begin{equation*}
F_iE_\la =\sum_{\nu}q^{n_\nu}E_\nu,
$ %\end{equation*}
where the sum is over all $\nu\in\Lambda^+_\hZ$
with $n_\nu=(\alpha_i,\delta_{\la_{1}}+\ldots+\delta_{\la_{r-1}})$ satisfying $\nu=\la+\varepsilon_r$, for some $1\leq r\leq n$ and ${\rm wt}(\nu)={\rm wt}(\la)-\alpha_i$.
\end{lem}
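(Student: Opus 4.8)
The plan is to compute the action of the Chevalley generators $E_i, F_i$ on the $E_\la$-basis by dualizing the known combinatorics on the $M_\mu$-basis, exactly as in the type $B$ setting. First I would record the dual statement of the basic fact (the analogue of \cite[Lemma 2.11]{Br2}) describing $E_i M_\mu$ and $F_i M_\mu$ explicitly in the $M$-basis: applying $E_i$ to $M_\mu = v_{\mu_1}\otimes\cdots\otimes v_{\mu_n}$ via the comultiplication \eqref{comul} produces a sum of monomials $M_\nu$ where $\nu$ is obtained from $\mu$ by lowering one entry $\mu_r$ by a simple root $\alpha_i$ (i.e. replacing $\delta_{\mu_r}$ by $\delta_{\mu_r}-\alpha_i$ whenever this is again of the form $\delta_j$), with a $q$-power coefficient coming from the $K_i^{-1}$ tails in $\Delta(E_i)=1\otimes E_i + E_i\otimes K_i^{-1}$; that power is precisely $-(\alpha_i,\delta_{\mu_{r+1}}+\cdots+\delta_{\mu_n})$, since the sites to the right of $r$ contribute $K_i^{-1}$ eigenvalues. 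Dually, $F_i M_\mu$ raises one entry $\mu_r$ by $\alpha_i$ with coefficient $q^{(\alpha_i,\delta_{\mu_1}+\cdots+\delta_{\mu_{r-1}})}$, coming from the $K_i\otimes F_i$ form of $\Delta(F_i)$.

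Next I would transport this through the bilinear form $(\,,\,)$ of \eqref{eq:bilinear form on Tn}, using the characterization \eqref{eq:pairing E and M} that $(E_\la, M_\mu)=\delta_{\la\mu}$ for $\la,\mu\in\La^+_\hZ$. The point is that $\{E_\la\}_{\la\in\La^+_\hZ}$ is the dual basis to $\{M_\mu\}_{\mu\in\La^+_\hZ}$ under the form restricted appropriately (more precisely, $E_\la$ pairs to $\delta$ with the $M_\mu$, and the span of the $E_\la$ is a $\UU$-submodule as asserted). To compute $E_i E_\la$ I would expand it in the $E_\mu$-basis and extract the coefficient of $E_\mu$ by pairing against $M_\mu$: $(E_i E_\la, M_\mu) = (E_\la, \rho(E_i) M_\mu)$ where $\rho$ is the adjoint of left multiplication by $E_i$ with respect to $(\,,\,)$. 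Since $(\,,\,)$ is the standard contravariant-type form on $\TT^n$ (the one making $\{M_\la\}$ orthonormal), and $E_i$ acts via the comultiplication, its adjoint is essentially $F_i$ up to the bar/$K$ adjustments; concretely the adjoint action sends $M_\mu$ to the sum over ways of lowering one entry by $\alpha_i$ — the same combinatorics as $E_i M_\mu$ but read in the other direction — producing exactly the terms $M_\nu$ with $\nu = \mu$ in the support of $E_i E_\la$. Tracking the $q$-powers then gives $m_\mu = -(\alpha_i,\delta_{\la_{r+1}}+\cdots+\delta_{\la_n})$, and symmetrically $n_\nu = (\alpha_i,\delta_{\la_1}+\cdots+\delta_{\la_{r-1}})$ for the $F_i$ action.

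The main obstacle I anticipate is the bookkeeping at the "boundary" of the $C$-type crystal — namely when lowering or raising an entry near $\pm\hf$ interacts with the special relations for $\alpha_0$ and with the wedge relations \eqref{eq:wedge_rel_C}, and in particular verifying that after applying $E_i$ or $F_i$ one still lands in $\La^+_\hZ$ (strictly decreasing weights) rather than producing coincidences or non-dominant tuples that would force a reexpansion via the $\succ$-triangularity. One must check that the resulting $\mu$ or $\nu$ is automatically in $\La^+_\hZ$ under the stated weight condition ${\rm wt}(\mu)={\rm wt}(\la)+\alpha_i$, so that the naive monomial combinatorics survives passage to $\EE^n$; this is where the analogue of \cite[Lemma 2.11]{Br2} does the real work, and I would cite it for that comparison just as the statement indicates. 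The remaining verification — that the span $\EE^n = \bigoplus \Q(q) E_\la$ is then closed under these operators, hence a $\UU$-module — is an immediate consequence once the two displayed formulas are established, since every $E_\mu$, $E_\nu$ appearing on the right-hand side again has index in $\La^+_\hZ$. I expect the $q$-power computations themselves to be routine given the explicit comultiplication, so the write-up would emphasize the boundary cases and the duality step rather than these.
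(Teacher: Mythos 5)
Your proposal is essentially the paper's own argument. The paper offers only a one-line citation -- ``obtained by using \eqref{eq:pairing E and M} and an analogue of \cite[Lemma 2.11]{Br2}'' -- and you have correctly unpacked that: first record the explicit combinatorics of $E_i$, $F_i$ on the monomial basis $M_\mu$ coming from the iterated comultiplication \eqref{comul} (the analogue of \cite[Lemma 2.11]{Br2}), and then transport this to the $E_\la$-basis by pairing with $M_\mu$ via $(E_\la, M_\mu)=\delta_{\la\mu}$. Your $q$-power bookkeeping is correct: the $K_i^{-1}$-tails to the right of site $r$ give $m_\mu=-(\alpha_i,\delta_{\la_{r+1}}+\cdots+\delta_{\la_n})$, and dually the $K_i$-heads to the left give $n_\nu=(\alpha_i,\delta_{\la_1}+\cdots+\delta_{\la_{r-1}})$. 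One small correction of wording: when you describe the adjoint of $E_i$ as ``sending $M_\mu$ to the sum over ways of lowering one entry by $\alpha_i$,'' you have the direction reversed -- $E_i$ raises weight by $\alpha_i$ (equivalently, replaces some $v_{\mu_r}$ by $v_{\mu_r-1}$), so its transpose with respect to the form making $\{M_\la\}$ orthonormal \emph{raises} one entry $\mu_r$ by $1$ (lowers weight by $\alpha_i$); this is why the delta in the pairing forces $\mu=\la-\varepsilon_r$ rather than $\la+\varepsilon_r$. The boundary analysis you flag (which $r$ survive, and when $\la\mp\varepsilon_r$ remains in $\La^+_\hZ$) is exactly where the care lies, and your plan to anchor it on the Lemma 2.11 analogue is the right call.
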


\section{Comparison of canonical bases of types $A$ and $C$}\label{sec:CBabc}

\subsection{Canonical basis of a mixed tensor}\label{subsec:CB of mixed tensor}
Let $\VV=\VV_+\oplus \VV_-$, where
\begin{equation*}
\VV_\pm:=\bigoplus_{r\in\hZ_+}\Q(q) v_{\pm r}.
\end{equation*}
Note that $\VV_+$ is a natural representation of $\UU^{\mf a}\subseteq \UU$, and $\VV_-$ is isomorphic to the restricted dual of $\VV_+$.
Fix $n\geq 1$. Let ${\bf s}=({\rm s}_1,\ldots,{\rm s}_n)$ be such that ${\rm s}_i\in\{+,-\}$ (or $\{1,-1\}$), and
\begin{equation}
 \label{TTs}
\TT^{n}_{\bf s}:=\VV_{{\rm s}_1}\otimes\cdots\otimes\VV_{{\rm s}_n}.
\end{equation}
For $\la\in\La_{\hZ}$ with  $M_\la\in \TT^n_{\bf s}$, we write
\begin{equation}\label{eq:sgn(lambda)}
{\rm sgn}(\la)={\bf s}.
\end{equation}
As a $\UU^{\mf a}$-module, we have $\TT^{n}=\bigoplus_{\bf s}\TT^{n}_{\bf s}$. %In particular, let $\TT^n_{(r,s)}=\TT^n_{\bf a}$, when ${\bf a}=(-,\cdots,-,+,\cdots,+)$, where $r$ is the number of $\overline{\phantom{x}}$'s and $s$ is the number of $+$'s for $r, s\in \Z_+$ such that $r+s=n$.
Let $\widehat{\TT}^n_{\bf s}$ be the closure of $\TT^n_{\bf s}$ in $\widehat{\TT}^n$, which has a topological $\Q(q)$-basis $\{\,M_\la\,|\,\la\in \La_{\hZ},\ {\rm sgn}(\la)={\bf s}\,\}$. %Then $\widehat{\TT}^n_{\bf s}$ is a $\UU^{\mf a}$-submodule of $\widehat{\TT}^n$ with a topological $\Q(q)$-basis $\{\,M_\la\,|\,\la\in \La_{\hZ},\ {\rm sgn}(\la)={\bf s}\,\}$.

Let $\psi$ denote the bar involution on $\widehat{\TT}^n$ in Section \ref{subsec:canonical basis of Tn}. We define an involution $\psi_{\mf a}$ on $\widehat{\TT}^n_{\bf s}$  in the same way as in $\psi$ by taking the limit of the involution $\psi^{(k)}_{\mf a}$ on $\pi_k(\TT^n_{\bf s})$ with respect to the quasi-$\mc R$-matrix $\Theta^{(k)}_{\mf a}$ in \eqref{eq:Theta_J}.
Then for $\la\in\La_{\hZ}$ with ${\rm sgn}(\la)={\bf s}$, we have by \eqref{eq:Theta_J} and \eqref{eq:bar on M},
\begin{equation*}
\psi_{\mf a}(M_\la)=M_\la + \sum_{\substack{\mu\prec \la \\ {\rm sgn}(\mu)={\bf s}}}r^{\mf a}_{\mu\la}(q)M_\mu,
\end{equation*}
for some $r^{\mf a}_{\mu\la}(q)\in \Z[q,q^{-1}]$. Hence,
there exists a unique topological $\Q(q)$-basis $\{\,T^{\mf a}_\la\,|\,\la\in \La_{\hZ},\ {\rm sgn}(\la)={\bf s}\,\}$ of $\widehat{\TT}^n_{\bf s}$ such that $\psi_{\mf a}(T^{\mf a}_\la)=T^{\mf a}_\la$ and
\begin{equation*}\label{eq:CB T type A}
\begin{split}
T^{\mf a}_\la=\sum_{\substack{\mu\in\La_\hZ \\ {\rm sgn}(\mu)={\bf s}}}t^{\mf a}_{\mu\la}(q)M_\mu,\\
\end{split}
\end{equation*}
with $t^{\mf a}_{\la\la}(q)=1$, $t^{\mf a}_{\mu\la}(q)\in q\Z[q]$ for $\mu\prec\la$, and $t^{\mf a}_{\mu\la}(q)=0$ otherwise.

Let $l, m\in\Z_+$ such that $l+m=n$.  Let $\Sigma_{l,m}$ be the set of ${\bf s}=({\rm s}_1,\ldots,{\rm s}_n)\in\{+,-\}^n$ that contains $l$ $-$'s and $m$ $+$'s. The symmetric group $\mf{S}_n$ acts on $\{+,-\}^n$ as place permutations, and the orbit
$$\mf{S}_n\cdot(\underbrace{-,\ldots,-}_l,\underbrace{+,\ldots,+}_m)$$
is in one-to-one correspondence with $\Sigma_{l,m}$, and also with the set of coset representatives of minimal length in $\mf{S}_n/\mf{S}_l\times \mf{S}_m$. Therefore, the Bruhat ordering on  $\mf{S}_n/\mf{S}_l\times \mf{S}_m$ induces a partial ordering $\leqq$ on $\Sigma_{l,m}$.
For example, exchanging a pair $(+,-)$ in ${\bf s}\in \Sigma_{l,m}$ with $(-,+)$ gives
\begin{equation}\label{eq:example of Bruhat on A}
{\bf s}'=(\ldots, - , \ldots, + ,\ldots) \leqq {\bf s}=(\ldots, + , \ldots, - ,\ldots),
%(\underbrace{-,\ldots,-}_p,\underbrace{+\ldots,+}_q)\leqq
%(\underbrace{-,\ldots,-}_{p-1},+,-\underbrace{+\ldots,+}_{q-1})
\end{equation}
so that
\begin{equation}
 \label{s:min}
{\bf s}_{\rm min} ={\bf s}_{\rm min}(l,m) :=(\underbrace{-,\ldots,-}_l,\underbrace{+\ldots,+}_m)\ \leqq \
{\bf s}_{\rm max}:=(\underbrace{+,\ldots,+}_m,\underbrace{-\ldots,-}_l),
\end{equation}
where ${\bf s}_{\rm min}$ and ${\bf s}_{\rm max}$ are the smallest and largest elements in $\Sigma_{l,m}$ with respect to $\leqq$, respectively.

Now choose any total ordering $\le$ on $\Sigma_{l,m}$ which is compatible with $\leqq$, that is, ${\bf s}\leqq{\bf t}$ implies that ${\bf s}\le{\bf t}$, for ${\bf s}, {\bf t}\in \Sigma_{l,m}$. For each ${\bf s}\in\{+,-\}^n$, we define
\begin{align*}
\widehat{\TT}^n_{\le \bf s}:=\bigoplus_{\bf t\le s}\widehat{\TT}^{n}_{\bf t},\quad\quad
\widehat{\TT}^n_{< \bf s}:=\bigoplus_{\substack{\bf t\leq s \\ \bf t\neq s}}\widehat{\TT}^{n}_{\bf t},
\end{align*}
so that we have $\widehat{\TT}^n_{\le \bf s}\subseteq\widehat{\TT}^n_{\le \bf t}$, for ${\bf s\le t}$.
It follows by definition of $\psi$ and \eqref{eq:example of Bruhat on A}
 that $\psi(\widehat{\TT}^n_{\le \bf s})\subseteq \widehat{\TT}^n_{\le \bf s}$, for all ${\bf s}\in \Sigma_{l,m}$.
%\red{(Why is Proposition ~\ref{prop:Bruhat PS=Br} needed here? No I don't think we need it)}.
Hence we have a filtration of $\psi$-invariant subspaces
\begin{align}\label{eq:filtration}
0\subsetneq \widehat{\TT}^n_{\le \bf s_{\rm min}}\subsetneq\cdots \subsetneq  \widehat{\TT}^n_{\le \bf s} \subsetneq\cdots \subsetneq  \widehat{\TT}^n_{\le \bf s_{\rm max}},
\end{align}
and furthermore, we have a $\UU^{\mf a}$-module isomorphism
$\widehat{\TT}^n_{\le \bf s}/\widehat{\TT}^n_{< \bf s}\cong \widehat{\TT}^n_{\bf s}.$

\begin{prop}\label{thm:head:can:basis}
Let ${\bf s}\in \Sigma_{l,m}$. For  $\la\in \La_{\hZ}$ with ${\rm sgn}(\la)={\bf s}$, we have
\begin{equation*}
\pi_{\bf s}(T_\la)=T^{\mf a}_\la,
\end{equation*}
where $\pi_{\bf s}:\widehat{\TT}^{n}\rightarrow \widehat{\TT}^{n}_{\bf s}$ is the canonical projection.
\end{prop}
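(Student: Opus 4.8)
The plan is to realize $\pi_{\bf s}(T_\la)$ as the image of $T_\la$ under the quotient map $\widehat{\TT}^n_{\le \bf s}\twoheadrightarrow\widehat{\TT}^n_{\le \bf s}/\widehat{\TT}^n_{< \bf s}\cong\widehat{\TT}^n_{\bf s}$ and then to argue that this image is forced to be $T^{\mf a}_\la$ by the uniqueness in \cite[Lemma 24.2.1]{Lu}. Concretely, I would check two properties of $\pi_{\bf s}(T_\la)$: that it is $\psi_{\mf a}$-invariant, and that it lies in $M_\la+\sum_{\mu\prec\la,\,{\rm sgn}(\mu)={\bf s}}q\Z[q]M_\mu$. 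The triangularity is immediate from \eqref{can:dual:can}: $\pi_{\bf s}$ merely deletes the terms $t_{\mu\la}(q)M_\mu$ with ${\rm sgn}(\mu)\ne{\bf s}$ while keeping $M_\la$. So the real content is the $\psi_{\mf a}$-invariance, and for that I first need $T_\la$ itself to live in the subspace $\widehat{\TT}^n_{\le \bf s}$.

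To see $T_\la\in\widehat{\TT}^n_{\le \bf s}$, I would re-run the canonical basis construction inside the $\psi$-stable subspace $\widehat{\TT}^n_{\le \bf s}$ (recall from the paragraph preceding \eqref{eq:filtration} that $\psi(\widehat{\TT}^n_{\le \bf s})\subseteq\widehat{\TT}^n_{\le \bf s}$). This subspace has topological basis $\{\,M_\mu\mid{\rm sgn}(\mu)\le{\bf s}\,\}$, the restriction of $\psi$ to it is unitriangular with respect to $\prec$ by \eqref{eq:bar on M} (intervals of $\prec$ being finite), so \cite[Lemma 24.2.1]{Lu} produces a unique bar-invariant element $T\in M_\la+\sum_{\substack{\mu\prec\la\\ {\rm sgn}(\mu)\le{\bf s}}}q\Z[q]M_\mu$. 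Viewed in $\widehat{\TT}^n$, this $T$ satisfies the defining properties of $T_\la$, so the uniqueness of $T_\la$ forces $T=T_\la$, whence $T_\la\in\widehat{\TT}^n_{\le \bf s}$.

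Next I would identify the involution induced by $\psi$ on the subquotient $\widehat{\TT}^n_{\le \bf s}/\widehat{\TT}^n_{< \bf s}\cong\widehat{\TT}^n_{\bf s}$ with $\psi_{\mf a}$. Since the generators $E_i,F_i$ with $i\ge 1$ act within a fixed sign pattern, $\UU^{\mf a}$ preserves each summand $\widehat{\TT}^n_{\bf t}$; hence $\widehat{\TT}^n_{\le \bf s}$ and $\widehat{\TT}^n_{< \bf s}$ are $\UU^{\mf a}$-submodules, and $\psi$ descends to an antilinear involution $\psi'$ on $\widehat{\TT}^n_{\bf s}$ which is $\UU^{\mf a}$-compatible, fixes each $v_r$, and is unitriangular for $\prec$ (all inherited from $\psi$ together with \eqref{eq:bar on M}). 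As $\psi_{\mf a}$ is, by construction, the bar involution of the tensor product $\VV_{{\rm s}_1}\otimes\cdots\otimes\VV_{{\rm s}_n}$ of based $\UU^{\mf a}$-modules, and such a bar involution is unique with these properties (cf.~\cite[Chapter 27]{Lu}, and \cite{CLW} for the completely analogous situation), we conclude $\psi'=\psi_{\mf a}$. Combining the steps: the composite $\widehat{\TT}^n_{\le \bf s}\twoheadrightarrow\widehat{\TT}^n_{\bf s}$ is exactly $\pi_{\bf s}$ restricted to $\widehat{\TT}^n_{\le \bf s}$, so $\pi_{\bf s}(T_\la)$ is the class of $T_\la$, it is $\psi'=\psi_{\mf a}$-invariant, and it has the required triangular shape; hence $\pi_{\bf s}(T_\la)=T^{\mf a}_\la$.

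I expect the main obstacle to be the bookkeeping with the nested completions $\widehat{\TT}^n$, $\widehat{\TT}^n_{\le \bf s}$, $\widehat{\TT}^n_{\bf s}$ --- in particular, verifying that Lusztig's Lemma applies inside each subspace (finiteness of $\prec$-intervals, convergence of the possibly-infinite sums) --- together with the identification $\psi'=\psi_{\mf a}$, where one is implicitly comparing $\Theta^{(k)}$ with $\Theta^{(k)}_{\mf a}$: the extra factors of $\Theta^{(k)}$, namely the $\Theta_\nu$ with $\nu\notin Q^+_{\mf a}$, must contribute nothing to $\psi'$, which is precisely the content of the uniqueness of the tensor-product bar involution. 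None of these steps is genuinely deep, and each has a precise precedent in \cite{Br2} and \cite{CLW}, but the argument has several moving parts that must be lined up consistently.
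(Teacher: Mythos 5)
Your overall architecture is sound and closely mirrors the paper's: establish that $T_\la$ lives in the $\psi$-stable subspace $\widehat{\TT}^n_{\le \bf s}$, pass to the quotient $\widehat{\TT}^n_{\le \bf s}/\widehat{\TT}^n_{<\bf s}\cong\widehat{\TT}^n_{\bf s}$, and then match the two canonical bases. The first and third steps are fine. The gap is in the middle step, namely the assertion that the involution $\psi'$ induced by $\psi$ on the subquotient must equal $\psi_{\mf a}$ because ``such a bar involution is unique with these properties.''

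The three properties you list do not suffice to pin down a bar involution. ``Fixes each $v_r$'' is only meaningful when $n=1$. For $n\ge 2$, two $\UU^{\mf a}$-compatible antilinear involutions on $\widehat{\TT}^n_{\bf s}$ that are both unitriangular for $\prec$ can differ: their composite is a unitriangular $\UU^{\mf a}$-automorphism, and unitriangularity alone is no constraint on such a map beyond fixing the $\prec$-minimal vectors---and $\widehat{\TT}^n_{\bf s}$ is not a priori generated over $\UU^{\mf a}$ by those. The genuine uniqueness theorem in Lusztig's Chapter 27 concerns $V\otimes M$ with $V$ a highest-weight module, where one must additionally \emph{pin down} $\psi$ on the slice $b^+\otimes M$; in your mixed tensor $\VV_{{\rm s}_1}\otimes\cdots\otimes\VV_{{\rm s}_n}$ the factors $\VV_+$ and $\VV_-$ are respectively highest- and lowest-weight, so invoking that theorem requires an inductive argument peeling off the outermost factors and verifying the pinning-down condition at each stage. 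Your closing remark---that ``the extra factors $\Theta_\nu$ with $\nu\notin Q^+_{\mf a}$ must contribute nothing to $\psi'$, which is precisely the content of the uniqueness''---is circular: that vanishing is what one must prove, and it is the entire substance of the paper's proof. Concretely, the paper proves the single equality
\[
\psi(M_\la+\widehat{\TT}^n_{< \bf s})=\psi_{\mf a}(M_\la)+ \widehat{\TT}^n_{< \bf s}
\]
by induction on $n$ (splitting on ${\rm s}_n=+$ and ${\rm s}_n=-$), expanding $\psi$ through the quasi-$\mc R$-matrix $\Theta$ and using that for $\nu\notin Q^+_{\mf a}$ the component $\Theta_\nu^+$ (resp.\ $\Theta_\nu^-$) carries a factor $E_0$ (resp.\ $F_0$), which strictly lowers the signature pattern in the ordering $\leqq$; this is exactly the computation your argument elides. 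So: same skeleton, but the decisive step ($\psi'=\psi_{\mf a}$) is asserted rather than proved, and the abstract uniqueness you appeal to is not available in the form you use it.
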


\begin{proof}
Let $M_\la\in \TT^n_{\bf s}$ be so that $M_\la+\widehat{\TT}^n_{< \bf s}\in \widehat{\TT}^n_{\le \bf s}/\widehat{\TT}^n_{< \bf s}$.
To prove the proposition, it is enough to show the following:
\begin{align}\label{eq:psi=psiJ}
\psi(M_\la+\widehat{\TT}^n_{< \bf s})=\psi_{\mf a}(M_\la)+ \widehat{\TT}^n_{< \bf s}.
\end{align}
%which in particular implies $\psi(M_\la)=\psi_{\mf a}(M_\la)$ if ${\bf s}={\bf s}_{\rm min}$.
Indeed, the proof of \eqref{eq:psi=psiJ} reduces to the case of finite rank. So we may assume that $\psi$, $\psi_{\mf a}$, and $\widehat{\TT}^n_{< \bf s}$ are $\psi^{(k)}$, $\psi_{\mf a}^{(k)}$, and $\pi_k(\TT^n_{< \bf s})$, respectively, for $M_\la\in \TT^n_{< k}$. Below we use a shorthand notation to denote $\Theta_\nu(u\otimes v)=\Theta^+_\nu(u)\otimes \Theta^-_\nu(v)$ for $\nu\in Q^+$ and $u, v\in \TT$.

We proceed by induction on $n$. The claim is clear when $n=1$ since the elements $v_r$ ($r\in\hZ$) are canonical basis elements for both $\psi$ and $\psi_{\mf a}$.
We assume that $n\geq 2$.

We observe that $\UU^{\mf a}\VV_\pm\subseteq\VV_\pm$, and $F_0\VV_+=E_0\VV_-=0$. Also,
if $\nu\in Q^+\setminus Q^+_{\mf a}$, then $\Theta^+_\nu$ (respectively, $\Theta^-_\nu$) is a linear combination of product of $E_i$'s (respectively, $F_i$'s) including a factor $E_0$ (respectively, $F_0$) by \eqref{eq:PBW vector} and \eqref{eq:Theta_nu}.

First suppose that ${\rm s}_n=+$. Set ${\la'}=(\la_1,\ldots,\la_{n-1})$. We have by induction hypothesis and the observations in the above paragraph
\begin{align*}
\psi(M_\la)=&\sum_{\nu\in Q^+}\Theta^+_{\nu}(\psi(M_{{\la'}}))\otimes\Theta^-_{\nu}(v_{\la_n})=
\sum_{\nu\in Q_{\mf a}^+}\Theta^+_\nu(\psi(M_{{\la'}}))\otimes\Theta^-_{\nu}(v_{\la_n})\\
=&\sum_{\nu\in Q_{\mf a}^+}\Theta^+_\nu(\psi_{\mf a}(M_{{\la'}}))\otimes\Theta^-_{\nu}(v_{\la_n}) +\sum_{\nu\in Q_{\mf a}^+}\sum_{\eta} c_\eta\Theta^+_\nu(M_{\eta})\otimes\Theta^-_{\nu}(v_{\la_n})\\
=&\psi_{\mf a}(M_{\la}) +\sum_{\nu\in Q_{\mf a}^+}\sum_\eta c_\eta\Theta^+_{\nu}(M_{\eta})\otimes\Theta^-_{\nu}(v_{\la_n}).
\end{align*}
Here $c_\eta\in\Z[q,q^{-1}]$ and $M_\eta\in\TT^{n-1}_{< \bf s'}$ with  ${\bf s'}= ({\rm s}_1,\ldots,{\rm s}_{n-1})$, and hence the second summand in the last equation lies in $\TT^n_{< \bf s}$.

Next suppose that ${\rm s}_n=-$. Choose $n'<n$ to be minimal such that  ${\rm s}_{j}=-$ for all $n'<j\le n$. Denote ${\bf s'}= ({\rm s}_1,\ldots,{\rm s}_{n'})$,
${\la'}=(\la_1,\ldots,\la_{n'})$ and ${\la''}=(\la_{n'+1},\ldots,\la_n)$. We have by induction hypothesis
\begin{align*}
\psi(M_\la)
=&\sum_{\nu\in Q^+}\Theta^+_\nu(\psi(M_{{\la'}}))\otimes\Theta^-_{\nu}(\psi(M_{\la''}))
= \sum_{\nu\in Q^+}\Theta^+_\nu(\psi(M_{{\la'}}))\otimes\Theta^-_{\nu}(\psi_{\mf a}(M_{\la''}))\\
=&\sum_{\nu\in Q^+}\Theta^+_\nu(\psi_{\mf a}(M_{{\la'}}))\otimes\Theta^-_{\nu}(\psi_{\mf a}(M_{\la''}))
+\sum_{\nu\in Q^+}\sum_{\zeta <\bf s'} c_\zeta \Theta^+_\nu(M_{\zeta})\otimes\Theta^-_{\nu}(\psi_{\mf a}(M_{\la''}))\\
=&\sum_{\nu\in Q_{\mf a}^+}\Theta^+_\nu(\psi_{\mf a}(M_{{\la'}}))\otimes\Theta^-_{\nu}(\psi_{\mf a}(M_{\la''}))+\sum_{\nu\in Q^+\setminus Q^+_{\mf a}}\Theta^+_\nu(\psi_{\mf a}(M_{{\la'}}))\otimes\Theta^-_{\nu}(\psi_{\mf a}(M_{\la''}))\\
&+\sum_{\nu\in Q^+}\sum_{\zeta<\bf s'} c_\zeta\Theta^+_\nu(M_{\zeta})\otimes\Theta^-_{\nu}(\psi_{\mf a}(M_{\la''})).
\end{align*}
where $c_\zeta\in\Z[q,q^{-1}]$ and $M_\zeta\in\TT^{n'}_{< \bf s'}$ with ${\bf s'}= ({\rm s}_1,\ldots,{\rm s}_{n'})$. By using \eqref{eq:example of Bruhat on A}, we see that the second and the third summands above lie in $\TT^n_{< \bf s}$, while the first summand is $\psi_{\mf a}(M_\la)$. This completes the induction.
\end{proof}

\begin{cor}\label{cor:ACB=CCB}
If ${\bf s}={\bf s}_{\rm min}\in \Sigma_{l,m}$, then we have $\psi(v)=\psi_{\mf a}(v)$, for $v\in \widehat{\TT}^n_{\bf s}$,
and
$%\begin{equation*}
T_\la=T^{\mf a}_\la,
$ %\end{equation*}
for $\la\in \La_{\hZ}$ with ${\rm sgn}(\la)={\bf s}$.
\end{cor}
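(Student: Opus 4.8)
The plan is to deduce the statement from \propref{thm:head:can:basis}, exploiting that for ${\bf s}={\bf s}_{\rm min}$ the bottom of the filtration \eqref{eq:filtration} is as small as it can possibly be. Since the total order $\le$ refines $\leqq$ and ${\bf s}_{\rm min}$ is the smallest element of $\Sigma_{l,m}$ with respect to $\leqq$ by \eqref{s:min}, there is no ${\bf t}\in\Sigma_{l,m}$ with ${\bf t}\le{\bf s}_{\rm min}$ other than ${\bf s}_{\rm min}$ itself; hence $\widehat{\TT}^n_{<{\bf s}_{\rm min}}=0$ and $\widehat{\TT}^n_{\le{\bf s}_{\rm min}}=\widehat{\TT}^n_{{\bf s}_{\rm min}}$. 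In particular, by the $\psi$-invariance of the spaces in \eqref{eq:filtration}, the subspace $\widehat{\TT}^n_{{\bf s}_{\rm min}}$ is stable under the bar involution $\psi$ of $\widehat{\TT}^n$.

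For the first assertion, I would specialize the identity \eqref{eq:psi=psiJ} established in the proof of \propref{thm:head:can:basis} to ${\bf s}={\bf s}_{\rm min}$: since $\widehat{\TT}^n_{<{\bf s}_{\rm min}}=0$, it simply reads $\psi(M_\la)=\psi_{\mf a}(M_\la)$ for every $\la\in\La_\hZ$ with ${\rm sgn}(\la)={\bf s}_{\rm min}$. The vectors $M_\la$ with ${\rm sgn}(\la)={\bf s}_{\rm min}$ form a topological $\Q(q)$-basis of $\widehat{\TT}^n_{{\bf s}_{\rm min}}$, and $\psi$ and $\psi_{\mf a}$ are continuous maps on this space that are antilinear over $\Q(q)$ for the same involution $q\mapsto q^{-1}$; hence they agree on all of $\widehat{\TT}^n_{{\bf s}_{\rm min}}$, which is precisely the claim $\psi(v)=\psi_{\mf a}(v)$.

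For the second assertion, fix $\la\in\La_\hZ$ with ${\rm sgn}(\la)={\bf s}_{\rm min}$. The element $T^{\mf a}_\la$ lies in $\widehat{\TT}^n_{{\bf s}_{\rm min}}\subset\widehat{\TT}^n$, so the first part gives $\psi(T^{\mf a}_\la)=\psi_{\mf a}(T^{\mf a}_\la)=T^{\mf a}_\la$, i.e.\ $T^{\mf a}_\la$ is bar-invariant in $\widehat{\TT}^n$. Writing $T^{\mf a}_\la=M_\la+\sum_{\mu\prec\la}c_\mu M_\mu$, the coefficient $c_\mu$ equals $t^{\mf a}_{\mu\la}(q)\in q\Z[q]$ when ${\rm sgn}(\mu)={\bf s}_{\rm min}$ and is $0$ otherwise, so in all cases $c_\mu\in q\Z[q]$. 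Thus $T^{\mf a}_\la$ has exactly the characterizing form of $T_\la$ in \eqref{can:dual:can}, and uniqueness of the canonical basis forces $T_\la=T^{\mf a}_\la$. (Equivalently, the same two observations show $T_\la\in\widehat{\TT}^n_{{\bf s}_{\rm min}}$, on which $\pi_{{\bf s}_{\rm min}}$ is the identity, so $T_\la=\pi_{{\bf s}_{\rm min}}(T_\la)=T^{\mf a}_\la$ by \propref{thm:head:can:basis}.)

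I do not expect a genuine obstacle here: the inductive heart of the matter — the identity \eqref{eq:psi=psiJ} relating $\psi$ to $\psi_{\mf a}$ — has already been carried out in \propref{thm:head:can:basis}, and for ${\bf s}_{\rm min}$ the ``lower terms'' it produces simply vanish. The only points that need a little care are completion bookkeeping: checking that equality of the two antilinear involutions on the topological basis $\{M_\la\}$ extends to the completion $\widehat{\TT}^n_{{\bf s}_{\rm min}}$, and that $T^{\mf a}_\la$ — a priori an element of $\widehat{\TT}^n_{\bf s}$ — is legitimately an element of $\widehat{\TT}^n$ whose expansion against the \emph{full} basis $\{M_\mu\}_{\mu\in\La_\hZ}$ (not only those with ${\rm sgn}={\bf s}_{\rm min}$) still has all off-diagonal coefficients in $q\Z[q]$; both are immediate.
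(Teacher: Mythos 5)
Your proposal is correct and follows essentially the same route as the paper, which simply cites \eqref{eq:filtration} and \eqref{eq:psi=psiJ}: your argument is precisely the unwinding of that citation, observing that $\widehat{\TT}^n_{<{\bf s}_{\rm min}}=0$ so that \eqref{eq:psi=psiJ} collapses to $\psi(M_\la)=\psi_{\mf a}(M_\la)$, and then deducing the canonical basis identity from bar-invariance and uniqueness.
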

\begin{proof}
It follows directly from \eqref{eq:filtration} and \eqref{eq:psi=psiJ}.
\end{proof}

\begin{rem}
The statements of Proposition \ref{thm:head:can:basis} and Corollary \ref{cor:ACB=CCB} for $n=2$ actually can be observed from Example \ref{ex:CB:n=2}.
\end{rem}

%Let us end this subsection with the following positivity conjecture.
%
%\begin{conj}\label{conj:positivity of T-2}
%Let ${\bf s}\in \{+,-\}^n$. For  $\la\in \La_{\hZ}$ with ${\rm sgn}(\la)={\bf s}$, we have
%\begin{equation*}
%T_\la=T^{\mf a}_\la + \sum_{\mu}d_{\mu\la}(q)T^{\mf a}_{\mu},
%\end{equation*}
%with $d_{\mu\la}(q)\in q\Z_+[q]$, where the sum is over $\mu\in \La_\hZ$ such that ${\rm sgn}(\mu)=w{\bf s}<{\bf s}$, for some $w\in \mf S_n$.
%\end{conj}
%
%\begin{rem}{\rm
%Since the positivity for the expansion of $T^{\mf a}_\mu$ in terms of the $M_\nu$'s over $\Z[q]$ has been established in \cite{BLW},
%the validity of Conjecture \ref{conj:positivity of T-2} would imply positivity and finiteness of $T_\la$ in Conjecture \ref{conj:positivity of T-1}.
%}
%\end{rem}

%%%%%%
\subsection{Canonical basis of the $q$-wedge spaces of types $A$ and $C$}\label{subsec:CBA=CBC}

Fix $n\geq 1$ and let $l, m\in\Z_+$ such that $l+m=n$.
Let
\begin{equation*}
\FF_{(l,m)}:=\bigoplus_{\substack{\la\in\La_{\hZ}^+ \\ {\rm sgn}(\la)\in \Sigma_{l,m}}}\Q(q)F_\la,
\end{equation*}
and let $\widehat{\FF}_{(l,m)}$ be its closure in $\widehat{\FF}^n$. Note $\FF^n =\bigoplus_{l+m=n} \FF_{l,m}$.

\begin{lem}
The set $\{\,U_\la\,|\, \la\in\La_{\hZ}^+,\ {\rm sgn}(\la)\in \Sigma_{l,m}\,\}$ is a topological $\Q(q)$-basis of $\widehat{\FF}_{(l,m)}$.
\end{lem}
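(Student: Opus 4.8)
The plan is to show that $\{\,U_\la\,|\,\la\in\La_{\hZ}^+,\ {\rm sgn}(\la)\in\Sigma_{l,m}\,\}$ spans a closed subspace that coincides with $\widehat{\FF}_{(l,m)}$. The key observation is that the change of basis from $\{F_\la\}$ to $\{U_\la\}$ on $\widehat{\FF}^n$ is "triangular" with respect to the Bruhat ordering $\succ$ (see \eqref{eq:CB on Fock}), and that this ordering \emph{refines} the partition of $\La_{\hZ}^+$ by the sign pattern ${\rm sgn}(\la)\in\Sigma_{l,m}$. Concretely, I would first verify that if $\la,\mu\in\La_{\hZ}^+$ with $\mu\succeq\la$, then ${\rm sgn}(\mu)$ and ${\rm sgn}(\la)$ lie in the same $\Sigma_{l,m}$; this is because $\mu\succeq\la$ forces ${\rm wt}(\mu)={\rm wt}(\la)$, hence the multiset $\{\la_i\}$ and $\{\mu_i\}$ have the same number of positive and negative entries (a half-odd-integer cannot be $0$, so no cancellation issues arise), so the number of $-$'s among the $\mu_i$ equals that among the $\la_i$, namely $l$.

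Granting this, the proof is short. Fix $\la\in\La_{\hZ}^+$ with ${\rm sgn}(\la)\in\Sigma_{l,m}$. By \eqref{eq:CB on Fock}, $U_\la=\sum_\mu u_{\mu\la}(q)F_\mu$ where $u_{\mu\la}(q)\ne 0$ only for $\mu\succeq\la$; by the sign-stability claim, every such $\mu$ satisfies ${\rm sgn}(\mu)\in\Sigma_{l,m}$, so $U_\la\in\widehat{\FF}_{(l,m)}$. This shows the proposed set is contained in $\widehat{\FF}_{(l,m)}$, and since $\{\,U_\la\,|\,\la\in\La_{\hZ}^+\,\}$ is linearly independent (indeed a topological basis of $\widehat{\FF}^n$), the proposed set is linearly independent too. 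For the spanning statement, I would argue by the usual triangularity inversion: given $\la\in\La_{\hZ}^+$ with ${\rm sgn}(\la)\in\Sigma_{l,m}$, one can write $F_\la=\sum_{\mu\succeq\la}v_{\mu\la}(q)U_\mu$ for suitable coefficients $v_{\mu\la}(q)$ (inverting the unitriangular matrix $(u_{\mu\la})$, using that $\{\nu:\la\succeq\nu\succeq\mu\}$ is finite, as recorded after \eqref{eq:Lambda over half integer}), and again every $\mu$ appearing has ${\rm sgn}(\mu)\in\Sigma_{l,m}$. Hence each $F_\la$ with ${\rm sgn}(\la)\in\Sigma_{l,m}$ lies in the closed span of the proposed set, so that span is all of $\widehat{\FF}_{(l,m)}$.

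Finally I would note that the proposed set is bar-invariant (each $U_\la$ is), and that the filtration/completion subtleties are handled exactly as for $\widehat{\FF}^n$ itself: $\widehat{\FF}_{(l,m)}$ was defined as the closure of $\FF_{(l,m)}=\bigoplus\Q(q)F_\la$, so the topological-basis property follows once we know the $U_\la$ are an "upper-unitriangular" (with respect to $\succ$) reparametrization of the $F_\la$ with the same index set $\{\,\la\in\La_{\hZ}^+\mid {\rm sgn}(\la)\in\Sigma_{l,m}\,\}$.

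The main obstacle, such as it is, is the sign-stability lemma: one must be sure that the Bruhat ordering $\succeq$ on $\La_{\hZ}$ cannot connect weights with different $(l,m)$ sign type. This is genuinely where the hypothesis $\la_i\in\hZ$ (so $\la_i\ne 0$) is used — over integer weights a coordinate could pass through $0$ and the analogous statement would be false — and it is exactly the kind of fact that also underlies the $\UU^{\mf a}$-decomposition $\TT^n=\bigoplus_{\bf s}\TT^n_{\bf s}$ used in Section~\ref{subsec:CB of mixed tensor}. Once that is in hand, everything else is the standard triangularity bookkeeping already established for $\widehat{\FF}^n$.
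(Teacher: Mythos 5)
Your argument is correct, and it is essentially the same strategy as the paper's: establish that the Bruhat order $\succeq$ on $\La^+_{\hZ}$ is "sign-stable" in the sense that $\mu\succeq\la$ forces ${\rm sgn}(\mu)$ and ${\rm sgn}(\la)$ to lie in the same $\Sigma_{l,m}$, then invoke the unitriangularity of \eqref{eq:CB on Fock}. The one place you genuinely diverge from the paper is in how you prove sign-stability. The paper appeals to Proposition~\ref{prop:Bruhat PS=Br}, which replaces $\succeq$ by the Penkov--Serganova order $\succcurlyeq$ where each elementary step adds $\varepsilon_s-\varepsilon_t$ ($s<t$) to a weight $\nu$ with $\nu_s+\nu_t=0$; over $\hZ$ this forces $\nu_s>0>\nu_t$ and pushes both farther from zero, so the sign pattern is preserved step by step. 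You instead use the definition of $\succeq$ directly: $\mu\succeq\la$ implies ${\rm wt}(\mu)={\rm wt}(\la)$, and the count of positive minus negative coordinates is a linear invariant of ${\rm wt}$ (the image under $\delta_r\mapsto 1$), which together with $m+l=n$ pins down $(l,m)$. That is arguably more elementary since it avoids the comparison of the two Bruhat orders, but both proofs ultimately rest on the same nonvanishing-of-coordinates feature of half-integer weights.

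One small imprecision to tidy: your parenthetical "no cancellation issues arise" is not quite the right explanation. Cancellations in the $\delta$-expansion of ${\rm wt}(\la)$ can occur (e.g.\ when $\la_i+\la_j=0$, the terms $\delta_{\la_i}$ and $\delta_{\la_j}$ cancel in $P$), so ${\rm wt}(\la)$ does not determine the multiset of coordinates. What it does determine is the sum of the $\delta$-coefficients, which equals $m-l$; combined with $m+l=n$ (this is where $\la_i\ne 0$ is used) you recover $(l,m)$. The conclusion is correct; only the reason stated is slightly off. The rest of the write-up (inclusion, linear independence, spanning via unitriangular inversion using finiteness of Bruhat intervals) is exactly the standard bookkeeping the paper compresses into "Hence, ... is a topological $\Q(q)$-basis."
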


\begin{proof}
By \eqref{eq:CB on Fock} and Proposition \ref{prop:Bruhat PS=Br}, we see that ${\rm sgn}(\mu)\in \Sigma_{l,m}$ for $\mu\in\La^+_\hZ$ with $\mu\succ \la$. Hence, $U_\la\in \widehat{\FF}_{(l,m)}$ and $\{\,U_\la\,|\, \la\in\La_{\hZ}^+,\ {\rm sgn}(\la)\in \Sigma_{l,m}\,\}$ is a topological $\Q(q)$-basis of $\widehat{\FF}_{(l,m)}$.
\end{proof}

We shall show that $\{\,U_\la\,|\, \la\in\La_{\hZ}^+ \}$ is indeed a canonical basis of type $A$.
To that end, we define the $q$-wedge spaces of type $A$:
\begin{equation*}
\bigwedge^k \VV_+ := \VV_+^{\otimes k}/\KK^k_+,\quad\quad
\bigwedge^k \VV_- := \VV_-^{\otimes k}/\KK^k_-,\quad (k\geq 0),
\end{equation*}
where $\KK^k_\pm=\KK^k \cap \VV_\pm^{\otimes k}$. Note that $\bigoplus_{k\geq 0}\KK^k_\pm$ is the two-sided ideal in the tensor algebra $\bigoplus_{k\geq 0}\VV_\pm^{\otimes k}$ generated by $v_r\otimes v_r$ and $v_r\otimes v_s +q v_s\otimes v_r$ for $r, s\in \pm\hZ_+$ with $r>s$, which is invariant under the action of $\UU^{\mf a}$, and hence $\bigwedge^k \VV_\pm$ are well-defined $\UU^{\mf a}$-modules.
We may also regard $\bigwedge^k \VV_\pm$ as a subspace of $\FF^k$ by definition.

We let
\begin{equation}\label{eq:Fock space A}
\FF^{\mf a}_{(l,m)}:=\bigwedge^l \VV_- \otimes \bigwedge^m \VV_+.
\end{equation}
For $\la\in\La_{\hZ}^+$ with ${\rm sgn}(\la)\in \Sigma_{l,m}$, put
\begin{equation*}
F^{\mf a}_\la := F_{\la^-}\otimes F_{\la^+},
\end{equation*}
where $\la^+=(\la_1,\ldots,\la_l)$ and $\la^-=(\la_{l+1},\ldots,\la_n)$.
The following can be verified directly.

\begin{lem}\label{lem:Fa=F}
The set $\{\,F^{\mf a}_\la\,|\,\la\in\La_{\hZ}^+, \ {\rm sgn}(\la)\in \Sigma_{l,m}\}$ forms a $\Q(q)$-basis of $\FF_{(l,m)}^{\mf a}$, and there exists
an isomorphism of  $\UU^{\mf a}$-modules $\FF_{(l,m)}^{\mf a}\rightarrow \FF_{(l,m)}$
sending $F^{\mf a}_\la$ to $F_\la$, for $\la\in\La_{\hZ}^+$ with ${\rm sgn}(\la)\in \Sigma_{l,m}$.
\end{lem}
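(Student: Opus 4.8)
The plan is to check both assertions by writing down explicit bases and comparing the $\UU^{\mf a}$-actions. First I would verify that $\{\,F^{\mf a}_\la\,\}$ is a $\Q(q)$-basis of $\FF^{\mf a}_{(l,m)}$. By definition $F^{\mf a}_\la = F_{\la^-}\otimes F_{\la^+}$, where $\la^- = (\la_{l+1},\ldots,\la_n)$ has all entries in $-\hZ_+$ (since ${\rm sgn}(\la)\in\Sigma_{l,m}$ means the last $l$ signs are $-$) and is strictly decreasing, while $\la^+ = (\la_1,\ldots,\la_l)$ has all entries in $\hZ_+$ and is strictly decreasing. Recall from Section~\ref{subsec:completion} that $\{\,F_\mu\,|\,\mu\in\La^+_\hZ\,\}$ is a basis of $\FF^n$ (as a consequence of \cite[Theorem 1.2]{Bg}); the same argument, applied within the tensor algebras on $\VV_-$ and on $\VV_+$ separately, shows that $\{\,F_{\mu^-}\,\}$ and $\{\,F_{\mu^+}\,\}$ are bases of $\bigwedge^l\VV_-$ and $\bigwedge^m\VV_+$ respectively, as $\mu^\pm$ ranges over strictly decreasing tuples in $\mp\hZ_+$. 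Hence $\{\,F_{\la^-}\otimes F_{\la^+}\,\}$ is a basis of the tensor product $\FF^{\mf a}_{(l,m)}$, indexed precisely by pairs $(\la^-,\la^+)$, i.e.\ by $\la\in\La^+_\hZ$ with ${\rm sgn}(\la)\in\Sigma_{l,m}$.

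Next I would construct the isomorphism. The natural candidate is the linear map $\Phi:\FF^{\mf a}_{(l,m)}\to\FF_{(l,m)}$ sending $F^{\mf a}_\la\mapsto F_\la$; since both sides have bases indexed by the same set, $\Phi$ is automatically a $\Q(q)$-linear isomorphism, and the only real content is that $\Phi$ intertwines the $\UU^{\mf a}$-actions. For this I would trace through how $\VV^{\otimes n}$ decomposes as a $\UU^{\mf a}$-module. Since $\UU^{\mf a}$ is generated by $E_i,F_i$ for $i\geq 1$, and these preserve each of $\VV_+$ and $\VV_-$ (by the explicit formulas in Section~\ref{subsec:completion}: $E_i,F_i$ for $i\ge 1$ send $v_r$ to a combination of $v_s$ with $s$ of the same sign), the comultiplication \eqref{comul} shows that $\TT^n_{\bf s} = \VV_{{\rm s}_1}\otimes\cdots\otimes\VV_{{\rm s}_n}$ is a $\UU^{\mf a}$-submodule of $\TT^n$, and in particular for ${\bf s}={\bf s}_{\rm min}$ we get $\TT^n_{\bf s_{\rm min}} = \VV_-^{\otimes l}\otimes\VV_+^{\otimes m}$. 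Passing to the wedge quotient, $\KK^n\cap\TT^n_{\bf s_{\rm min}}$ is exactly the ideal generated by the sign-homogeneous relations $v_r\otimes v_r$ and $v_r\otimes v_s+qv_s\otimes v_r$ (for $r>s$, same sign), because the mixed relations in \eqref{eq:wedge_rel_C} involving $v_r\otimes v_{-r}$ never appear inside $\TT^n_{\bf s_{\rm min}}$ once we only allow the prescribed sign pattern — here one must be slightly careful: the wedge ideal $\KK^n$ is generated inside the full tensor algebra, so I would argue that the component of $\KK^n$ landing in $\TT^n_{\bf s_{\rm min}}$ is spanned by products $w\otimes(\text{relation})\otimes w'$ with the relation sign-homogeneous of the appropriate sign, which reduces to the type $A$ wedge ideal $\KK^l_-\otimes\VV_+^{\otimes m} + \VV_-^{\otimes l}\otimes\KK^m_+$ up to the commuting relations between a $v_r$ with $r<0$ and a $v_s$ with $s>0$. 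This last point — that the $q$-commuting relations across the $\VV_-\,/\,\VV_+$ divide let one identify $\FF_{(l,m)}$ with $\bigwedge^l\VV_-\otimes\bigwedge^m\VV_+$ as $\UU^{\mf a}$-modules and not merely as vector spaces — is the main obstacle, and I expect it to follow by the same kind of straightening/normal-form argument used to prove $\{\,F_\la\,\}$ is a basis, keeping track of which $M_\mu$ have ${\rm sgn}(\mu)\in\Sigma_{l,m}$.

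Finally I would assemble the pieces: $\FF_{(l,m)}$, with its $\UU^{\mf a}$-module structure inherited from $\FF^n$, is the quotient of $\TT^n_{\bf s_{\rm min}}$ by the image of $\KK^n$, which by the previous step is $\KK^l_-\otimes\VV_+^{\otimes m}+\VV_-^{\otimes l}\otimes\KK^m_+$; the quotient of $\VV_-^{\otimes l}\otimes\VV_+^{\otimes m}$ by this is exactly $\bigwedge^l\VV_-\otimes\bigwedge^m\VV_+ = \FF^{\mf a}_{(l,m)}$ by definition \eqref{eq:Fock space A}, and under this identification the coset of $M_{w_0\la}$ (using the canonical projection $\pi$) goes to $F_{\la^-}\otimes F_{\la^+} = F^{\mf a}_\la$, matching $F_\la = \pi(M_{w_0\la})$. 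Hence $\Phi$ is the desired $\UU^{\mf a}$-module isomorphism. I would present this as a short ``direct verification'' in the spirit of the statement, emphasizing the sign-preservation of $\UU^{\mf a}$ under \eqref{comul} and the sign-homogeneity of the relevant wedge relations, and relegating the straightening bookkeeping to a reference back to the proof that $\{\,F_\la\,\}$ is a basis.
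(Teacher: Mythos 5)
The paper itself offers no argument for this lemma --- it is labeled a direct verification --- so there is nothing to compare it against step by step; I can only assess your reconstruction on its own terms. Your first half (that $\{\,F^{\mf a}_\la\,\}$ is a $\Q(q)$-basis of $\FF^{\mf a}_{(l,m)}$) is fine: the Bergman argument applies separately inside $\VV_-^{\otimes l}$ and $\VV_+^{\otimes m}$, and bases of a tensor product are tensor products of bases. For the second half, your plan is essentially right, but you stake the argument on the exact equality $\KK^n\cap\TT^n_{\mathbf s_{\rm min}}=\KK^l_-\otimes\VV_+^{\otimes m}+\VV_-^{\otimes l}\otimes\KK^m_+$, which you do not actually establish --- and the worry you raise (that $\KK^n$ is generated in the full tensor algebra, so its piece in $\TT^n_{\mathbf s_{\rm min}}$ could a priori be larger than the sign-homogeneous ideal) is a genuine one, since the mixed generators $v_r\otimes v_{-r}+\cdots$ and the $q$-commuting relation across the $\VV_-/\VV_+$ boundary spread across several sign sectors. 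The good news is that you never need the hard inclusion. The containment $\KK^l_-\otimes\VV_+^{\otimes m}+\VV_-^{\otimes l}\otimes\KK^m_+\subseteq\KK^n\cap\TT^n_{\mathbf s_{\rm min}}$ is immediate, and it already gives a $\UU^{\mf a}$-equivariant surjection
\[
\FF^{\mf a}_{(l,m)}=\TT^n_{\mathbf s_{\rm min}}\big/\bigl(\KK^l_-\otimes\VV_+^{\otimes m}+\VV_-^{\otimes l}\otimes\KK^m_+\bigr)\ \twoheadrightarrow\ \FF_{(l,m)}=\pi\bigl(\TT^n_{\mathbf s_{\rm min}}\bigr),
\]
both induced by the identity on $\TT^n_{\mathbf s_{\rm min}}$; it carries the class of $M_{w_0\la}$ on the left (which is $F^{\mf a}_\la$) to $\pi(M_{w_0\la})=F_\la$ on the right, hence sends the basis $\{F^{\mf a}_\la\}$ bijectively onto the basis $\{F_\la\}$ and is therefore injective. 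This bypasses the ideal-intersection computation entirely and also recovers, as a corollary, the equality you were trying to prove directly. Two smaller points: surjectivity of $\pi|_{\TT^n_{\mathbf s_{\rm min}}}$ onto $\FF_{(l,m)}$ (which you use implicitly) needs the remark that every $F_\la$ with ${\rm sgn}(\la)\in\Sigma_{l,m}$ is $\pi(M_{w_0\la})$ with $w_0\la$ of sign $\mathbf s_{\rm min}$, together with the fact that the straightening of $\pi(M_\mu)$ never leaves the sign class $\Sigma_{l,m}$; and the indexing $\la^+=(\la_1,\ldots,\la_l)$, $\la^-=(\la_{l+1},\ldots,\la_n)$ that you copied from the paper is a typo (it should be $\la^+=(\la_1,\ldots,\la_m)$, $\la^-=(\la_{m+1},\ldots,\la_n)$ so that $F_{\la^-}\in\bigwedge^l\VV_-$ and $F_{\la^+}\in\bigwedge^m\VV_+$ live where they should); your prose about ``the last $l$ signs are $-$'' reveals you mean the right thing, but the tuples are mislabelled.
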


By Lemma \ref{lem:Fa=F}, we can take a completion $\widehat{\FF}^{\mf a}_{(l,m)}$ of $\FF^{\mf a}_{(l,m)}$ corresponding to $\widehat{\FF}_{(l,m)}$, and a $\Q(q)$-linear isomorphism
$\rho : \widehat{\FF}^{\mf a}_{(l,m)} \longrightarrow \widehat{\FF}_{(l,m)}
$
given by $\rho(F^{\mf a}_\la)=F_\la$, for $\la\in\La_{\hZ}^+$ with ${\rm sgn}(\la)\in \Sigma_{l,m}$.
We can assemble these linear isomorphisms for various $l,m$ into one $\Q(q)$-linear isomorphism
$$
\rho: \widehat{\FF}^{n, \mf a} := \bigoplus_{l+m=n}\widehat{\FF}^{\mf a}_{(l,m)} \longrightarrow \widehat{\FF}^n.
$$

In a similar way as in Section \ref{subsec:CB of mixed tensor}, we define an involution on $\widehat{\FF}^{\mf a}_{(l,m)}$, still denoted by $\psi_{\mf a}$, that is, $\psi_{\mf a}(u\otimes v)=
\lim_{k\rightarrow \infty}\Theta^{(k)}_{\mf a}(\psi^{(k)}_{\mf a}(u)\otimes \psi^{(k)}_{\mf a}(v))$, for $u\in \bigwedge^l\VV_-$ and $v\in \bigwedge^m\VV_+$. Here $\psi^{(k)}_{\mf a}(u)$ and $\psi^{(k)}_{\mf a}(v)$ denote the involutions on $\bigwedge^l\pi_k(\VV_-)$ and $\bigwedge^m\pi_k(\VV_+)$ induced from those on $\pi_k(\VV^{\otimes l}_-)$ and $\pi_k(\VV^{\otimes m}_+)$ with respect to $\Theta^{(k)}_{\mf a}$, which by \eqref{eq:psi=psiJ} coincide with $\psi^{(k)}(u)$ and $\psi^{(k)}(v)$ on $\pi_k({\TT}^l)$ and $\pi_k({\TT}^m)$, respectively. By \eqref{eq:Theta_J} and \eqref{eq:bar on F_lambda}, we have
$ %\begin{equation*}
\psi_{\mf a}(F^{\mf a}_\la) \in F^{\mf a}_\la + \sum_{\mu\succ \la} \Z[q,q^{-1}] F^{\mf a}_\mu.
$ %\end{equation*}
Hence there exists a unique topological $\Q(q)$-basis $\{\,U^{\mf a}_\la\,|\,\la\in\La_{\hZ}^+, {\rm sgn}(\la)\in \Sigma_{l,m}\,\}$ of $\widehat{\FF}^{\mf a}_{(l,m)}$ such that $\psi_{\mf a}(U^{\mf a}_\la)=U^{\mf a}_\la$ and
\begin{equation*}
U^{\mf a}_\la=\sum_{\mu\in\La_{\hZ}^+}{u^{\mf a}_{\mu\la}(q)}F^{\mf a}_\mu,
\end{equation*}
with $u^{\mf a}_{\la\la}(q)=1$, $u^{\mf a}_{\mu\la}(q)\in q\Z[q]$ for $\mu\succ\la$, and $u^{\mf a}_{\mu\la}(q)=0$ otherwise.

\begin{thm}\label{thm:ACB=CCB}
The isomorphism $\rho: \widehat{\FF}^{n, \mf a} \rightarrow \widehat{\FF}^n$ preserves the canonical bases, that is,
\begin{equation*}
\rho(U^{\mf a}_\la) = U_\la,  \quad \text{ for } \la\in\La_{\hZ}^+.
\end{equation*}
In particular, we have $u^{\mf a}_{\mu\la}(q)=u_{\mu\la}(q)$, for $\la, \mu \in\La_{\hZ}^+$.
\end{thm}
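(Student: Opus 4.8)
The plan is to reduce the statement about the $q$-wedge spaces to the already-established comparison of canonical bases on the tensor modules in Proposition~\ref{thm:head:can:basis} and Corollary~\ref{cor:ACB=CCB}, via the projection formula \eqref{can:in:quot}. First I would fix $l, m$ with $l+m=n$ and recall the key diagram: on the tensor side we have $\pi(T_{w_0\la}) = U_\la$ for $\la \in \La_\hZ^+$ with $\mathrm{sgn}(\la) \in \Sigma_{l,m}$, and a corresponding type-$A$ statement $\pi_{\mf a}(T^{\mf a}_{w_0^{\mf a}\la}) = U^{\mf a}_\la$ for the wedge space $\FF^{\mf a}_{(l,m)}$, where $w_0^{\mf a}$ is the appropriate longest element. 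The subtlety is that the longest element $w_0$ in $\mf S_n$ does not respect the block decomposition $\TT^n = \bigoplus_{\bf s}\TT^n_{\bf s}$: applying $w_0$ sends $M_\la$ with $\mathrm{sgn}(\la) = {\bf s}$ to $M_{w_0\la}$ with $\mathrm{sgn}(w_0\la) = {\bf s}^{\mathrm{rev}}$, the reversal of $\bf s$. In particular $w_0$ maps the block ${\bf s}_{\min}(l,m) = (-^l, +^m)$ to ${\bf s}_{\max} = (+^m, -^l)$, not back to ${\bf s}_{\min}$. So Corollary~\ref{cor:ACB=CCB}, which gives $T_\la = T^{\mf a}_\la$ only on the ${\bf s}_{\min}$ block, is not directly the relevant statement; rather one needs $T_{w_0\la}$ for $\la$ in the ${\bf s}_{\min}$ block, i.e.\ $T_\mu$ for $\mu$ in the ${\bf s}_{\max}$ block.

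The way I would handle this is to invoke the full strength of Proposition~\ref{thm:head:can:basis}, not just the $\mathbf{s}_{\min}$ corollary: for \emph{any} $\mathbf{s} \in \Sigma_{l,m}$ and $\mu$ with $\mathrm{sgn}(\mu) = \mathbf{s}$, the projection $\pi_{\mathbf{s}}(T_\mu) = T^{\mathfrak a}_\mu$. Now $\pi(M_{w_0\la}) = F_\la$ for $\la \in \La_\hZ^+$, and more generally, modulo $\widehat{\KK}^n$ and modulo the lower filtration pieces in \eqref{eq:filtration}, the element $T_{w_0\la}$ maps to $U_\la$ on one side. On the type-$A$ side, I would set up a parallel projection: $T^{\mf a}_{w_0\la}$ (where $w_0\la$ lies in the $\mathbf{s}_{\max}$-block $\TT^n_{\mathbf{s}_{\max}} \cong \VV_+^{\otimes m}\otimes\VV_-^{\otimes l}$) projects to the type-$A$ canonical wedge-basis element in $\bigwedge^m\VV_+ \otimes \bigwedge^l\VV_-$, and then identify this with $U^{\mf a}_\la$ living in $\bigwedge^l\VV_-\otimes\bigwedge^m\VV_+$. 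The identification of these two orderings of the wedge factors is itself a canonical-basis-preserving isomorphism coming from the braiding / the fact that $\VV_+^{\otimes m}\otimes \VV_-^{\otimes l}$ and $\VV_-^{\otimes l}\otimes\VV_+^{\otimes m}$ are related through the filtration \eqref{eq:filtration} with $\psi$-invariant steps, and canonical bases are determined uniquely by bar-invariance plus unitriangularity.

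Concretely, the key steps in order: (1)~Establish that $\widehat{\KK}^n$ is $\psi_{\mf a}$-invariant on each block $\widehat{\TT}^n_{\bf s}$ (analogue of \cite[Lemma 3.4]{Br2} for the type-$A$ bar involution), so that the type-$A$ wedge bar involution $\psi_{\mf a}$ on $\widehat{\FF}^{\mf a}_{(l,m)}$ is compatible under $\rho$ with a bar involution on $\widehat{\FF}_{(l,m)}$. (2)~Show $\rho$ intertwines $\psi_{\mf a}$ on $\widehat{\FF}^{\mf a}_{(l,m)}$ with the restriction of the type-$C$ bar involution $\overline{\phantom{x}}$ to $\widehat{\FF}_{(l,m)}$; this is where Proposition~\ref{thm:head:can:basis} enters, because the bar involution on the wedge space is induced from the one on the tensor space, and the projections $\pi$ and $\pi_{\bf s}$ are compatible. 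The cleanest route is to show $\rho(\psi_{\mf a}(F^{\mf a}_\la)) = \overline{F_\la}$ for all $\la$, by lifting to $\widehat{\TT}^n$: choose $M_{w_0\la}$ in the $\mathbf{s}_{\max}$-block, apply \eqref{eq:psi=psiJ} (the identity $\psi(M + \widehat{\TT}^n_{<\bf s}) = \psi_{\mf a}(M) + \widehat{\TT}^n_{<\bf s}$) with $\mathbf{s} = \mathbf{s}_{\max}$ — note $\mathbf{s}_{\max}$ is the \emph{top} of the filtration \eqref{eq:filtration}, so $\widehat{\TT}^n_{\le \mathbf{s}_{\max}} = \widehat{\TT}^n$ restricted to sign-type in $\Sigma_{l,m}$ and $\widehat{\TT}^n_{< \mathbf{s}_{\max}}$ is a genuine proper subspace — then project to $\widehat{\FF}^n$ and use that the wedge relations \eqref{eq:wedge_rel_C}, restricted to the relevant sign patterns, degenerate to the type-$A$ wedge relations. (3)~Conclude by the uniqueness characterization of the canonical basis: $\rho(U^{\mf a}_\la)$ is bar-invariant (by step 2) and lies in $F_\la + \sum_{\mu \succ \la} q\Z[q] F_\mu$ (since $\rho$ preserves the $F$-bases and $u^{\mf a}_{\mu\la}(q) \in q\Z[q]$), hence equals $U_\la$ by the defining property \eqref{eq:CB on Fock}. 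Then $u^{\mf a}_{\mu\la}(q) = u_{\mu\la}(q)$ follows by comparing coefficients.

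The main obstacle I anticipate is step~(2): carefully matching up the bar involutions through $\rho$. The issue is genuinely the discrepancy between $w_0$ and the block structure, together with the two orderings $\VV_-^{\otimes l}\otimes\VV_+^{\otimes m}$ versus $\VV_+^{\otimes m}\otimes\VV_-^{\otimes l}$ of the wedge factors. One must verify that the quasi-$\mathcal R$-matrix $\Theta^{(k)}_{\mf a}$ used to define $\psi^{(k)}_{\mf a}$ on the wedge space $\bigwedge^l\VV_-\otimes\bigwedge^m\VV_+$ produces the same answer, after applying $\rho$ and lifting via $\pi$, as the restriction of $\Theta^{(k)}$ to the $\mathbf{s}_{\max}$-block followed by projection — and this requires the observation (already used in the proof of Proposition~\ref{thm:head:can:basis}) that for $\nu \notin Q^+_{\mf a}$ the components $\Theta^\pm_\nu$ involve $E_0$ or $F_0$, which annihilate the relevant factors $\VV_+$ and $\VV_-$ respectively, so only the $Q^+_{\mf a}$-part survives on each block. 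Once this is in place the rest is a routine uniqueness argument, entirely parallel to \cite[Theorem 3.16]{Br2} and \cite[Lemma 3.8]{Br2}.
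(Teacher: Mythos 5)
The critical issue is a sign-pattern error that leads you to invent a difficulty that does not arise. For $\la \in \La_{\hZ}^+$ with $\mathrm{sgn}(\la) \in \Sigma_{l,m}$, the entries $\la_1 > \cdots > \la_n$ are strictly decreasing and nonzero, so every positive entry precedes every negative one: $\mathrm{sgn}(\la) = {\bf s}_{\rm max}$, \emph{not} ${\bf s}_{\rm min}$, and therefore $\mathrm{sgn}(w_0\la) = {\bf s}_{\rm min}$. Corollary~\ref{cor:ACB=CCB} then applies directly to $T_{w_0\la}$; there is no block mismatch, and no need to compare $\bigwedge^m\VV_+\otimes\bigwedge^l\VV_-$ with $\bigwedge^l\VV_-\otimes\bigwedge^m\VV_+$. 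The paper's proof is exactly the short argument you discarded: since $\mathrm{sgn}(w_0\la)$ is minimal, $T_{w_0\la}=T^{\mf a}_{w_0\la}$, so by \eqref{can:in:quot} the projection $\pi(T_{w_0\la}) = U_\la$ coincides with $\pi(T^{\mf a}_{w_0\la})$, the $\psi_{\mf a}$-invariant canonical basis of $\widehat{\FF}_{(l,m)}$; and since $\rho$ commutes with $\psi_{\mf a}$ and respects $\succeq$ (Lemma~\ref{lem:Fa=F}), the uniqueness of the $\psi_{\mf a}$-canonical basis yields $\rho(U^{\mf a}_\la) = U_\la$.

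Even setting the sign confusion aside, your proposed step~(2) has a genuine gap. Applying \eqref{eq:psi=psiJ} with ${\bf s}={\bf s}_{\rm max}$ gives only the congruence $\psi(M_\mu)\equiv\psi_{\mf a}(M_\mu)\pmod{\widehat{\TT}^n_{<{\bf s}_{\rm max}}}$, and you then want to project to $\widehat{\FF}^n$, claiming the discrepancy dies. It does not: the wedge relations \eqref{eq:wedge_rel_C} preserve the multiset of signs but not their ordering, so $\pi$ maps every block $\TT^n_{\bf t}$ with ${\bf t}\in\Sigma_{l,m}$ into the \emph{same} summand $\FF_{(l,m)}$, and $\pi\bigl(\widehat{\TT}^n_{<{\bf s}_{\rm max}}\bigr)\neq 0$. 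A congruence modulo these lower filtration pieces therefore does not pass to an equality of bar involutions on the wedge quotient. What saves the argument in reality is that the tensor lifts $M_{w_0\la}$ of the standard wedges $F_\la$ sit in the \emph{bottom} block ${\bf s}_{\rm min}$, where $\widehat{\TT}^n_{<{\bf s}_{\rm min}}=0$ and \eqref{eq:psi=psiJ} is an equality on the nose --- precisely the content of Corollary~\ref{cor:ACB=CCB}, the statement you set aside as insufficient.
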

\begin{proof}
The involution $\psi_{\mf a}$ on $\widehat{\TT}^n_{\bf s}$ induces an involution on $\widehat{\FF}_{(l,m)}$ still denoted by $\psi_{\mf a}$, which defines the $\psi_{\mf a}$-invariant canonical basis of $\widehat{\FF}_{(l,m)}$, say, $\{\, U^{'\mf a}_\la \,|\,\la\in\La_{\hZ}^+,\ {\rm sgn}(\la)\in \Sigma_{l,m}\,\}$.

Let $\la\in\La_{\hZ}^+$ with ${\rm sgn}(\la)\in \Sigma_{l,m}$.
We have $\pi(T_{w_0\la})=U_{\la}$ by \eqref{can:in:quot}, and also $\pi(T^{\mf a}_{w_0\la})= U^{'\mf a}_\la$ by the same arguments.
Since ${\rm sgn}(w_0\la)$ is minimal in $\Sigma_{l,m}$, we have $T_{w_0\la}=T^{\mf a}_{w_0\la}$ by Corollary \ref{cor:ACB=CCB} and hence $U_{\la}=U^{'\mf a}_\la$.
On the other hand, $\rho$ commutes with $\psi_{\mf a}$ and preserves the Bruhat ordering $\succeq$ by Lemma \ref{lem:Fa=F}. Hence we have $\rho(U^{\mf a}_\la)=U^{'\mf a}_\la=U_\la$ by the uniqueness of the $\psi_{\mf a}$-invariant canonical basis of $\widehat{\FF}_{(l,m)}$.
\end{proof}

\begin{rem}
Theorem~\ref{thm:ACB=CCB} suggests some possible new connections between
representation theory of $\qn$ (in blocks ``of signature $\Sigma_{l,n-l}$") and representation theory of $\mathfrak{gl}(l|n-l)$, which will be very interesting to formulate precisely.
In case of $l=0$ or $l=n$, there are such connections developed by Frisk and Mazorchuk (see \cite{FM}).

A finite rank version of Theorem~\ref{thm:ACB=CCB} also holds for $\UU_q(\mf c_k)$ in place of $\UU =\UU_q(\mf c_\infty)$.
\end{rem}

%%%%
%%%%
\section{Kazhdan-Lusztig theory for  queer Lie superalgebra $\mf{q}(n)$}\label{sec:O}

\subsection{Representations of $\mf q(n)$}
We assume that the base field is $\C$.
Let $\mf{gl}(n|n)$ be the general linear Lie superalgebra of the complex superspace $\C^{n|n}$. Let $\{u_{\ov{1}},\ldots,u_{\ov{n}}\}$ be an ordered basis for the even subspace $\C^{n|0}$, and let  $\{u_1,\ldots,u_n\}$ be an ordered basis for the odd subspace $\C^{0|n}$ so that $\gl(n|n)$ is realized as $2n \times 2n$ complex matrices indexed by $I(n|n):=\{\,\ov{1}<\ldots<\ov{n}<1<\ldots<n\,\}$.
The subspace
\begin{equation}\label{eq:q(n)}
\G \equiv {\mf q}(n):=\left\{\,
\begin{pmatrix}
A & B \\
B & A
\end{pmatrix}
\,\Big\vert\,\text{$A$, $B$ : $n \times n$ matrices}\,\right\}
\end{equation}
forms a subalgebra of $\gl(n|n)$ called the {\em queer Lie superalgebra}.

Let $E_{ab}$ be the elementary matrix in $\gl(n|n)$ with $(a,b)$-entry $1$ and other entries $0$, for $a,b\in I(n|n)$. Then $\{\,e_{ij},\ov{e}_{ij}\,|\,1\leq i,j\leq n\,\}$ is  a linear basis of $\G$, where $e_{ij}=E_{\bar{i}\bar{j}}+E_{ij}$ and $\ov{e}_{ij}=E_{\bar{i}j}+E_{i\bar{j}}$.  The even subalgebra $\G_{\even}$ is spanned by $\{\,e_{ij}\,|\,1\leq i,j\leq n\,\}$, and isomorphic to the general linear Lie algebra $\gl(n)$.

Let $\h=\h_\even\oplus\h_\odd$ be the standard Cartan subalgebra of $\G$, with linear bases $\{\,h_i:=e_{ii}\,|\,\,1\leq i\leq n\,\}$ and $\{\,\ov{h}_i:=\ov{e}_{ii}\,|\,1\leq i\leq n\,\}$ of $\h_\even$ and $\h_\odd$, respectively.
Let $\{\,\varepsilon_i\,|\,1\leq i\leq n\,\}$ be the basis of $\h^\ast_{\even}$ dual to $\{\,h_i\,|\,1\leq i\leq n\,\}$. We define a symmetric bilinear form $(\ ,\ )$ on $\h_\even^\ast$ by $(\varepsilon_i,\varepsilon_j)=\delta_{ij}$, for $1\leq i,j\leq n$. We often identify $\h^*_\even$ with $\C^n$ by regarding $\varepsilon_i$ as the standard basis element in $\C^n$, for $1\leq i\leq n$.

Let $\mf b$ be the standard Borel subalgebra of $\G$, which consists of matrices of the form  \eqref{eq:q(n)} with $A$ and $B$ upper triangular.
Let $\Phi^+$ and $\Phi^-$ be the sets of positive and negative roots with respect to $\h_\even$, respectively. We have
$\Phi^-=-\Phi^+$, and $\Phi^+=\Phi^+_\even \sqcup \Phi^+_\odd$,
where $\Phi^+_{\even}$ and $\Phi^+_{\odd}$ denote the sets of positive even and odd roots, respectively. We have $\Phi^+_\even=\Phi^+_\odd =\{\,\varepsilon_i-\varepsilon_j\,|\,1\leq i<j\leq n\,\}$ ignoring the parity.

For a $\G$-module $V$ and $\mu\in \h^\ast_\even$, let $V_\mu=\{\,v\in V\,|\,h\cdot v=\mu(h)v \text{ for $h\in\h_\even$}\, \}$ denote its $\mu$-weight space. If $V$ has a weight space decomposition $V=\bigoplus_{\mu\in\h_\even^\ast}V_\mu$, its character is given as usual by $\text{ch}V=\sum_{\mu\in\h^*_\even}\dim V_\mu e^\mu$.

Let $\la=(\la_1,\ldots,\la_n) \in \h^*_\even$. We denote by $\ell(\la)$ the number of $i$'s with $\la_i\neq 0$.
Consider a symmetric bilinear form on $\h_\odd$ given by $\langle \cdot, \cdot \rangle_\la = \la([\cdot, \cdot])$, and let $\h'_\odd$ be a maximal isotropic subspace associated to $\langle \cdot, \cdot \rangle_\la$. Put $\h'=\h_\even \oplus \h'_\odd$. Let $\C v_\la$ be the one-dimensional $\h'$-module with $h\cdot v_\la = \la(h) v_\la$ and $h'\cdot v_\la=0$, for $h\in \h_\even$ and $h'\in \h'_\odd$. Then $W_\la:={\rm Ind}_{\h'}^\h \C v_\la$ is an irreducible $\h$-module of dimension $2^{\lceil \ell(\la)/2\rceil}$, where $\lceil\cdot\rceil$ denotes the ceiling function. We put $\Delta(\la):={\rm Ind}_{\mf b}^{\G}W_\la$ called a Verma module, where $W_\la$ is extended to a ${\mf b}$-module in a trivial way, and its character is given by
\begin{equation*}
{\rm ch}\Delta(\la)=2^{\lceil \ell(\la)/2 \rceil}
e^\la D^{-1},
\end{equation*}
where
\begin{equation*}
%\begin{split}
D:=\prod_{\alpha\in\Phi^+_{\bar 0}}(e^{\alpha/2}-e^{-\alpha/2})\Big/ \prod_{\alpha\in\Phi^+_{\bar 1}}(e^{\alpha/2}+e^{-\alpha/2}).
%\end{split}
\end{equation*}

We define $L(\la)$ to be the unique irreducible quotient of $\Delta(\la)$. Note that it is an $\h_\even$-semisimple $\mf b$-highest weight $\G$-module with highest weight $\la$.
%A positive root $\varepsilon_i-\varepsilon_j$ ($i<j$) is atypical to $\la$ if $\la_i+\la_j= 0$.
We say that $\la$ is {\em atypical} if  $\la_i+\la_j= 0$ for some $1\le i<j \le n$, and {\em typical} otherwise \cite{Pe}.

\subsection{Categories of $\G$-modules}

Let $\mc O_{n}$ denote the BGG category of finitely generated $\G$-modules which are locally finite $\mf b$-modules and semisimple $\h_\even$-modules. The set $\{\,L(\la)\,|\,\la\in\h^*_\even\,\}$ is the complete set of irreducible $\G$-modules in $\mc O_n$ up to isomorphism.

Let $\mc O^{\Delta}_{n}$ be the full subcategory of $\mc O_{n}$ consisting of $M$ having a Verma flag of finite length, that is, a filtration $0=M_0\subseteq M_1 \subseteq \cdots \subseteq M_r=M$ such that $M_i/M_{i-1}$ is a Verma module for $1\leq i\leq r$.
We denote by $(M:\Delta(\la))$ the multiplicity of $\Delta(\la)$ in a filtration of $M$. Let $T(\la)$ be the tilting module of $\mf g$ associated to $\la\in\h^*_\even$, which is the unique indecomposable module in $\mc O_{n}$ such that
(i) ${\rm Ext}_{\mc O_{n}}(\Delta(\mu),T(\la))=0$ for all $\mu\in\h^*$, (ii) $T(\la)$ has a Verma flag with $\Delta(\la)$ at the bottom. Indeed, we have $T(\la)\in \mc O^\Delta_{n}$. (See \cite[Example 7.10]{Br3} for more details.)

Let $\mc E_n$ be the category of finite-dimensional $\G$-modules. Let $\La^+$ be the set of $\la=(\la_1,\ldots,\la_n)\in\h^*_\even$ such that $\la_i\geq \la_{i+1}$ and $\la_i=\la_{i+1}$ implies $\la_i=\la_{i+1}=0$ for $1\leq i\leq n-1$. Then for $\la\in\h^*_\even$, we have $L(\la)\in \mc E_n$ if and only if $\la\in \La^+$ (see \cite[Theorem 4]{Pe}, \cite[Theorem 2.18]{CW}).

We denote by $K(\mc C)$ the Grothendieck group of a module category $\mc C$ spanned by  the equivalence classes $[M]$ for $M\in \mc C$.

For $\la=(\la_1,\ldots,\la_n)\in \La^+$, let $\mf l(\la)$ be the subalgebra spanned by $\mf h$ and the root spaces $\G_{\pm \alpha}$, for $\alpha\in \Phi^+(\la):=\{\,\varepsilon_i-\varepsilon_j\,|\,\, \la_i=\la_j\ (i< j)\,\}$, and let $\mf p(\la)=\mf l(\la) + \mf b$ be its parabolic subalgebra. Note that $\mf l(\la)=\mf h$ and $\mf p(\la)=\mf b$, for $\la\in \La^+_{\hZ}$.
Let
\begin{equation*}
E(\la):=\sum_{i\ge 0}(-1)^i\mc L_i\left({\rm Ind}_{\mf p(\la)}^\G W_\la\right)
\end{equation*}
be the Euler characteristic of ${\rm Ind}_{\mf p(\la)}^\G W_\la$,
where $W_\la$ is regarded as an irreducible $\mf l(\la)$-module, and $\mc L_i$ is the $i$th derived functor of the dual Zuckerman functor with respect to the pair $\mf l(\la)\subseteq \G$ \cite[Section 4]{San}.
In general, $E(\la)$ is a virtual $\G$-module, and its character is given by
\begin{equation}\label{eq:Euler character}
{\rm ch}E(\la)=2^{\lceil \ell(\la)/2 \rceil}D^{-1}\sum_{w\in \mathcal S_n}(-1)^{\ell(w)}w\left(\frac{e^\la}{\prod_{\beta\in \Phi^+(\la)}(1+e^{-\beta})} \right),
\end{equation}
(see \cite{PS2, CK}).
In particular, we have $E(\la)=L(\la)$, for typical $\la$.
It is well known that
\begin{equation*}
\left\{\,[E(\la)]\,\big\vert \,\la\in\La^+\,\right\},\quad \left\{\,[L(\la)]\,\big\vert \,\la\in\La^+\,\right\}
\end{equation*}
are $\Z$-bases of $K(\mc E_{n})$.
We remark that the first algorithm for computing $a_{\la\mu}$ in
\begin{equation}\label{eq:euler multiplicity}
[E(\la)]=\sum_{\mu}a_{\la\mu}[L(\mu)]\quad (\la\in\La^+),
\end{equation}
and hence an algorithm of computing the characters of all finite-dimensional irreducible $\G$-modules was developed earlier in  \cite{PS2}.

%%%%%
\subsection{Character formula for finite-dimensional half-integer weight modules}

For a $\G$-module $V$, we say that $V$ is an {\em integer weight module}  if it has a weight space decomposition $V=\bigoplus_{\mu\in\La_{\Z}}V_\mu$, and $V$ is an {\em half-integer weight module} if $V=\bigoplus_{\mu\in\La_{\hZ}}V_\mu$.

Let $\mc O_{n,\Z}$ and $\mc O_{n,\hZ}$ denote the full subcategories of $\mc O_n$ consisting of integer weight and half-integer weight modules, respectively. Set
\begin{equation*}
\begin{split}
\mc E_{n,\Z}:=\mc O_{n,\Z}\cap \mc E_n,\quad
\mc E_{n,\hZ}:=\mc O_{n,\hZ}\cap \mc E_n.
\end{split}
\end{equation*}
Then $L(\la)\in \mc O_{n,\Z}$ (respectively, $\mc O_{n,\hZ}$) is finite dimensional if and only if $\la\in \La^+_\Z$ (respectively, $\La^+_{\hZ}$).
The sets
\begin{equation*}
\left\{\,[X(\la)]\,\big\vert \,\la\in\La^+_\Z\,\right\},\quad \left\{\,[X(\la)]\,\big\vert \,\la\in\La^+_\hZ\,\right\}  \quad (X=E, L)
\end{equation*}
are $\Z$-bases of $K(\mc E_{n,\Z})$ and $K(\mc E_{n,\hZ})$, respectively.

We do not know the answer to the following two questions (which are closely related to and essentially equivalent to each other).

\begin{question}
 \label{q:HWC}
\begin{enumerate}
\item
Is the category $\mc E_{n,\hZ}$ a highest weight category?

\item
Is it true that $\mc L_i\big({\rm Ind}_{\mf b}^\G W_\la\big) =0$, for all $i>0$ and all $\la \in \La^+_{\hZ}$?
\end{enumerate}
\end{question}

\begin{rem}
The work \cite{F} (also cf.~\cite{FM}) shows that any (strongly) typical block in $\mc E_{n,\hZ}$ is a highest weight category.  A referee suggests that an affirmative answer to Question~\ref{q:HWC}(1) might likely follow from this by applying translation functors.
%\red{As a typical block in this case is automatically ``strongly typical,'' it follows from \cite{F} or \cite{FM} that the answer to first question above is indeed positive. For the other blocks it is expected that one should be able to answer the question by translating from typical blocks by means of projective functors. We are grateful to one of the referees for this comment.}
\end{rem}

Recall that Brundan gave a realization of $K(\mc E_{n,\Z})$ and a character formula for $L(\la)\in \mc E_{n,\Z}$ in terms of the (dual) canonical basis of type $B$ \cite{Br2} (see Theorem \ref{thm:Brundan Main}).
Now, let us consider the case of $\mc E_{n,\hZ}$. Put
\begin{equation*}
\begin{split}
&\EE^{n}_{\Z[q^{\pm}]}:=\bigoplus_{\la\in \La^+_\hZ}\Z[q,q^{-1}]L_\la
=\bigoplus_{\la\in \La^+_\hZ}\Z[q,q^{-1}]E_\la,\\
&\EE^{n}_{\Z}:=\Z\otimes_{\Z[q,q^{-1}]}\EE^{n}_{\Z[q^{\pm}]},
\end{split}
\end{equation*}
where $\{\,L_\la\,|\,\la\in\La^+_\hZ\,\}$ and $\{\,E_\la\,|\,\la\in\La^+_\hZ\,\}$
are the bases of the $\UU$-module $\EE^n$ given in Section \ref{subsec:CB of Fock C}, and $\Z$ is the right $\Z[q,q^{-1}]$-module with $q$ acting  as $1$.
Let $E_\la(1)=1\otimes E_\la$ and $L_\la(1)=1\otimes L_\la\in \EE^{n}_{\Z}$, for $\la\in \La^+_\hZ$.
Define a $\Z$-linear isomorphism  by
\begin{align}
\label{Psi}
\Psi_e:K(\mc E_{n,\hZ}) \longrightarrow \EE^{n}_\Z, \qquad
[E (\la)] \mapsto E_{\la}(1) \quad (\la\in\La^+_\hZ).
\end{align}

 \begin{thm}\label{thm:Main}
 We have $\Psi_e\left([L(\la)]\right)={L}_{\la}(1)$, for $\la\in\La^+_\hZ$. Equivalently, we have
\begin{equation*}
\begin{split}
[E(\la)]
&=\sum_{\mu\in\La^{+}_{\hZ},\, \mu\preceq \la}u_{-w_0\la\,-w_0\mu}(1) [L(\mu)],\\
[L(\la)]
&=\sum_{\mu\in\La^{+}_{\hZ},\, \mu\preceq \la}\ell_{\mu\la}(1) [E(\mu)].
\end{split}
\end{equation*}
\end{thm}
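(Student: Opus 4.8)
The plan is to identify $K(\mc E_{n,\hZ})$ with the $\UU$-submodule $\EE^n_\Z$ via $\Psi_e$ and prove that $\Psi_e$ intertwines the action of translation functors with the Chevalley generators of $\UU$ at $q=1$. This is the half-integer analogue of Brundan's argument for $\mc E_{n,\Z}$ in \cite{Br2}, and the key computational input is Lemma~\ref{lem:action of U on E-basis}, which tells us exactly how $E_i$ and $F_i$ act on the $E_\la$-basis.

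First I would introduce the translation functors on $\mc O_{n,\hZ}$ (or on $\mc E_{n,\hZ}$): tensoring with the natural $\qn$-module and its dual, followed by projection onto the appropriate central character / generalized eigenspace, decomposed further according to which coordinate $\la_i$ is being raised or lowered to produce $\delta_{r-\hf}-\delta_{r+\hf}$-type shifts in weight. I would check, on the level of Grothendieck groups, that $[E_i \cdot] $ and $[F_i \cdot]$ act on the classes $[\Delta(\la)]$ (equivalently on $[E(\la)]$, using the character formula \eqref{eq:Euler character}) by the same combinatorial rule — adding or removing a box $\varepsilon_r$ subject to the weight and dominance conditions — as in Lemma~\ref{lem:action of U on E-basis} with $q$ specialized to $1$. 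This is the standard ``translation functors categorify the tensor module'' computation; for $\qn$ the relevant bookkeeping over $\Phi^+(\la)$ and the $2^{\lceil \ell(\la)/2\rceil}$ factors has to be tracked carefully but is mechanical once one knows the Euler characters. Then $\Psi_e$ becomes an isomorphism of $\UU|_{q=1}$-modules.

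Next I would invoke the characterization of $L_\la$ from Section~\ref{subsec:CB of Fock C}: $\{L_\la\}$ is the unique bar-invariant basis of $\widehat{\EE}^n$ with $L_\la \in E_\la + \sum_{\mu}q^{-1}\Z[q^{-1}]E_\mu$, and it satisfies \eqref{formula:U}. Under $\Psi_e$, the classes $[L(\la)]$ are characterized inside $K(\mc E_{n,\hZ})$ by (i) unitriangularity with respect to $[E(\mu)]$ in the Bruhat order — which holds by \eqref{eq:euler multiplicity} and the known linkage/upper-triangularity of Euler characteristics for $\qn$ — and (ii) a ``self-duality'' coming from a contravariant duality functor $\vee$ on $\mc E_{n,\hZ}$ fixing the $[L(\mu)]$ and acting on $[E(\mu)]$ in a way matching the bar involution on $\EE^n_\Z$. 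So the heart of the proof is to match these two characterizations: show that $\Psi_e$ is compatible with the bar involution (i.e. $\Psi_e([M^\vee]) = \overline{\Psi_e([M])}$ after suitable normalization), using that both $\overline{\phantom{x}}$ and the duality are determined by their effect on the respective standard objects together with the module structure over $\UU$ / the translation functors, which we've already matched. Uniqueness of the canonical/dual-canonical basis then forces $\Psi_e([L(\la)]) = L_\la(1)$, and the two displayed formulas follow from \eqref{formula:E} and \eqref{formula:U} by specializing $q=1$.

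The main obstacle I anticipate is \emph{not} the Lie-theoretic translation functor computation (that is routine, modulo care with the odd reflections and the parity subtleties inherent to $\qn$), but rather establishing the bar-involution compatibility rigorously — i.e.\ producing a genuine duality functor on $\mc E_{n,\hZ}$ whose effect on Grothendieck classes is controlled, and knowing that the transition matrix $[E(\la)] = \sum a_{\la\mu}[L(\mu)]$ is \emph{unitriangular} with respect to $\succeq$ with the ``correct'' triangularity direction. If $\mc E_{n,\hZ}$ were known to be a highest weight category (Question~\ref{q:HWC}), this would be immediate from the general theory of tilting modules and Ringel duality, exactly as in \cite{Br2}; absent that, one must either borrow the needed upper-triangularity from \cite{PS2} and the known structure of typical blocks (cf.\ \cite{F, FM}), or import it from the integer-weight case via the comparison with \cite{Br2} established in \cite{CK}. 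I would route the argument through the last option: use the results of \cite{CK} relating $[L(\la)]$ for $\la \in \La^+_\hZ$ to integer-weight characters, together with Brundan's theorem (Theorem~\ref{thm:Brundan Main}) and Proposition~\ref{thm:ACB=BCB}, to deduce the unitriangularity and self-duality, and then close the argument by the uniqueness characterization of $L_\la$.
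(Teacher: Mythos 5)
Your fallback route in the last paragraph---importing the answer from the integer-weight case of \cite{CK} and \cite{Br2} via the type $B$/$C$ canonical basis comparison---is indeed the paper's proof, but you have over-engineered it. There is no need to establish unitriangularity, construct a duality functor on $\mc E_{n,\hZ}$, match a bar involution, and then appeal to a uniqueness characterization of $L_\la$. The paper simply computes the Euler multiplicities $a_{\la\mu}$ of \eqref{eq:euler multiplicity} by a three-step chain of equalities: $a_{\la\mu}=a_{\la^\sharp\mu^\sharp}$ by \cite[Theorem 5.3]{CK}, then $a_{\la^\sharp\mu^\sharp}=u_{-w_0\la^\sharp\,-w_0\mu^\sharp}(1)$ by Brundan's Theorem~\ref{thm:Brundan Main}, then $u_{-w_0\la^\sharp\,-w_0\mu^\sharp}(t)=u_{-w_0\la\,-w_0\mu}(t^2)$ by Corollary~\ref{cor:dual CB for B = dual CB for C} (which is where your Proposition~\ref{thm:ACB=BCB} enters, together with Theorem~\ref{thm:ACB=CCB}). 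Comparing with the definition \eqref{formula:E} of the $E$-basis and its inverse \eqref{formula:U} immediately yields $\Psi_e([L(\la)])=L_\la(1)$; nothing else is needed.

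Your primary route---the direct translation-functor argument internal to $\mc E_{n,\hZ}$---has exactly the gap you diagnose: the self-duality/bar-compatibility step you want would need $\mc E_{n,\hZ}$ to be a highest weight category, which is the open Question~\ref{q:HWC}. The paper's remark after the theorem claims a direct imitation of Brundan's own argument is possible, but it is not the argument you sketch, and the paper does not carry it out; the shortcut through \cite{CK} and the appendix results is designed precisely to sidestep this. So: discard the translation-functor and duality machinery for the purposes of this theorem, keep only the three-step multiplicity computation, and make sure to invoke Corollary~\ref{cor:dual CB for B = dual CB for C} (not just Proposition~\ref{thm:ACB=BCB}) so that you pass from the type $B$ coefficient at the shifted weights $\la^\sharp,\mu^\sharp$ all the way to the type $C$ coefficient at $\la,\mu$.
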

\begin{proof}
For $\la,\mu\in \La^+$, let $a_{\la\mu}$ be as in \eqref{eq:euler multiplicity}.
For $\la, \mu\in \La^+_\hZ$, we have
\begin{equation*}
\begin{split}
a_{\la\mu}
&=a_{\la^\natural\mu^\natural} \quad\quad\quad\quad\quad\quad\ \ \text{by \cite[Theorem 5.3]{CK}}\\
&=u_{-w_0\la^\natural\,-w_0\mu^\natural}(1) \,\quad\quad\text{by \cite[Theorem 4.52]{Br2} }\\
&=u_{-w_0\la\,-w_0\mu}(1) \quad\quad\ \ \, \text{by Corollary \ref{cor:dual CB for B = dual CB for C}.}
\end{split}
\end{equation*}
Therefore, we have $\Psi_e([L(\la)])=L_\la(1)$, and the formula for $[L(\la)]$, for $\la\in\La^+_\hZ$.
\end{proof}

\begin{rem}{\rm
Theorem \ref{thm:Main} can be also proved directly by imitating the arguments of \cite[Theorem 4.52]{Br2} for $\mc E_{n,\Z}$, without using  \cite[Theorem~ 4.52]{Br2} and \cite[Theorem~ 5.3]{CK}.
Our ``shortcut" approach here has the advantage in revealing connections between canonical bases of types $A$, $B$, and $C$, which suggests some possible connections between the associated module categories \cite[Remarks 6.5 and 6.7]{CK} (see also~\cite{FM}).}
\end{rem}

%%%%%%%
\subsection{Translation functors on $\mc O_{n,\hZ}$}

For $\la\in\h^*$, let $\chi_\la$ denote the central character of $L(\la)$ (see, e.g., \cite[Section 2.3]{CW} for more details).
The central characters given  by Sergeev \cite{Sv} admit the following characterization in terms of the weight function \eqref{eq:wt}
(cf.~\cite[Theorem 4.19]{Br2}).

\begin{lem}\label{lem:central character}
Let $\la, \mu\in \La_{\hZ}$. Then we have $\chi_\la=\chi_\mu$ if and only if ${\rm wt}(\la)={\rm wt}(\mu)$.
\end{lem}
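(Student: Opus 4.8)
\textbf{Proof plan for Lemma~\ref{lem:central character}.}
The plan is to reduce the statement to Sergeev's explicit description of the central characters of $\mf{q}(n)$ and to read off the equivalence through the weight function \eqref{eq:wt}. First I would recall the linkage class description: by Sergeev (see also \cite[Section 2.3]{CW}), two highest weights $\la,\mu$ yield the same central character precisely when the multisets
$$\{\,|\la_1|,\ldots,|\la_n|\,\}\setminus\{0\} = \{\,|\mu_1|,\ldots,|\mu_n|\,\}\setminus\{0\}$$
coincide, together with a matching of the zero coordinates (equivalently, after reordering, $|\la_i|=|\mu_i|$ for all $i$); this is the content of the statement that the central character depends only on the ``content'' of $\la$. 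The key observation is then that since $\la,\mu\in\La_{\hZ}$, \emph{no coordinate is zero}, so the condition collapses to: there is a permutation matching $|\la_i|$ with $|\mu_i|$ and, for each value $c\in\hZ_+$, the number of $i$ with $\la_i=c$ plus (possibly) sign data is determined by that multiset.

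Second, I would translate this into the language of $\mathrm{wt}$. By definition $\mathrm{wt}(\la)=\sum_{i=1}^n\delta_{\la_i}\in P$, and since we set $\delta_{-r}=-\delta_r$ for $r\in\hZ_+$, the coefficient of $\delta_c$ (for $c\in\hZ_+$) in $\mathrm{wt}(\la)$ is exactly
$$\#\{\,i \mid \la_i=c\,\} - \#\{\,i \mid \la_i=-c\,\}.$$
Thus $\mathrm{wt}(\la)=\mathrm{wt}(\mu)$ says these signed multiplicities agree for every $c\in\hZ_+$. The forward implication $\chi_\la=\chi_\mu\Rightarrow\mathrm{wt}(\la)=\mathrm{wt}(\mu)$ should follow because the central character in fact remembers the signed multiplicities for half-integer weights (this is already implicit in the parametrization of blocks of $\mc O_{n,\hZ}$ used throughout Section~\ref{sec:O}, and in \cite[Theorem 4.19]{Br2} for the type $B$ analogue). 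For the converse, if the signed multiplicities agree, then a fortiori the multiset $\{|\la_i|\}=\{|\mu_i|\}$ agrees, so $\la$ and $\mu$ lie in the same Sergeev linkage class, hence $\chi_\la=\chi_\mu$.

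The one genuine subtlety — and the step I expect to be the main obstacle — is the forward direction, because Sergeev's raw linkage condition a priori only controls the \emph{unsigned} multiset $\{|\la_i|\}$, not the signed multiplicities that $\mathrm{wt}$ records; one must rule out that two weights with, say, $\la=(\tfrac32,-\tfrac32,\ldots)$ and $\mu=(\tfrac32,\tfrac32,\ldots)$ could share a central character. For half-integer weights this cannot happen: since $\la_i+\la_j$ can never vanish when all $\la_i\in\hZ_+$ but $\la_i-\la_j$ can, the relevant linkage (and the associated ``$\pm$-parity'' invariant of the block) does distinguish the signs, which is precisely why the type $B$ combinatorics of \cite{Br2} governs $\mc O_{n,\hZ}$. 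I would therefore cite \cite{Sv} together with the block decomposition already in force (or appeal directly to the half-integer analogue of \cite[Theorem 4.19]{Br2}) to pin down the signed multiplicities, and then the equivalence with $\mathrm{wt}(\la)=\mathrm{wt}(\mu)$ is the bookkeeping identity above.
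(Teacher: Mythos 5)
Your proposal misstates Sergeev's description of the central characters, and this error creates a phantom difficulty that you then attempt to resolve circularly.  Sergeev's theorem for $\mathfrak q(n)$ (see \cite{Sv}) does \emph{not} say that $\chi_\la$ depends only on the unsigned multiset $\{|\la_1|,\ldots,|\la_n|\}\setminus\{0\}$.  Rather, the Harish-Chandra image of $Z(U(\mathfrak q(n)))$ is the algebra $\Gamma_n$ of $S_n$-invariant polynomials $f$ satisfying the supersymmetry condition (setting $\la_j=-\la_i$ makes $f$ independent of $\la_i$), and this algebra is generated by the odd power sums $p_{2k+1}(\la)=\sum_i\la_i^{2k+1}$, $k\ge 0$.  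In particular the central character is \emph{not} determined by the unsigned multiset: for example $\la=(\tfrac32,-\tfrac32)$ and $\mu=(\tfrac52,-\tfrac52)$ satisfy $p_{2k+1}(\la)=p_{2k+1}(\mu)=0$ for all $k$, hence $\chi_\la=\chi_\mu$, even though $\{|\la_i|\}\ne\{|\mu_i|\}$.  So your premise is wrong, and the ``subtlety'' you identify is not the actual content of the lemma.

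Because of this, your patch for the forward direction --- ``appeal directly to the half-integer analogue of \cite[Theorem 4.19]{Br2}'' --- is essentially citing the lemma you are trying to prove, which is circular.  The correct argument, which is the one the paper has in mind when it cites \cite{Sv}, is short and symmetric in both directions once you use the odd power sums.  Writing $m_c(\la):=\#\{i\mid\la_i=c\}-\#\{i\mid\la_i=-c\}$ for $c\in\hZ_+$, one has
\[
p_{2k+1}(\la)=\sum_{c\in\hZ_+}c^{2k+1}\,m_c(\la),
\]
and $\mathrm{wt}(\la)=\sum_{c\in\hZ_+}m_c(\la)\,\delta_c$ since $\delta_{-c}=-\delta_c$ and no $\la_i$ vanishes.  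If $\mathrm{wt}(\la)=\mathrm{wt}(\mu)$ then $m_c(\la)=m_c(\mu)$ for all $c$, so all odd power sums agree and $\chi_\la=\chi_\mu$.  Conversely, if $\chi_\la=\chi_\mu$ then $\sum_c c^{2k+1}\bigl(m_c(\la)-m_c(\mu)\bigr)=0$ for all $k\ge 0$; since only finitely many $m_c$ are nonzero and the $c^{2}$ are distinct positive numbers, a Vandermonde argument forces $m_c(\la)=m_c(\mu)$ for all $c$, i.e.\ $\mathrm{wt}(\la)=\mathrm{wt}(\mu)$.  You should replace the ``unsigned multiset'' discussion and the circular appeal with this direct computation.
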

We have by Lemma \ref{lem:central character} that
\begin{equation}\label{eq:block decomp O}
\mc O_{n,\hZ}=\bigoplus_{\gamma\in P}\mc O_\gamma,
%K(\mc O_{n,\hZ})=\bigoplus_{\gamma\in P}K(\mc O_\gamma),
\end{equation}
where $\mc O_\gamma$ is the full subcategory of modules in $\mc O_{n,\hZ}$ whose simple subquotient are $L(\la)$'s such that ${\rm wt}(\la)=\gamma$.
We let ${\rm pr}_\gamma : \mc O_{n,\hZ} \rightarrow \mc O_\gamma$ be the natural projection functor for $\gamma\in P$.

Let $V=\C^{n|n}$ be the natural representation of $\G$, and $V^*$ be its dual. For $r\geq 1$, let $S^r(V)$ and $S^r(V^*)$ denote the $r$th supersymmetric tensor of $V$ and $V^*$, respectively, which are irreducible $\G$-modules (see~\cite[Proposition 3.1]{CW2}).
 Given $M\in \mc O_\gamma$, we define
\begin{equation*}\label{eq:translation}
{\rm E}^{(r)}_i M:={\rm pr}_{\gamma+r\alpha_i}(M\otimes S^r(V^*)),\quad \quad
{\rm F}^{(r)}_i M:={\rm pr}_{\gamma-r\alpha_i}(M\otimes S^r(V)),
\end{equation*}
for $i\geq 0$ and $r\geq 1$. We write ${\rm E}_i:={\rm E}^{(1)}_i$ and ${\rm F}_i:={\rm F}^{(1)}_i$, for $i\geq 0$.
By extending additively according to \eqref{eq:block decomp O}, we obtain exact functors ${\rm E}^{(r)}_i, {\rm F}^{(r)}_i : \mc O_{n,\hZ}\rightarrow \mc O_{n,\hZ}$ called the {\em translation functors}.

Let $\mc O^{\Delta}_{n,\hZ}:=\mc O^{\Delta}_{n}\cap\mc O_{n,\hZ}$.
The following lemma implies that $\mc O^{\Delta}_{n,\hZ}$ is invariant under ${\rm E}^{(r)}_i$ and ${\rm F}^{(r)}_i$, for $i\geq 0$ and $r\geq 1$.

\begin{lem}\label{lem:translation}
Let $\la\in \La_\hZ$. For $i\geq 0$ and $r\geq 1$,
we have
\begin{itemize}

\item[(1)] ${\rm E}^{(r)}_i \Delta(\la)$ has a filtration whose non-zero subquotients
consist of Verma modules
$\Delta(\la - \varepsilon_{j_1}-\cdots-\varepsilon_{j_r})$ for $1\leq j_1<\cdots<j_r\leq n$ such that $\la_{j_k}=i+1$ or $-i$, for $1\leq k\leq r$,
with
$
({\rm E}^{(r)}_i \Delta(\la):\Delta(\la - \varepsilon_{j_1}-\cdots-\varepsilon_{j_r}))=2^r,
$

\item[(2)] ${\rm F}^{(r)}_i \Delta(\la)$ has a filtration whose non-zero subquotients consist of Verma modules
$\Delta(\la + \varepsilon_{j_1}+\cdots+\varepsilon_{j_r})$ for $1\leq j_1<\ldots<j_r\leq n$ such that $\la_{j_k}=i$ or $-i-1$, for $1\leq k\leq r$,
with
$
({\rm F}^{(r)}_i \Delta(\la):
\Delta(\la + \varepsilon_{j_1}+\cdots+\varepsilon_{j_r}))=2^r.
$
\end{itemize}
\end{lem}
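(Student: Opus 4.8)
The plan is to compute the effect of tensoring a Verma module with the supersymmetric tensor powers $S^r(V)$ and $S^r(V^*)$ and then to apply the central character projection. The essential tool is that a module of the form $\Delta(\la) \otimes M$, for $M$ finite dimensional, has a Verma flag whose subquotients are controlled by the weights of $M$; this is the standard tensor identity together with the fact that $\mathrm{Ind}_{\mf b}^{\G}(-)$ is exact (for queer Lie superalgebras one must be slightly careful, since $\Delta(\la) = \mathrm{Ind}_{\mf b}^{\G} W_\la$ and $W_\la$ is not one-dimensional, but the argument goes through using the $\mf b$-module filtration of $M$ by weight spaces and the projection formula). So first I would record that $\Delta(\la) \otimes M$ has a filtration with subquotients $\mathrm{Ind}_{\mf b}^{\G}(W_\la \otimes \C_\mu)$ as $\mu$ runs over the weights of $M$ counted with multiplicity, where $\C_\mu$ is the $\mf b$-module obtained from the weight-space filtration. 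The subtlety is that $W_\la \otimes \C_\mu$ need not be irreducible as an $\h$-module, so one needs to further decompose it into a flag of $W_{\la+\mu}$'s; this is where the multiplicity $2^r$ will ultimately come from, and I would cite the relevant computation of $\dim W_\la = 2^{\lceil \ell(\la)/2\rceil}$ to pin it down.

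Second, I would identify the weights of $S^r(V)$ and $S^r(V^*)$. Since $V = \C^{n|n}$ with even weights $\varepsilon_1,\ldots,\varepsilon_n$ and odd weights also $\varepsilon_1,\ldots,\varepsilon_n$ (ignoring parity), the supersymmetric power $S^r(V)$ has weights $\varepsilon_{j_1}+\cdots+\varepsilon_{j_r}$ for $1\le j_1 < \cdots < j_r \le n$ (the strict inequalities because a repeated index would need one even and one odd factor, and the supersymmetrization kills the repetition appropriately — more precisely each such weight occurs, and with the correct multiplicity after accounting for the $2^r$ factor that merges with the Verma-flag multiplicity above). Dually, $S^r(V^*)$ has weights $-\varepsilon_{j_1}-\cdots-\varepsilon_{j_r}$. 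Thus $\Delta(\la)\otimes S^r(V)$ has a Verma flag with subquotients $\Delta(\la + \varepsilon_{j_1}+\cdots+\varepsilon_{j_r})$, each appearing with multiplicity $2^r$, and similarly for $S^r(V^*)$ with a sign flip.

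Third, I would apply the block projection. By Lemma~\ref{lem:central character}, $\Delta(\mu)$ lies in the block $\mc O_\gamma$ with $\gamma = \mathrm{wt}(\mu)$. For $\Delta(\la)\otimes S^r(V)$ starting from $\la \in \mc O_\gamma$ with $\gamma = \mathrm{wt}(\la)$, the subquotient $\Delta(\la + \varepsilon_{j_1}+\cdots+\varepsilon_{j_r})$ survives the projection $\mathrm{pr}_{\gamma - r\alpha_i}$ exactly when $\mathrm{wt}(\la + \varepsilon_{j_1}+\cdots+\varepsilon_{j_r}) = \gamma - r\alpha_i$, i.e. $\sum_{k=1}^r (\delta_{\la_{j_k}+1} - \delta_{\la_{j_k}}) = -r\alpha_i = -r(\delta_{i-\hf}-\delta_{i+\hf})$ for $i\ge 1$ (and the $i=0$ case handled separately using $\alpha_0 = -2\delta_{\hf}$). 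A direct check shows $\delta_{c+1}-\delta_c = -\alpha_i$ precisely when $c = i$ or $c = -i-1$ (recall $\delta_{-r} = -\delta_r$), and the $i=0$ case forces each $\la_{j_k} \in \{-\hf, \text{(the value whose shift gives } -2\delta_\hf)\}$; tracking this bookkeeping gives exactly the stated conditions $\la_{j_k} = i$ or $-i-1$ for part (2), and the analogous $\la_{j_k} = i+1$ or $-i$ for part (1). The main obstacle I anticipate is the careful bookkeeping in combining the intrinsic multiplicity $2^r$ coming from $S^r(V)$ being a ``doubled'' weight space with the multiplicities arising from decomposing $W_\la \otimes \C_\mu$ into Clifford-module pieces — making sure the product comes out to exactly $2^r$ and not $2^{2r}$ or some $\la$-dependent quantity. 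This is a queer-specific complication with no analogue in the $\mf{gl}$ case; everything else is a routine adaptation of the $\mf{gl}(m|n)$ arguments (cf.~\cite[Section 3]{CLW} and \cite[Lemma 4.23]{Br2}).
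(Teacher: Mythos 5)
Your plan follows the same route as the paper's proof: filter $\Delta(\la)\otimes S^r(V)$ by the weight spaces of $S^r(V)$, decompose each $W_\la\otimes S^r(V)_\mu$ into Clifford pieces $W_{\la+\mu}$, apply the exact parabolic induction, and then project to the correct block. The point you flag as your main obstacle is, however, a genuine gap: you do not resolve why the multiplicity is exactly $2^r$ rather than some $\la$-dependent power of $2$, and this is the one place the argument is not routine.

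The resolution has two ingredients, both of which the paper needs and your sketch omits. First, since $\la\in\La_\hZ$, every entry of $\la$ and of $\la+\mu$ is nonzero, so $\ell(\la)=\ell(\la+\mu)=n$, and the multiplicity
\[
[W_\la\otimes W_\mu : W_{\la+\mu}] \;=\; \frac{\dim(W_\la\otimes W_\mu)}{\dim W_{\la+\mu}} \;=\; 2^{\lceil n/2\rceil + \lceil \ell(\mu)/2\rceil - \lceil n/2\rceil} \;=\; \dim W_\mu,
\]
which is \emph{independent of $\la$}. This is precisely the half-integer-weight hypothesis doing its work; without it the $\lceil\cdot/2\rceil$'s do not cancel. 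Combining with $[S^r(V)_\mu : W_\mu] = \dim S^r(V)_\mu/\dim W_\mu$ gives
\[
[W_\la\otimes S^r(V) : W_{\la+\mu}] \;=\; \dim S^r(V)_\mu.
\]
Second, one must actually compute $\dim S^r(V)_{\varepsilon_{j_1}+\cdots+\varepsilon_{j_r}} = 2^r$ for distinct $j_1<\cdots<j_r$: this comes from the vector-space decomposition $S^r(V)\cong\bigoplus_{s} S^s(\C^{n|0})\otimes\Lambda^{r-s}(\C^{0|n})$, in which the basis vectors of the given weight are indexed by the $2^r$ subsets of $\{j_1,\ldots,j_r\}$ deciding which factors are even. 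Only distinct-index weights survive the block projection, so the total multiplicity is exactly $2^r$.

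A secondary inaccuracy: you assert that $S^r(V)$ has weights $\varepsilon_{j_1}+\cdots+\varepsilon_{j_r}$ only for strictly increasing $j_k$. That is not true --- $S^r(V)$ contains, e.g., $u_{\ov 1}\cdot u_{\ov 1}$ of weight $2\varepsilon_1$. The strictness only emerges a posteriori, because the block projection ${\rm pr}_{\gamma-r\alpha_i}$ kills the repeated-index contributions. Your account of the $\mathrm{wt}$-condition is therefore in the right spirit but conflates which weights are present in $S^r(V)$ with which ones survive the projection.

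So: same global strategy as the paper, but the precise multiplicity argument --- where the half-integer-weight assumption $\ell(\la)=\ell(\la+\mu)=n$ is used and the explicit computation $\dim S^r(V)_{\nu}=2^r$ is carried out --- needs to be supplied, not just anticipated.
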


\begin{proof}
Let us prove (2) only, since the proof for (1) is similar.

Let $\mu\in \La_\Z$ be a weight of $S^r(V)$.
The weight space $S^r(V)_\mu$ is a finite-dimensional $\mf h$-module, and its composition series has a unique composition factor $W_\mu$ with
\begin{equation}\label{eq:mult-1}
[S^r(V)_\mu:W_\mu]=\dim S^r(V)_\mu/\dim W_\mu.
\end{equation}
Also $W_\la\otimes W_\mu$ has a composition series with a unique composition factor $W_{\la+\mu}$, and
\begin{equation}\label{eq:mult-2}
\begin{split}
[W_\la\otimes W_\mu:W_{\la+\mu}]&=\dim(W_\la\otimes W_\mu)/\dim W_{\la+\mu}\\
&=2^{\lceil \ell(\la)/2 \rceil +\lceil \ell(\mu)/2 \rceil - \lceil \ell(\la+\mu)/2 \rceil}=\dim W_{\mu},
\end{split}
\end{equation}
since $\ell(\la)=\ell(\la+\mu)=n$.
Let $\nu_1,\ldots,\nu_N$ be an enumeration of weights of $S^r(V)$.
%such that $\nu_i>\nu_j$ (dominance ordering in the weight lattice for $\mf g$) implies $i<j$.
Then we have a filtration of $\h$-modules $0=U_0\subset U_1\subset \cdots \subset U_N=S^r(V)$ such that $U_p/U_{p-1}\cong S^r(V)_{\nu_p}$ for $1\leq p\leq N-1$.

Hence $W_\la\otimes S^r(V)$ has a composition series as an $\h$-module, where each composition factor is isomorphic to $W_{\la+\nu_p}$ for some $p$, and
\begin{equation}\label{eq:mult-3}
[W_\la\otimes S^r(V):W_{\la+\nu_p}]
=[S^r(V)_{\nu_p}:W_{\nu_p}][W_\la\otimes W_{\nu_p}:W_{\la+\nu_p}]=
\dim S^r(V)_{\nu_p},
\end{equation}
by \eqref{eq:mult-1} and \eqref{eq:mult-2}.
Now, applying the exact functor $U(\G)\otimes_{U(\mf b)} ? $ to this composition series of $W_\la\otimes S^r(V)$, we have a Verma flag of $\Delta(\la)\otimes S^r(V)$, where each non-zero subquotient is isomorphic to $\Delta(\la+\nu_p)$ for some $p$, and
\begin{equation*}\label{eq:mult-4}
\left(\Delta(\la)\otimes S^r(V) : \Delta(\la+\nu_i) \right)
=[W_\la\otimes S^r(V):W_{\la+\nu_p}]=\dim S^r(V)_{\nu_p},
\end{equation*}
by \eqref{eq:mult-3}.
Moreover, $\Delta(\la+\nu_p)$ occurs in ${\rm F}^{(r)}_i \Delta(\la)$ if and only if $\nu_p = \varepsilon_{j_1}+\cdots+\varepsilon_{j_r}$,
for some $1\leq j_1<\cdots<j_r\leq n$ such that $\la_{j_k}=i$ or $-i-1$ for $1\leq k\leq r$.

Let $\nu_p$ be such that  $\Delta(\la+\nu_p)$ occurs in ${\rm F}^{(r)}_i \Delta(\la)$.
Since
\begin{equation*}
S^r(V) \cong \bigoplus_{s=0}^r  S^s(\C^{n|0})\otimes \Lambda^{r-s}(\C^{0|n}),
\end{equation*}
as a vector space, the vectors
\begin{equation}\label{eq:super symmetric power}
(u_{\ov{i_1}}\ldots u_{\ov{i_s}})\otimes (u_{i_{s+1}}\wedge\ldots\wedge u_{i_r}),
\end{equation}
for $1\leq i_1<\cdots< i_s \leq n$ and $1\leq i_{s+1}< \cdots< i_r\leq n$ such that $\{\,i_1,\ldots,i_r\,\}=\{\,j_1,\ldots,j_r\,\}$, form a basis of $S^r(V)_{\nu_p}$, where the $\h_\even$-weight of the vector \eqref{eq:super symmetric power} is $\varepsilon_{i_1}+\cdots+\varepsilon_{i_r}=\varepsilon_{j_1}+\cdots+\varepsilon_{j_r}$. Hence we have
\begin{equation*}
\dim S^r(V)_{\nu_p}=2^r.
\end{equation*}
The proof is completed.
\end{proof}

Let
\begin{equation*}
\begin{split}
\TT^n_{\Z[q^{\pm}]}:=\bigoplus_{\la\in \La_\hZ}\Z[q,q^{-1}]M_\la,\quad \TT^n_\Z:=\Z\otimes_{\Z[q,q^{-1}]}\TT^n_{\Z[q^{\pm}]},
\end{split}
\end{equation*}
where $\Z$ is regarded as the right $\Z[q,q^{-1}]$-module with $q$ acting  as $1$.
If we put $\UU_\Z:=\Z\otimes_{\Z[q,q^{-1}]}\UU_{\Z[q^{\pm}]}$, then $\TT^n_\Z$ is a $\UU_\Z$-module since $\TT^n_{\Z[q^{\pm}]}$ is a $\UU_{\Z[q^{\pm}]}$-module.
Let $M_\la(1)=1\otimes M_\la\in \TT^n_\Z$, for $\la\in \La_\hZ$.
Define a $\Z$-linear isomorphism  by
\begin{align}
\label{Psi2}
\Psi:K(\mc O^\Delta_{n,\hZ}) \longrightarrow \TT^{n}_\Z, \qquad
 [\Delta (\la)] \mapsto M_{\la}(1)  \quad (\la\in\La_\hZ).
\end{align}
We claim that (up to some scalar multiples) the translation functors on the Grothendieck group of $\mc O^\Delta_{n,\hZ}$
act as the divided powers of the Chevalley generators in the quantum group of type $C$.

\begin{prop}\label{prop:translation on K}
The isomorphism  $\Psi:K(\mc O^\Delta_{n,\hZ}) \rightarrow \TT^{n}_\Z$ in \eqref{Psi2} sends
\begin{equation*}
\Psi([{\rm E}^{(r)}_iM]) =2^r E^{(r)}_i\Psi([M]),\quad
\Psi([{\rm F}^{(r)}_iM]) =2^r F^{(r)}_i\Psi([M]),
\end{equation*}
for $i\geq 0$, $r\geq 1$, and $M\in \mc O^{\Delta}_{n,\hZ}$.
\end{prop}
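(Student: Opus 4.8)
The plan is to verify the two displayed equalities after transporting them, via the isomorphism $\Psi$ of \eqref{Psi2}, to the $\Z$-basis $\{\,[\Delta(\la)]\mid\la\in\La_\hZ\,\}$ of $K(\mc O^\Delta_{n,\hZ})$. Since all maps involved are $\Z$-linear, it suffices to prove $\Psi([{\rm E}^{(r)}_i\Delta(\la)])=2^rE^{(r)}_iM_\la(1)$ and $\Psi([{\rm F}^{(r)}_i\Delta(\la)])=2^rF^{(r)}_iM_\la(1)$ in $\TT^n_\Z$, for all $\la\in\La_\hZ$, $i\ge0$ and $r\ge1$. I will treat the ${\rm E}$-case, the ${\rm F}$-case being completely parallel. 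For a subset $S=\{j_1<\dots<j_r\}\subseteq\{1,\dots,n\}$ write $\la-\varepsilon_S:=\la-\varepsilon_{j_1}-\dots-\varepsilon_{j_r}$. By Lemma \ref{lem:translation}(1) we have $[{\rm E}^{(r)}_i\Delta(\la)]=2^r\sum_S[\Delta(\la-\varepsilon_S)]$ in $K(\mc O^\Delta_{n,\hZ})$, the sum running over those $r$-subsets $S$ all of whose entries $\la_j$ ($j\in S$) lie in the admissible list recorded there; applying $\Psi$ yields $\Psi([{\rm E}^{(r)}_i\Delta(\la)])=2^r\sum_S M_{\la-\varepsilon_S}(1)$.

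For the other side I would compute $E^{(r)}_iM_\la(1)$ directly from the explicit $\UU$-action on $\VV=\bigoplus_r\Q(q)v_r$ together with the coproduct \eqref{comul}. Because $\TT^n_{\Z[q^{\pm}]}$ is a $\UU_{\Z[q^{\pm}]}$-module, $E^{(r)}_iM_\la$ lies in $\TT^n_{\Z[q^{\pm}]}$ and hence has a well-defined specialization $E^{(r)}_iM_\la(1)\in\TT^n_\Z$. Each $K_i$ acts on $\VV$, and so on $\TT^n$, by a power of $q$, hence as the identity at $q=1$; therefore, by \eqref{comul}, the specialized operator $E_i$ acts on $\TT^n_\Z$ as the derivation $\sum_{k=1}^n 1^{\otimes(k-1)}\otimes E_i\otimes 1^{\otimes(n-k)}$. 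From the formulas of the natural representation one reads off that $E_iv_s\ne0$ for exactly two half-integers $s$ when $i\ge1$ (and for the single value $s=\hf$ when $i=0$), that $E_iv_s=v_{s-1}$ in each such case, and that $E_i^2=0$ on $\VV$; moreover these positions $s$, together with the shift $v_s\mapsto v_{s-1}$, match precisely the admissible entries $\la_j$ and the resulting weights $\la-\varepsilon_j$ appearing in Lemma \ref{lem:translation}(1). Iterating the derivation $r$ times then gives $E^r_iM_\la(1)=r!\sum_S M_{\la-\varepsilon_S}(1)$, the sum over the same admissible $r$-subsets $S$ as above. Finally $E^r_i=[r]_i!\,E^{(r)}_i$ holds in $\UU_{\Z[q^{\pm}]}$ and $[r]_i!$ specializes to $r!$ at $q=1$, so in the torsion-free $\Z$-module $\TT^n_\Z$ we have $r!\,E^{(r)}_iM_\la(1)=E^r_iM_\la(1)=r!\sum_S M_{\la-\varepsilon_S}(1)$, whence $E^{(r)}_iM_\la(1)=\sum_S M_{\la-\varepsilon_S}(1)$. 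Comparing with the first paragraph gives $\Psi([{\rm E}^{(r)}_i\Delta(\la)])=2^rE^{(r)}_iM_\la(1)=2^rE^{(r)}_i\Psi([\Delta(\la)])$, as desired. For the ${\rm F}$-case one argues identically, invoking Lemma \ref{lem:translation}(2), the facts that $F_iv_s=v_{s+1}$ on the (two, resp.\ one) admissible slots and $F_i^2=0$ on $\VV$, and that $\Delta(F_i)$ specializes to $1\otimes F_i+F_i\otimes 1$ at $q=1$.

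The step that genuinely requires care is the combinatorial dictionary invoked in the second paragraph: one must check that the set $\{\,s\mid E_iv_s\ne0\,\}$ in $\VV$, together with the rule $E_iv_s=v_{s-1}$, reproduces on the nose the admissible $r$-subsets $S$ and the weights $\la-\varepsilon_S$ (resp.\ $\la+\varepsilon_S$ for $F_i$) appearing in Lemma \ref{lem:translation}, with special attention to the exceptional node $i=0$, where $\alpha_0=-2\delta_{\hf}$ and $q_0=q^2$. The only analytic subtlety is the passage to $q=1$ for divided powers, which is why I prefer to use the identity $E^r_i=[r]_i!\,E^{(r)}_i$ and cancel the factor $r!$ in the $\Z$-free module $\TT^n_\Z$ rather than dividing $E_i^r$ by $[r]_i!$ before specializing; the representation-theoretic multiplicity $2^r$ is then matched exactly by the disappearance of this combinatorial factor $r!$ when $E^r_i$ is rewritten through $E^{(r)}_i$ at $q=1$. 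No positivity or linkage input beyond Lemma \ref{lem:translation} enters the argument.
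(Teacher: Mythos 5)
Your argument is correct and follows essentially the same route as the paper: reduce by exactness and $\Z$-linearity to $M=\Delta(\la)$, quote Lemma~\ref{lem:translation} for the Verma multiplicities, and compare with the $\UU$-action on $M_\la$ at $q=1$. The paper compresses the $r\geq 2$ case into ``it suffices to check $r=1$''; your derivation argument (at $q=1$, $K_i\mapsto 1$ so $E_i$ acts as a derivation with $E_i^2=0$ on $\VV$, giving $E_i^r M_\la(1)=r!\sum_S M_{\la-\varepsilon_S}(1)$, and then cancelling $r!$ against the specialization of $[r]_i!$) spells out why the divided-power multiplicity $2^r$ from Lemma~\ref{lem:translation} matches $2^r E^{(r)}_iM_\la(1)$, which is the content the paper leaves implicit. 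One point to be aware of when carrying out the ``combinatorial dictionary'' you flag: the admissible slots you extract correctly from the $\UU$-action are $\la_{j_k}=i+\hf$ or $-i+\hf$ for ${\rm E}_i$ (and $\la_{j_k}=i-\hf$ or $-i-\hf$ for ${\rm F}_i$), whereas Lemma~\ref{lem:translation} as printed reads $i+1$ or $-i$ (resp.\ $i$ or $-i-1$), which are impossible for $\la\in\La_\hZ$; this is evidently a typo inherited from the integer-weight case, and the values you use are the ones the proof of that lemma actually produces, so your matching is sound.
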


\begin{proof}
By Lemma \ref{lem:translation} and the exactness of translation functors, it suffices to check for $r=1$ and $M=\Delta(\la)$.
Note $\Psi([\Delta(\la)]) =M_\la(1)$.
The action of $E_i$ (and $F_i$, respectively) on the basis element $M_\la$ of  $\TT^n_\Z$ is clearly seen to be compatible with
the action of ${\rm E}_i$ (and  ${\rm F}_i$, respectively) given in Lemma~\ref{lem:translation}.
\end{proof}

%%%%%
\subsection{Translation functors on $\mc E_{n,\hZ}$ and $\mc F_{n,\hZ}$}

Note that $\mc E_{n,\hZ}$ is invariant under ${\rm E}_i$ and ${\rm F}_i$ for $i\geq 0$.

\begin{prop}\label{prop:translation on E}
The $\Z$-linear isomorphism   $\Psi_e:K(\mc E_{n,\hZ}) \rightarrow \EE^{n}_\Z$ in \eqref{Psi} sends
\begin{equation*}
\Psi_e([{\rm E}_iM]) =2 E_i\Psi_e([M]),\quad
\Psi_e([{\rm F}_iM]) =2 F_i\Psi_e([M]),
\end{equation*}
for $i\geq 0$ and $M\in \mc E_{n,\hZ}$.
\end{prop}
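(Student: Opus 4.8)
The plan is to verify both identities on the $\Z$-basis $\{\,[E(\la)]\,|\,\la\in\La^+_\hZ\,\}$ of $K(\mc E_{n,\hZ})$. Since ${\rm E}_i$ and ${\rm F}_i$ are exact and preserve $\mc E_{n,\hZ}$, it suffices to take $M=E(\la)$; and since replacing the natural module $V=\C^{n|n}$ by its dual $V^*$ interchanges the roles of ${\rm E}_i$ and ${\rm F}_i$, I will concentrate on ${\rm F}_i$. The target identity is
\begin{equation*}
[{\rm F}_iE(\la)]=2\!\!\sum_{\substack{1\le r\le n,\ \la+\varepsilon_r\in\La^+_\hZ\\ {\rm wt}(\la+\varepsilon_r)={\rm wt}(\la)-\alpha_i}}\!\![E(\la+\varepsilon_r)]\qquad\text{in }K(\mc E_{n,\hZ}),
\end{equation*}
since, by \lemref{lem:action of U on E-basis} specialized at $q=1$, the right-hand side is exactly $\Psi_e^{-1}\bigl(2F_iE_\la(1)\bigr)$: each coefficient $q^{n_\nu}$ becomes $1$ there and the index set of $\nu$'s is precisely the one above.

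To compute $[{\rm F}_iE(\la)]$ I will work with formal characters, which detect classes in $K(\mc E_{n,\hZ})$ because $\{{\rm ch}\,L(\mu)\}$ is linearly independent. We have $S^1(V)=V$, and as an $\h_\even$-module $V$ is a sum of two copies of the natural $\gl(n)$-module, so ${\rm ch}\,V=2\sum_{k=1}^n e^{\varepsilon_k}$. As $\la\in\La^+_\hZ$ we have $\Phi^+(\la)=\emptyset$, so \eqref{eq:Euler character} gives ${\rm ch}\,E(\la)=2^{\lceil n/2\rceil}D^{-1}\sum_{w\in\mf S_n}(-1)^{\ell(w)}e^{w\la}$. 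Multiplying and using $w\la+\varepsilon_k=w(\la+\varepsilon_{w^{-1}(k)})$ to reindex the double sum, I expect
\begin{equation*}
{\rm ch}\bigl(E(\la)\otimes V\bigr)=2\sum_{k=1}^n \widetilde E(\la+\varepsilon_k),\qquad \widetilde E(\mu):=2^{\lceil n/2\rceil}D^{-1}\sum_{w\in\mf S_n}(-1)^{\ell(w)}e^{w\mu}\quad(\mu\in\La_\hZ).
\end{equation*}
Now $\widetilde E(\mu)={\rm ch}\,E(\mu)$ when $\mu\in\La^+_\hZ$, while $\widetilde E(\mu)=0$ as soon as $\mu$ has a repeated entry (pair each $w$ with its product by the corresponding transposition). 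Since $\la$ is strictly decreasing, $\la+\varepsilon_k$ either lies in $\La^+_\hZ$ or has exactly one (adjacent) repeated pair, so only these two cases arise. Projecting onto the block $\mc O_{{\rm wt}(\la)-\alpha_i}$ and noting, via \eqref{eq:block decomp O} and \lemref{lem:central character}, that the character $\widetilde E(\la+\varepsilon_k)$ is supported entirely in the block $\mc O_{{\rm wt}(\la+\varepsilon_k)}$, one arrives at the displayed formula for $[{\rm F}_iE(\la)]$; a small extra point is that distinct $k$'s give distinct summands $[E(\la+\varepsilon_k)]$, which holds because a strictly decreasing tuple is determined by its underlying multiset.

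Applying $\Psi_e$ and comparing with the $q=1$ specialization of \lemref{lem:action of U on E-basis} then yields $\Psi_e([{\rm F}_iE(\la)])=2F_iE_\la(1)=2F_i\Psi_e([E(\la)])$, and repeating the argument with $V^*$ (with ${\rm ch}\,V^*=2\sum_k e^{-\varepsilon_k}$) in place of $V$ gives the ${\rm E}_i$-statement. I expect the main obstacle to be purely combinatorial bookkeeping: namely checking that the index set produced by the block projection --- a condition ${\rm wt}(\la+\varepsilon_r)={\rm wt}(\la)-\alpha_i$, i.e.\ ultimately a condition on the entry $\la_r$ --- matches summand-for-summand, and with multiplicity exactly $2$, the index set coming from the type $C$ quantum-group action in \lemref{lem:action of U on E-basis}, the node $\alpha_0=-2\delta_{\hf}$ and the $r\mapsto-r$ symmetry of the type $C$ weights being where the two descriptions must be reconciled most carefully.

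An alternative, more functorial route is to observe that the Euler characteristic $[M]\mapsto\sum_j(-1)^j[\mc L_j M]$ of the dual Zuckerman functor defines an additive map $K(\mc O^\Delta_{n,\hZ})\to K(\mc E_{n,\hZ})$ sending $[\Delta(\mu)]\mapsto[E(\mu)]$ for $\mu\in\La^+_\hZ$ and $[\Delta(\mu)]\mapsto 0$ when $\mu$ has a repeated entry, hence intertwining $\Psi$ of \eqref{Psi2} with $\Psi_e$ of \eqref{Psi}; since $-\otimes S^r(V)$ is exact and $V$ finite-dimensional, this map commutes with $-\otimes S^r(V)$ and with block projection, so $\propref{prop:translation on K}$ transports the translation-functor identities from $\TT^n_\Z$ to $\EE^n_\Z$ directly.
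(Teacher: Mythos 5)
Your main argument is correct and is essentially the paper's proof: reduce to $M=E(\la)$, compute ${\rm ch}\bigl(E(\la)\otimes V\bigr)$, identify the result with $2\sum_\nu {\rm ch}\,E(\nu)$ over $\nu=\la+\varepsilon_r\in\La^+_\hZ$, project onto the relevant block, and match with the $q=1$ specialization of \lemref{lem:action of U on E-basis}. The only real difference is in the middle computation: the paper writes ${\rm ch}\,E(\la)=2^{\lceil n/2\rceil}s_{\rho_n}\,s_{\la-\rho_n}$ via the Weyl character formula and then applies the Pieri/Littlewood--Richardson rule to $s_{\la-\rho_n}\cdot s_{(1,0,\ldots,0)}$, whereas you reindex the double sum $\sum_{w,k}(-1)^{\ell(w)}e^{w\la+\varepsilon_k}$ directly and dispose of the degenerate shifts $\la+\varepsilon_k$ (those with a repeated entry) by the alternating-sum cancellation. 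These are the same manipulation viewed from two angles; your version is marginally more elementary and avoids invoking the LR rule, while the paper's phrasing makes the identification of the weight multiplicities more immediate. Your closing sketch --- pushing \propref{prop:translation on K} through the additive map $K(\mc O^\Delta_{n,\hZ})\to K(\mc E_{n,\hZ})$ given by the Euler characteristic of the derived dual Zuckerman functor --- is a genuinely different and more functorial route that the paper does not take; it would work provided one justifies that this Euler-characteristic map commutes with tensoring by the finite-dimensional module $V$ (and with block projection), and that the virtual Euler class of $\Delta(\mu)$ vanishes for $\mu\in\La_\hZ\setminus\La^+_\hZ$, both of which you correctly flag but leave unverified.

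One small remark: in the displayed target identity you should double-check the sign convention for the block shift. With the paper's definition ${\rm F}_iM={\rm pr}_{\gamma-\alpha_i}(M\otimes V)$ for $M\in\mc O_\gamma$, the matching with \lemref{lem:action of U on E-basis} requires ${\rm wt}(\la+\varepsilon_r)={\rm wt}(\la)-\alpha_i$, which is indeed what you wrote; the paper's own text of the proof has a sign slip (writing $+\alpha_i$) that you have silently corrected.
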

\begin{proof}
Let us consider the case of ${\rm F}_i$ only, as the proof for ${\rm E}_i$ is similar.
For $\la\in\La^+_\hZ$, we have by \eqref{eq:Euler character}
\begin{align*}
{\rm ch}E(\la)=&2^{\lceil n/2 \rceil}\sum_{w\in \mf S_n}w\left(
x_1^{\la_1}\cdots x_n^{\la_n} \prod_{i<j}\frac{x_i+x_j}{x_i-x_j}
\right),\\
=&\frac{2^{\lceil n/2 \rceil}}{D}\sum_{w\in \mf S_n}(-1)^{\ell(w)}w\left(
x_1^{\la_1}\cdots x_n^{\la_n}
\right),
\end{align*}
where $x_i=e^{\varepsilon_i}$ for $1\leq i\leq n$. Thus, by the Weyl character formula we have
\begin{equation}\label{eq:euler=prod of Schur}
{\rm ch}E(\la)=2^{\lceil n/2 \rceil} s_{\rho_n}(x_1,\ldots,x_n)s_{\la-\rho_n}(x_1,\ldots,x_n),
\end{equation}
where $\rho_n=(n-1,n-2,\ldots,1,0)$ and $s_\mu(x_1,\ldots,x_n)$ is the Laurent Schur polynomial in $x_1,\ldots,x_n$, for $\mu\in \La^{\geqslant}_\Z\cup \La^{\geqslant}_\hZ$.
Note that ${\rm ch}V=2(x_1+\cdots+x_n)=2 s_{(1,0,\ldots,0)}(x_1,\ldots,x_n)$.
It follows by the Littlewood-Richardson rule that
\begin{equation}\label{eq:LR rule}
s_{\la-\rho_n}(x_1,\ldots,x_n)s_{(1,0,\ldots,0)}(x_1,\ldots,x_n)=\sum_{\delta}s_{\delta}(x_1,\ldots,x_n),
\end{equation}
where the sum is over all $\delta\in\La^{\geqslant}_{\hZ}$ such that $\delta-(\la-\rho_n)=\varepsilon_r$ for some $1\leq r\leq n$. By putting $\nu=\delta+\rho_n$, we conclude from \eqref{eq:euler=prod of Schur} and \eqref{eq:LR rule} that
${\rm ch}(E(\la)\otimes V) =2 \sum_{\nu}{\rm ch}E(\nu)$,
where the sum is over all $\nu\in \La^+_\hZ$ such that $\nu=\la+\varepsilon_r$ for some $1\leq r\leq n$. By definition of ${\rm F}_i$, ${\rm ch}({\rm F}_i E(\la))$ is the partial sum of ${\rm ch}(E(\la)\otimes V)$ over $\nu$ when ${\rm wt}(\nu)={\rm wt}(\la)+\alpha_i$. By comparing with Lemma \ref{lem:action of U on E-basis}, we may conclude that
$\Psi_e([{\rm F}_i M]) = 2F_i\Psi_e([M])$, for $M\in \mc E_{n,\hZ}$.
\end{proof}

For $\la\in\La^+_{\hZ}$ let $U(\la)$ denote the injective hull of $L(\la)\in \mc E_{n,\hZ}$. Using co-induction as in \cite[Lemma 4.40]{Br2}, it follows that $U(\la)$ is finite-dimensional. Let $\mc F_{n,\hZ}$ denote the full subcategory of injective modules in $\mc E_{n,\hZ}$. Then every object in $\mc F_{n,\hZ}$ is a direct sum of finitely many $U(\la)$'s, and $\{\,[U(\la)]\,\vert\,\la\in\La^+_{\hZ}\,\}$ is a basis of $K(\mc F_{n,\hZ})$. The category $\mc F_{n,\hZ}$ is invariant under the translation functors ${\rm E}_i$ and ${\rm F}_i$, for $i\ge 0$.

We have a pairing $\langle\cdot,\cdot\rangle:K(\mc E_{n,\hZ})\times K(\mc F_{n,\hZ})\longrightarrow \Z$ by extending linearly the following formula (see \cite[Lemma 4.45]{Br2}):
\begin{equation*}\label{defn:pairing}
\langle [M],[N]\rangle:=\begin{cases}\dim_\C {\rm Hom}_\G(M^*,N), \text{ if }M\text{ is of type } \texttt{M},\cr
\hf\dim_\C {\rm Hom}_\G(M^*,N), \text{ if }M\text{ is of type } \texttt{Q},
\end{cases}
\end{equation*}
for $M\in \mc E_{n,\hZ}$ and $N\in \mc F_{n,\hZ}$.
We note that $L(\la)^*\cong L(-w_0\la)$, for all $\la\in\La^+$, and hence we have
\begin{align}\label{defn:pairing1}
\langle[L(-w_0\la)],[U(\mu)]\rangle = \delta_{\la\mu},\quad \text{ for $\la,\mu\in\La^+_\hZ$.}
\end{align}

Let
\begin{equation*}
\begin{split}
\FF^n_{\Z[q^{\pm}]}:=\bigoplus_{\la\in \La^+_\hZ}\Z[q,q^{-1}]U_\la,\quad \FF^n_\Z:=\Z\otimes_{\Z[q,q^{-1}]}\FF^n_{\Z[q^{\pm}]}.
\end{split}
\end{equation*}
%Here, as before, we regard $\Z$ as the right $\Z[q,q^{-1}]$-module with $q$ acting  as $1$.
We define a $\Z$-linear isomorphism by
\begin{align}\label{eq:map Phi}
\Phi: K(\mc F_{n,\Z})\longrightarrow \mathbb F^n_\Z,
\qquad
[U(\la)] \mapsto U_\la(1)\quad (\la\in\La^+_\hZ).
\end{align}
Recall from \eqref{eq:dual pairing} that we have a bilinear pairing $\langle\cdot,\cdot\rangle: \EE^n_{\Z[q^{\pm}]}\times\FF^n_{\Z[q^{\pm}]}\rightarrow\Z[q,q^{-1}]$, and let $\langle\cdot,\cdot\rangle_{q=1}$ be its specialization at $q=1$.
It follows from \eqref{defn:pairing1} that
\begin{align*}
\langle[M],[N]\rangle = \langle\Psi([M]),\Phi([N])\rangle_{q=1},\quad \text{for $M\in \mc E_{n,\hZ}$, $N\in \mc F_{n,\hZ}$. }
\end{align*}

\begin{prop}
The $\Z$-linear isomorphism $\Phi:K(\mc F_{n,\hZ}) \rightarrow \FF^{n}_\Z$ given by \eqref{eq:map Phi} sends
\begin{equation*}
\Phi([{\rm E}_iN]) =2 E_i\Phi([N]),\quad
\Phi([{\rm F}_iN]) =2 F_i\Phi([N]),
\end{equation*}
for $i\geq 0$ and $N\in \mc F_{n,\hZ}$.
\end{prop}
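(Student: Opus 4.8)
The plan is to obtain this proposition by taking adjoints, with respect to the pairing $\langle\cdot,\cdot\rangle$, of the relations already established in \propref{prop:translation on E}. The functors ${\rm E}_i={\rm E}^{(1)}_i$ and ${\rm F}_i={\rm F}^{(1)}_i$ are exact and $\mc F_{n,\hZ}$ is translation-stable as noted above, so they descend to $K(\mc F_{n,\hZ})$. Recall that $\Psi_e$ and $\Phi$ identify $\langle\cdot,\cdot\rangle\colon K(\mc E_{n,\hZ})\times K(\mc F_{n,\hZ})\to\Z$ with the $q=1$ specialization $\langle\cdot,\cdot\rangle_{q=1}\colon\EE^n_\Z\times\FF^n_\Z\to\Z$ of the pairing in \eqref{eq:dual pairing}, which is non-degenerate because $\{L_{-w_0\la}\}$ and $\{U_\mu\}$ are dual bases. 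Hence it suffices to establish: (a) $\langle[{\rm F}_iM],[N]\rangle=\langle[M],[{\rm E}_iN]\rangle$ and $\langle[{\rm E}_iM],[N]\rangle=\langle[M],[{\rm F}_iN]\rangle$ for $M\in\mc E_{n,\hZ}$ and $N\in\mc F_{n,\hZ}$; and (b) $E_i$ and $F_i$ are adjoint to each other under $\langle\cdot,\cdot\rangle_{q=1}$. Granting (a) and (b), for every $M\in\mc E_{n,\hZ}$ one has the chain $\langle\Psi_e([M]),\Phi([{\rm E}_iN])\rangle_{q=1}=\langle[M],[{\rm E}_iN]\rangle=\langle[{\rm F}_iM],[N]\rangle=\langle\Psi_e([{\rm F}_iM]),\Phi([N])\rangle_{q=1}=\langle 2F_i\Psi_e([M]),\Phi([N])\rangle_{q=1}=\langle\Psi_e([M]),2E_i\Phi([N])\rangle_{q=1}$, where the steps use the identification of the pairings, (a), the identification again, \propref{prop:translation on E}, and (b), in this order; since $\Psi_e$ is surjective and $\langle\cdot,\cdot\rangle_{q=1}$ is non-degenerate, $\Phi([{\rm E}_iN])=2E_i\Phi([N])$, and the computation for ${\rm F}_i$ is the mirror image, using the second identity in (a).

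Statement (b) is a computation internal to the $\UU$-modules $\EE^n$ and $\FF^n$, with no reference to $\mf q(n)$: combine \lemref{lem:action of U on E-basis} (the action of $E_i,F_i$ on the $E_\la$-basis of $\EE^n$) with the analogous formulas for the $\UU$-action on the $F_\la$-basis of $\FF^n$, and set $q=1$; every power of $q$ coming from the comultiplication \eqref{comul} or the $K_i$'s becomes $1$, and the surviving matrix entries match. Equivalently, (b) follows by specializing at $q=1$ an adjointness identity for the $\UU$-action on $\widehat\TT^n$ with respect to the form \eqref{bi:form} (of the kind used in \cite{Br2}), the $K$-twist disappearing because $K_i$ acts as the identity at $q=1$.

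For (a), the key input is that ${\rm E}_i$ and ${\rm F}_i$ are biadjoint endofunctors of $\mc O_{n,\hZ}$, hence of $\mc E_{n,\hZ}$ and $\mc F_{n,\hZ}$: indeed $-\otimes S^r(V)$ and $-\otimes S^r(V^*)$ are biadjoint since $S^r(V)$ and $S^r(V^*)$ are finite dimensional with $S^r(V)^*\cong S^r(V^*)$ and the category is symmetric monoidal, while the block projections ${\rm pr}_\gamma$ are exact and self-biadjoint. Moreover $({\rm E}_iM)^*\cong{\rm F}_i(M^*)$, because $L(\la)^*\cong L(-w_0\la)$ forces $(-)^*$ to interchange the blocks $\mc O_\gamma$ and $\mc O_{-\gamma}$ while commuting with tensoring by the dually paired modules $S^r(V)$, $S^r(V^*)$. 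Feeding these two facts into the definition of $\langle\cdot,\cdot\rangle$ via $\dim_\C{\rm Hom}_\G(M^*,-)$ produces (a), once one checks that the parity normalizations ($1$ for type $\texttt M$ and $\hf$ for type $\texttt Q$) attached to $M$ and to ${\rm E}_iM$ (resp.\ ${\rm F}_iM$) are matched by the adjunction isomorphism. This compatibility of the $\texttt M/\texttt Q$-types --- equivalently, the compatibility of the Clifford-module structure underlying the $\hf$-factor with the unit and counit of the biadjunction --- is the main obstacle; it is the $\mf q(n)$-analogue of the bookkeeping in \cite[Lemmas~4.45--4.47]{Br2}, and I would carry it out exactly as there, by evaluating the adjunction (co)unit against the Clifford action on the spaces $W_\la$.
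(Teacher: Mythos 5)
Your overall strategy---transporting Proposition~\ref{prop:translation on E} through the pairing between $K(\mc E_{n,\hZ})$ and $K(\mc F_{n,\hZ})$ and concluding by non-degeneracy and surjectivity of $\Psi_e$---is exactly what the paper does. However, both of your adjointness inputs (a) and (b) are false in this setting, and the two errors do not simply cancel: your chain breaks already at its second equality. The reason is that the form $\langle\cdot,\cdot\rangle$ of \eqref{bi:form} is the $\sigma$-\emph{twisted} form, $\langle u,v\rangle=(u,\sigma(\overline v))$ with $\sigma(M_\la)=M_{-\la}$; the contravariance it satisfies (cited in the paper's proof as the analogue of \cite[Lemma 2.21]{Br2}) is $\langle uv,w\rangle=\langle v,\sigma(u)w\rangle$, and $\sigma(E_i)=E_i$, $\sigma(F_i)=F_i$. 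Hence each of $E_i$ and $F_i$ is \emph{self}-adjoint with respect to $\langle\cdot,\cdot\rangle$ (and its $q=1$ specialization), not adjoint to the other---contrary to (b). Already for $n=1$, where $\langle v_r,v_t\rangle=\delta_{r,-t}$, one has $\langle E_1v_{3/2},v_{-1/2}\rangle=1=\langle v_{3/2},E_1v_{-1/2}\rangle$ while $\langle v_{3/2},F_1v_{-1/2}\rangle=0$.

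The same cancellation defeats (a). You correctly observe that $(\ )^*$ swaps ${\rm E}_i$ and ${\rm F}_i$ (so $({\rm F}_iM)^*\cong{\rm E}_i(M^*)$) and that ${\rm E}_i$, ${\rm F}_i$ are biadjoint endofunctors. But feeding these into $\langle[M],[N]\rangle=c\,\dim_\C{\rm Hom}_\G(M^*,N)$ gives $\langle[{\rm F}_iM],[N]\rangle = c\,\dim{\rm Hom}_\G({\rm E}_i(M^*),N)=c\,\dim{\rm Hom}_\G(M^*,{\rm F}_iN)=\langle[M],[{\rm F}_iN]\rangle$: the swap of ${\rm E}_i$ and ${\rm F}_i$ coming from $(\ )^*$ and the swap coming from the adjunction undo each other, yielding self-adjointness of ${\rm E}_i$ and of ${\rm F}_i$, \emph{not} your cross-relation $\langle[{\rm F}_iM],[N]\rangle=\langle[M],[{\rm E}_iN]\rangle$. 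With (a) and (b) replaced by these self-adjointness identities, your computation closes exactly as the paper's:
\begin{equation*}
\langle\Psi_e([M]),\Phi([{\rm E}_iN])\rangle_{q=1}=\langle[{\rm E}_iM],[N]\rangle=\langle 2E_i\Psi_e([M]),\Phi([N])\rangle_{q=1}=\langle\Psi_e([M]),2E_i\Phi([N])\rangle_{q=1},
\end{equation*}
and the $\texttt M/\texttt Q$ bookkeeping you flag is handled as in \cite[Lemma 4.45]{Br2}.
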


\begin{proof} We shall only prove the first identity, as the proof for the second is analogous.
Let $N\in\mc F_{n,\hZ}$. For every $M\in\mc E_{n,\hZ}$ we have
\begin{align*}
\langle\Psi_e([M]), \Phi([{\rm E}_i N])\rangle_{q=1}&=\langle[M],[{\rm E}_iN]\rangle = \langle [{\rm E}_iM],[N]\rangle = \langle\Psi_e([{\rm E_i}M]),\Phi([N])\rangle_{q=1}\\
&= \langle2 E_i\Psi_e([M]),\Phi([N])\rangle_{q=1} = \langle\Psi_e([M]),2 E_i\Phi([N])\rangle_{q=1}.
\end{align*}
The penultimate identity follows from Proposition \ref{prop:translation on E}, while the last identity follows from the fact that $\langle uv,w\rangle=\langle v,\sigma(u)w\rangle$ for $u,v\in\TT^n$ and $u\in\UU$, which can be established as \cite[Lemma 2.21]{Br2}. Now, since $M$ is arbitrary, we conclude that $\Phi({\rm E}_i[N])=2 E_i\Phi([N])$, as claimed.
\end{proof}

%%%%%
\subsection{Type $C$ Kazhdan-Lusztig conjecture for $\qn$}  %: a first version}
\label{subsec:KL:C}

Recall that the Kazhdan-Lusztig (or simply KL) conjecture for $\mc O_{n,\Z}$ (of $\qn$-modules of integer weights) was formulated in \cite[Section 4.8]{Br2}
(and recalled below as Conjecture \ref{conj:KL conj for integer}).
For the category $\mc O_{n,\hZ}$, we have the following conjecture.

 \begin{conj} [Conjecture $C$: a first version]
 \label{conj:half-integer}
The isomorphism  $\Psi:K(\mc O^\Delta_{n,\hZ}) \rightarrow \TT^{n}_\Z$ in \eqref{Psi2} sends
$[T(\la)]$ to a canonical basis element $T_\la(1):=1\otimes T_\la$, for $\la\in\La_\hZ$.
Equivalently, we have
\begin{equation*}\label{eq:char for tilting}
[T(\la)] = \sum_{\mu\preceq \la}t_{\mu \la}(1)[\Delta(\mu)].
\end{equation*}
\end{conj}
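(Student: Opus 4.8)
\medskip
\noindent\textbf{Proof proposal.}
The strategy is the standard one for tilting Kazhdan--Lusztig combinatorics in category $\mc O$ (compare \cite{Br2,CLW,BLW}): lift everything to a graded setting, pin down $[T(\la)]$ through the uniqueness characterization of the canonical basis, and then specialize $q\to 1$. Concretely, one first seeks a graded lift $\wt{\mc O}^\Delta_{n,\hZ}$ of $\mc O^\Delta_{n,\hZ}$, with graded standard objects $\wt\Delta(\la)$ and graded indecomposable tilting objects $\wt T(\la)$ normalized so that $\wt\Delta(\la)$ sits in internal degree $0$ at the bottom of the Verma flag of $\wt T(\la)$; the graded Grothendieck group then becomes a $\Z[q,q^{-1}]$-module with basis $\{[\wt\Delta(\la)]\}$, and the map $\Psi$ of \eqref{Psi2} lifts to a $\Z[q,q^{-1}]$-linear isomorphism $\wt\Psi\colon K(\wt{\mc O}^\Delta_{n,\hZ})\to\TT^n_{\Z[q^{\pm}]}$ with $\wt\Psi([\wt\Delta(\la)])=M_\la$. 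Such a grading should be produced either by realizing the blocks of $\mc O_{n,\hZ}$ through a cyclotomic Khovanov--Lauda--Rouquier type category, or through the uniqueness-of-categorification framework.

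Next I would establish a graded refinement of \propref{prop:translation on K}: the graded translation functors ${\rm E}^{(r)}_i,{\rm F}^{(r)}_i$ categorify the divided-power Chevalley generators of $\UU=\UU_q(\mf c_\infty)$ acting on $\TT^n$, the scalar $2^r$ appearing in \propref{prop:translation on K} (and originating from the Clifford structure on $\h_\odd$) being reabsorbed into internal-degree bookkeeping so that the bare operators $E^{(r)}_i,F^{(r)}_i$ act. This requires a graded version of \lemref{lem:translation} that tracks the internal degrees of the Verma subquotients of ${\rm E}^{(r)}_i\wt\Delta(\la)$ and ${\rm F}^{(r)}_i\wt\Delta(\la)$; the base case $n=1$ is supplied by the action on $\VV$ and is already visible in Example~\ref{ex:CB:n=2}.

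The identification $\wt\Psi([\wt T(\la)])=T_\la$ would then follow from Lusztig's uniqueness lemma (\cite[Lemma~24.2.1]{Lu}, invoked for \eqref{can:dual:can}) once three properties are in place. (i) \emph{Upper triangularity}: a Verma flag of $\wt T(\la)$ has $\wt\Delta(\la)$ at the bottom and all other subquotients of the form $\wt\Delta(\mu)$ with $\mu\prec\la$, so that $\wt\Psi([\wt T(\la)])\in M_\la+\sum_{\mu\prec\la}\Z[q,q^{-1}]M_\mu$ (linkage from Lemma~\ref{lem:central character}, together with a finiteness argument as in the remark following Proposition~\ref{prop:Bruhat PS=Br}). (ii) \emph{Bar invariance}: graded Ringel/contravariant duality fixes each $\wt T(\la)$ and induces, through $\wt\Psi$, exactly the bar involution $\ov{\phantom{x}}$ of $\widehat{\TT}^n$; one checks this on Verma objects / the $M_\la$ and propagates it using compatibility with the translation functors. (iii) \emph{Positivity}: the graded Verma multiplicities satisfy $(\wt T(\la):\wt\Delta(\mu))\in q\Z_{\ge0}[q]$ for $\mu\prec\la$, which would come from a ``degree $\ge 1$'' Ext-vanishing estimate for tilting modules in a graded highest weight category. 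Granting (i)--(iii), uniqueness forces $\wt\Psi([\wt T(\la)])=T_\la$, and setting $q=1$ yields $[T(\la)]=\sum_{\mu\preceq\la}t_{\mu\la}(1)[\Delta(\mu)]$.

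The hard part, and in fact the fatal one, is property (iii): it is equivalent to the positivity $t_{\mu\la}(q)\in q\Z_{\ge0}[q]$ for Lusztig's type $C$ canonical basis on $\TT^n$, and Tsuchioka's computation \cite{Tsu} shows this already fails in low rank. Consequently no honest grading on $\mc O^\Delta_{n,\hZ}$ can have graded tilting multiplicities equal to the $t_{\mu\la}(q)$, so the conjecture as worded cannot be proved along these lines; this is precisely the reason Lusztig's canonical basis of $\TT^n$ must be replaced by Webster's can$\oplus$nical basis in the corrected Conjecture~\ref{conj:half-integer2}. Steps (i), (ii) and the categorification statements nonetheless survive and form the robust part of the picture.
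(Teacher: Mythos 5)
This statement is a conjecture, not a theorem: the paper offers no proof of it, and in fact the remark immediately following it (citing Tsuchioka's computations) states that the polynomials $t_{\mu\la}(q)$ can have negative coefficients, so that Conjecture~\ref{conj:half-integer} is \emph{incorrect} as formulated and is retained only as motivation for the corrected Conjecture~\ref{conj:half-integer2}. Your proposal is therefore not, and does not claim to be, a proof; what you have written is an accurate diagnosis of why the standard graded-lift strategy must fail, and your identification of the failure point --- the positivity $(\,\wt T(\la):\wt\Delta(\mu)\,)\in q\Z_{\ge 0}[q]$ being equivalent to positivity of Lusztig's type $C$ canonical basis on $\TT^n$, which Tsuchioka's examples rule out --- agrees exactly with the paper's own reasoning for replacing Lusztig's canonical basis by Webster's can$\oplus$nical basis. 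Your steps (i) and (ii) (linkage via Lemma~\ref{lem:central character}, compatibility of translation functors with the $\UU$-action via Proposition~\ref{prop:translation on K}, and the $n=1$ base case) are consistent with the supporting evidence the authors themselves cite for the relevance of type $C$ combinatorics. In short: there is nothing to prove here, the paper proves nothing here, and your assessment that the statement cannot be established (indeed, is false) is the correct conclusion.
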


%Note that every $M\in \mc O_{n,\hZ}$ has a finite composition series, and hence $\{\,[L(\la)]\,|\,\la\in\La_{\hZ}\,\}$ is a $\Z$-basis of $K(\mc O_{n,\hZ})$.
Since we have
\begin{equation*}
(T(\la) : \Delta(\mu)) = [\Delta(-\mu):L(-\la)],
\end{equation*}
by \cite[Theorem 6.4]{Br3}, Conjecture \ref{conj:half-integer} is equivalent to
\begin{equation}
\label{eq:ML}
[\Delta(\la)] = \sum_{\mu\preceq\la}t_{-\la -\mu}(1)[L(\mu)],\quad
 [L(\la)]=\sum_{\mu\preceq\la}\ell_{\mu\la}(1)[\Delta(\mu)],
\end{equation}
where the second formula is possibly an infinite sum.

%%%%% \subsection{Modified type $C$ KL conjecture} \label{subsec:KLC2}

\begin{rem}
S.~Tsuchioka \cite{Tsu} has informed us that the canonical bases  on the tensor product of the natural representations
of type $B$ and $C$ fail to have positivity when $n=4$ and $n=6$, respectively. That is, there are examples of the polynomials $t_{\mu \la}(q)$ defined in \eqref{can:dual:can} and \eqref{TLMN} that have negative coefficients (and moreover $t_{\mu \la} (1) \not \in \N$);
his computation was subsequently confirmed by Brundan.
Hence Conjecture~\ref{conj:half-integer} (and Brundan's original ``type $B$" conjecture in \cite{Br2} or Conjecture \ref{conj:KL conj for integer}) are incorrect.
Still we choose to keep this version of the conjecture (although incorrect), as it serves as a motivation for the ensuing discussion below.
\end{rem}

Nevertheless there are several crucial facts (such as linkage and type $B/C$ relations of translation functors on Grothendieck group level),
which support the relevance of the type $B/C$ canonical bases.

This is reminiscent of the breakdown of Lusztig conjecture on irreducible characters of algebraic groups over fields of not too large prime characteristics.
A corrected formulation of the tilting character solution for algebraic groups has been given by Riche-Williamson \cite{RW} in terms of the
$p$-canonical bases of affine Hecke algebras (which is proven in type $A$).
Note that the $p$-canonical basis of affine Hecke algebra is very much in the spirit of
the categorical canonical basis arising from Khovanov-Lauda-Rouquier  (KLR) categorification.
Therefore, it is natural to hope for a similar corrected KL conjecture in our superalgebra setting.

Recall  from \eqref{eq:V} that $\VV$ is the natural representation of the type $C$ quantum group $\UU$,
and that $\TT^n =\VV^{\otimes n}$ (with the $\Z[q^{\pm 1}]$-form $\TT^{n}_{\Z[q^{\pm 1}]}$).
Recall that Webster \cite{Web} has defined a categorification of a tensor product of  highest weight integrable modules of a quantum group (of type $C$).
In particular we have a $2$-representation $\mathfrak V^n$ of the KLR 2-category which categorifies the $\UU$-module $\VV^{\otimes n}$, that is, there is an isomorphism of
$\UU_{\Z[q^{\pm 1}]}$-modules  $\kappa: K_0(\mathfrak V^n) \stackrel{\cong}{\rightarrow} \TT^{n}_{\Z[q^{\pm 1}]}$. (Here $K_0$ denotes the
 split Grothendieck group.)
Denote by $\{ T_\la^{\mathfrak c} \vert \la \in \La_{\hZ} \}$ the {\em can$\oplus$nical} basis (or the orthodox basis in Webster's terminology)
of $K_0(\mathfrak V^n)$ consisting of the isoclasses of the self-dual indecomposable $1$-morphisms.
Note that Webster worked with quantum groups of type $C_n$ of finite rank, and $\mathfrak V^n$ or $K_0(\mathfrak V^n)$ here
arises naturally as a stable limit when the rank $n$ goes to infinity.
%\red{More precisely, $\mathfrak V^n$ is indeed constructed in \cite{Web} when $\UU$ is of finite rank, and hence $\VV$ is finite-dimensional.
%We expect that $\mathfrak V^n$ or $K_0(\mathfrak V^n)$ has a well-defined stable limit as the rank goes to infinity, though it is not a trivial process.}
We denote the basis of $K_0(\mathfrak V^n)$ corresponding to $\{ M_\la \}$ in $\TT^{n}_\Z$ by  $\{ M_\la^{\mathfrak c} \}$, i.e., $\kappa ( M_\la^{\mathfrak c} ) =M_\la.$
Denote
\begin{equation}\label{can:dual:canC2}
\begin{split}
T_\la^{\mathfrak c}=M_\la^{\mathfrak c}+\sum_{\mu\prec\la}t_{\mu\la}^{\mathfrak c} (q) M_\mu^{\mathfrak c}.  %,\quad
%L_\la^{\mathfrak c}=M_\la^{\mathfrak c} +\sum_{\mu\prec\la}\ell_{\mu\la}(q)M_\mu^{\mathfrak c}.
\end{split}
\end{equation}
It is known \cite{Web2} that $t_{\mu\la}^{\mathfrak c} (q) \in \N[q,q^{-1}]$. Denote $K_0(\mathfrak V^n)_\Z = \Z \otimes_{\Z[q^{\pm 1}]} K_0(\mathfrak V^n)$,
where $q$ acts on $\Z$ as $1.$ Recall the isomorphism $\Psi:K(\mc O^\Delta_{n,\hZ}) \rightarrow \TT^{n}_\Z$ in \eqref{Psi2}.

 \begin{conj} [Conjecture $C$]
 \label{conj:half-integer2}
The isomorphism  $\kappa^{-1}\circ \Psi:K(\mc O^\Delta_{n,\hZ}) \rightarrow K_0(\mathfrak V^n)_\Z$ sends
$[T(\la)]$ to a can$\oplus$nical basis element $T_\la^{\mathfrak c}(1) :=1\otimes T_\la^{\mathfrak c}$, for $\la\in\La_\hZ$.
Equivalently, we have
\begin{equation*}\label{eq:char for tilting}
[T(\la)] = \sum_{\mu\preceq \la}t_{\mu \la}^{\mathfrak c}(1)[\Delta(\mu)].
\end{equation*}
\end{conj}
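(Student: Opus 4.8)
The plan is to realize $\mc O^{\Delta}_{n,\hZ}$, together with its ambient abelian category $\mc O_{n,\hZ}$, as a \emph{tensor product categorification} of the $\UU$-module $\VV^{\otimes n}$ in the spirit of Losev--Webster and Brundan--Losev--Webster \cite{BLW}, and then to invoke a uniqueness theorem for such categorifications to transport the can$\oplus$nical basis from Webster's model $\mathfrak V^n$ onto the tilting modules $T(\la)$. Since $\mathfrak V^n$ is, by construction, a tensor product categorification in which the isoclasses of self-dual indecomposable $1$-morphisms represent the can$\oplus$nical basis elements $T_\la^{\mathfrak c}$, any equivalence of categorical $\UU$-representations $\mathfrak V^n \xrightarrow{\ \sim\ } \mc O^{\Delta}_{n,\hZ}$ matching standard objects must send self-dual indecomposables to indecomposable tiltings $T(\la)$; decategorifying and specializing $q=1$ via $\kappa^{-1}\circ\Psi$ then yields $[T(\la)] = \sum_{\mu\preceq\la} t_{\mu\la}^{\mathfrak c}(1)[\Delta(\mu)]$, and via BGG reciprocity (\cite[Theorem~6.4]{Br3}) also the equivalent irreducible character formula \eqref{eq:ML}.

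First I would upgrade the translation functors ${\rm E}_i, {\rm F}_i$ of Section~\ref{sec:O} to a genuine action of the type $C$ Kac--Moody $2$-category attached to $\mf c_\infty$. On Grothendieck groups this action is already recorded in Proposition~\ref{prop:translation on K}, and integrability comes from the linkage principle (Lemma~\ref{lem:central character}); what remains is to produce the $2$-morphisms (a dot, a crossing, and a boundary endomorphism at the node $\alpha_0$) and check the defining relations. These should be extracted from the action of the affine Hecke--Clifford (Sergeev) algebras on tensor powers of the natural module $V=\C^{n|n}$, the natural super analogue of the nil-Hecke/KLR construction used in the $\gl(m|n)$ case \cite{CLW,BLW}; the appearance of the Clifford algebra, and with it the factors $2$ and $2^r$ in Lemma~\ref{lem:translation}, is exactly what forces the \emph{doubled} (type $C$, rather than type $A$) root datum. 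Next I would identify a suitable completion of $\bigoplus_{\gamma\in P}K(\mc O_\gamma)$ (see \eqref{eq:block decomp O}) with $\VV^{\otimes n}$ as a $\UU_\Z$-module, sending $[\Delta(\la)]\mapsto M_\la$, which is the isomorphism $\Psi$ of \eqref{Psi2}. Finally I would verify the tensor-product-categorification axioms: the Verma modules $\Delta(\la)$ are the standard objects, the functors ${\rm E}^{(r)}_i, {\rm F}^{(r)}_i$ are standardly stratified with Verma flags as dictated by Lemma~\ref{lem:translation}, and the top layer of the appropriate coweight filtration recovers one copy of $\VV$ in each tensor slot.

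With $\mc O^{\Delta}_{n,\hZ}$ so presented, the uniqueness of tensor product categorifications (the ``unicity of categorification'' of \cite{BLW}) provides a strongly equivariant equivalence with $\mathfrak V^n$ under which standard objects correspond; hence indecomposable tilting modules correspond to self-dual indecomposable $1$-morphisms, and Conjecture~\ref{conj:half-integer2} follows. Along the way one also wants to confirm that the categorical module structure is genuinely of type $C$ and not type $B$---the translation-functor relations and the linkage of Lemma~\ref{lem:central character} single out $\mf c_\infty$---thereby also correcting Brundan's original type $B$ formulation \cite{Br2}.

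The hard part will be that $\mc O_{n,\hZ}$ is \emph{not known to be a highest weight category}: this is precisely the open Question~\ref{q:HWC}, so the off-the-shelf Losev--Webster uniqueness theorem does not directly apply. One therefore has to work with the weaker notion of a standardly stratified (or ``super'') tensor product categorification adapted to $\qn$, where the genuinely new feature is the presence of type~$\texttt{Q}$ blocks and the attendant Clifford algebra responsible for the factors of $2$. Proving that this weaker structure still enjoys a uniqueness theorem strong enough to pin down all tilting (equivalently, all irreducible) characters, together with the full verification of the $2$-categorical relations for the type $C$ action on $\mc O_{n,\hZ}$, is where the substantive work lies; this is what is carried out in the forthcoming paper of Brundan and Davidson \cite{BN}.
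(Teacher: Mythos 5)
The statement you are trying to prove is Conjecture~\ref{conj:half-integer2}, and the paper does \emph{not} prove it: it is formulated as a conjecture, and the only information the paper gives about its proof is the ``Notes added'' remark that a proof of Conjecture~\ref{conj:half-integer2} will appear in forthcoming work of Brundan and Davidson \cite{BN} ``using the unicity of categorification.'' So there is no proof in the paper against which to compare. What you have written is a roadmap consistent with that announced strategy, not a proof.

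Concretely, the gaps in your proposal are real and are ones you yourself flag, but they are fatal if the goal is an actual proof within the confines of this paper. First, upgrading the translation functors ${\rm E}_i,{\rm F}_i$ to a $2$-representation of the type~$C$ Kac--Moody $2$-category is not carried out here; Proposition~\ref{prop:translation on K} only produces the decategorified $\UU_\Z$-action, and no $2$-morphisms (nil-Hecke or Sergeev-algebra endomorphisms, adjunctions, or the zig-zag/sl$_2$ relations at the node $\alpha_0$) are constructed or verified in the paper. Second, the Losev--Webster/\cite{BLW} uniqueness theorem you want to invoke presupposes a graded highest weight (or at least standardly stratified) ambient category, and the paper explicitly leaves open whether $\mc E_{n,\hZ}$ and, more to the point, $\mc O_{n,\hZ}$ enjoy the required stratification (Question~\ref{q:HWC}); without that one cannot match standard objects, let alone match tiltings, under an abstractly-produced equivalence. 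Third, even granting a suitable categorical action, one still has to show that it is of type~$C$ in the precise $2$-categorical sense (not just that the Grothendieck-group action is), and that the top factor of the appropriate coweight filtration on each tensor slot is $\VV$ rather than some other weight module; none of this is established here. Since your argument defers exactly these points to \cite{BN}, it reproduces the open status of the conjecture rather than closing it.
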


%\begin{rem}
Recall by Corollary~\ref{cor:ACB=CCB} that the type $C$ canonical basis of the following subspace
\begin{equation}
\label{TTs:min}
\TT^{n}_{\min} := \bigoplus_{l+m=n} \TT^{n}_{{\bf s}_{\min}(l,m)} \subset \TT^{n}
\end{equation}
are indeed type $A$ canonical basis (see \eqref{TTs} and \eqref{s:min} for notation). In particular, this set of canonical basis has positivity property, and we conjecture that they coincide with the corresponding set of Webster's can$\oplus$nical basis elements.
%\end{rem}

\begin{conj} [Conjecture~$C=A$]
 \label{conj:KLC=A}
The tilting characters in the full subcategory ${}_{\min} \mc O^\Delta_{n,\hZ}$ of $\mc O^\Delta_{n,\hZ}$ {\rm (}which correspond  to the subspace \eqref{TTs:min}{\rm )}
are solved by the  type $A$ canonical bases in Corollary~\ref{cor:ACB=CCB}.
\end{conj}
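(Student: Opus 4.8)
\textbf{The plan} is to reduce Conjecture~\ref{conj:KLC=A} to the already established Kazhdan--Lusztig theory for the general linear Lie superalgebras $\mathfrak{gl}(l|n-l)$ of \cite{CLW,BLW}, with Corollary~\ref{cor:ACB=CCB} serving as the bridge on the level of canonical bases. Note first that, by Corollary~\ref{cor:ACB=CCB}, for $\la\in\La_\hZ$ with ${\rm sgn}(\la)={\bf s}_{\min}(l,m)$ one has $T_\la=T^{\mf a}_\la$; thus Conjecture~\ref{conj:KLC=A} is precisely the restriction of the (globally false, by \cite{Tsu}) Conjecture~\ref{conj:half-integer} to the weights of minimal signature, and it is on this very set that the obstruction to positivity disappears. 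Concretely, one must show $\Psi([T(\la)])=T^{\mf a}_\la(1)$, i.e. $(T(\la):\Delta(\mu))=t^{\mf a}_{\mu\la}(1)$ for all $\mu$, whenever ${\rm sgn}(\la)$ is of the form ${\bf s}_{\min}(l,m)$. For $l\in\{0,n\}$ this is essentially the content of the Frisk--Mazorchuk equivalences \cite{F,FM}, so the new content is the range $0<l<n$.

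\textbf{Steps.} First I would use the block decomposition \eqref{eq:block decomp O} to describe ${}_{\min}\mc O^\Delta_{n,\hZ}$ blockwise, and record via Lemma~\ref{lem:translation} that, although ${\rm E}_0$ and ${\rm F}_0$ need not preserve a single summand $\TT^n_{{\bf s}_{\min}(l,m)}$, the combinatorics they induce on the whole min family $\bigsqcup_{l+m=n}\TT^n_{{\bf s}_{\min}(l,m)}$ factors through the type $A$ root datum --- this is the representation-theoretic shadow of the identity $\psi=\psi_{\mf a}$ on $\TT^n_{\min}$ proved inside Proposition~\ref{thm:head:can:basis}. Second, I would construct a comparison functor $\Theta$ between (blocks of) ${}_{\min}\mc O^\Delta_{n,\hZ}$ and the corresponding category $\mc O$ for $\mathfrak{gl}(l|n-l)$ in Brundan's Borel convention, extending \cite{F,FM}, and check that $\Theta$ carries $\Delta^{\mathfrak{gl}}(\mu^\sharp)$ to $\Delta^{\mathfrak q}(\mu)$ (the $2^{\lceil\ell(\cdot)/2\rceil}$ factors being absorbed into a rescaling of the Grothendieck-group identification), hence $T^{\mathfrak{gl}}(\la^\sharp)$ to $T^{\mathfrak q}(\la)$, and that it intertwines the type $A$ categorical action with the restricted translation functors; here Proposition~\ref{prop:translation on K}, the $\UU^{\mf a}$-module structure of $\TT^n_{\min}$, Corollary~\ref{cor:ACB=CCB} and (for the finite-dimensional variant) Theorem~\ref{thm:ACB=CCB} identify all the bases in play. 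Finally, invoking \cite{CLW,BLW} one gets $(T^{\mathfrak{gl}}(\la^\sharp):\Delta^{\mathfrak{gl}}(\mu^\sharp))=t^{\mf a}_{\mu^\sharp\la^\sharp}(1)$; transporting through $\Theta$ and matching $t^{\mf a}_{\mu^\sharp\la^\sharp}=t^{\mf a}_{\mu\la}$ via the combinatorial identification (and Corollary~\ref{cor:ACB=CCB}) yields $\Psi([T^{\mathfrak q}(\la)])=T^{\mf a}_\la(1)$.

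\textbf{Main obstacle.} The hard part will be the construction of $\Theta$ for $0<l<n$: one must first even formulate, and then prove, an equivalence between the relevant blocks of $\mathfrak q(n)$ and of $\mathfrak{gl}(l|n-l)$, since the parabolic $\mathfrak p(\la)$ attached to a half-integral non-dominant $\la$ interacts subtly with the odd part of $\mathfrak q(n)$, and the atypical weights produced by ${\rm E}_0,{\rm F}_0$ (entries $\la_i=\pm\hf$) must be accommodated. A route that avoids this altogether is to take Conjecture~\ref{conj:half-integer2} --- a theorem of Brundan--Davidson \cite{BN} --- as input, so that $\Psi([T(\la)])=T^{\mathfrak c}_\la(1)$ in terms of Webster's can$\oplus$nical basis, and then prove that $T^{\mathfrak c}_\la=T_\la\,(=T^{\mf a}_\la)$ for $\la$ of minimal signature. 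This last reduces to the purely type $A$ statement that Webster's orthodox basis on a tensor product of the natural and conatural $\mf{sl}_\infty$-modules coincides with Lusztig's canonical basis; it should follow from the positivity of \cite{Web2} together with the unitriangular characterisation of the type $A$ canonical basis used in \cite{Br1,CLW}, the point being that a self-dual $\N[q]$-unitriangular vector whose underlying datum is of type $A$ automatically has off-diagonal coefficients in $q\N[q]$ and hence is Lusztig's canonical basis vector. In either approach the linear algebra of the canonical bases is already supplied by Corollary~\ref{cor:ACB=CCB}; what genuinely remains is to pin down the correct categorical input.
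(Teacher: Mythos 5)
This statement is an open \emph{conjecture} in the paper; there is no proof given against which to compare. The sentence immediately preceding Conjecture~\ref{conj:KLC=A} already isolates the missing piece: ``we conjecture that they [the type~$A$ canonical bases on $\TT^n_{\min}$] coincide with the corresponding set of Webster's can$\oplus$nical basis elements.'' Your second route is therefore exactly the avenue the paper envisions: feed in Conjecture~\ref{conj:half-integer2} (a theorem of \cite{BN}), then show $T^{\mathfrak c}_\la = T_\la = T^{\mf a}_\la$ for $\la$ of minimal signature via Corollary~\ref{cor:ACB=CCB}. Your first route, reducing to category equivalences $\Theta$ with $\mathfrak{gl}(l|n-l)$-blocks in the spirit of \cite{F,FM}, is a plausible alternative --- the paper's remark after Theorem~\ref{thm:ACB=CCB} points in that direction --- but it rests on an equivalence that is currently not known for $0<l<n$; even for $l\in\{0,n\}$ the Frisk / Frisk--Mazorchuk results cited in the paper concern (strongly) typical blocks only, so ``essentially the content of \cite{F,FM}'' claims more than is available (indeed whether $\mc E_{n,\hZ}$ is a highest weight category is Question~\ref{q:HWC}, left open).

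There are two genuine gaps in the second route as you sketch it. First, the passage from Webster's orthodox basis for the \emph{type~$C$} module $\VV^{\otimes n}$ to an orthodox basis of a \emph{type~$A$} tensor product is not formal. Corollary~\ref{cor:ACB=CCB} is an identity of bar involutions and Lusztig-type bases on the decategorified level; it does not automatically tell you that the indecomposable self-dual $1$-morphisms in the $\mf c_\infty$-KLR $2$-category supported on $\TT^n_{\min}$ restrict to the analogous objects in the $\mf{sl}_\infty$-KLR $2$-category. You would need a categorified Proposition~\ref{thm:head:can:basis}/Corollary~\ref{cor:ACB=CCB}, which is a nontrivial statement about $2$-representations, not linear algebra. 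Second, the assertion ``a self-dual $\N[q]$-unitriangular vector whose underlying datum is of type~$A$ automatically has off-diagonal coefficients in $q\N[q]$'' is false as a formal principle: for $\mu\prec\la$, $T_\la+(q+q^{-1})T_\mu$ is bar-invariant, unitriangular with leading coefficient $1$, and has all coefficients in $\N[q,q^{-1}]$, yet is not $T_\la$. Were such an argument valid one could equally apply it in type~$C$, contradicting Tsuchioka's computation. The genuine input is the (deep, geometric) identification of indecomposable projectives in KLR categorifications with canonical basis vectors in simply-laced type --- Varagnolo--Vasserot/Rouquier for $\UU^-$, and its tensor-product extension --- not a positivity-plus-unitriangularity argument. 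As a blueprint your proposal locates the right ingredients, but the stated justifications for both key reductions would not survive as written.
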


%%%%%
\subsection{Type $A$ Kazhdan-Lusztig conjecture for $\qn$}
\label{subsec:KL:A}

Recall from \cite{CMW} that
various category equivalences induced by twisting, odd reflection, and parabolic induction functor reduce the irreducible and tilting character problem in the BGG category of
$\mathfrak{gl}(m|n)$-modules of {\em arbitrary weights} to the setting of integer weights for which the Brundan's Kazhdan-Lusztig conjecture (and a theorem of \cite{CLW,BLW}) is applicable.
There are similar, though more complicated, category equivalences for $\G =\qn$, which are established by Chen \cite[Theorem 1.1]{Chen}.
By Chen's result, the problem of computing the characters of the irreducible and tilting modules of arbitrary weights in the BGG category for a queer Lie superalgebra  is reduced to the character problem
in the following three categories:
\begin{equation*}
\text{(i)}\ \ \mc O_{n,\Z} , \quad
\text{(ii)}\ \ \mc O_{n,\hZ},\quad
\text{(iii)}\ \ \mc O_{n,s^l}\ \  (s\in \C \setminus \tfrac{1}{2} \Z,\ l \in \{0,\ldots,n\}),
\end{equation*}
where $\mc O_{n,s^l}$ is the full subcategory of $\G$-modules in $\mc O_{n}$ with weights in
\begin{equation*}
\La_{s^l}:=\left\{\,\la=(\la_1,\ldots,\la_n)\in \mf h^*_\even\,  \Bigg\vert \,
\begin{array}{l}
\text{{\rm (1)} $\la_i \equiv s \!\mod \Z$ \quad for $1\le i \le l$,}       \\
\text{{\rm (2)} $\la_i \equiv -s \!\mod \Z$ \quad for $l+1\le i \le n$}       \\
\end{array}
\,\right\}.
\end{equation*}
Since the first two categories are covered by Conjecture \ref{conj:KL conj for integer} (see also Remark \ref{rem:remark on conj for integral weights}) and Conjecture \ref{conj:half-integer2} respectively, it remains to consider the category  $\mc O_{n,s^l}$.

Let $s\in \C \setminus \tfrac{1}{2} \Z$ and $l \in \{0,\ldots,n\}$.
Let $\VV_s :=\bigoplus_{i\in s+ \Z} \Q(q) v_i$ be the natural representation of $\UU_q(\mf{sl}_\infty)$, and let $\WW_s =\VV^*_s$ be its restricted dual. Note that
the uniform shift of the indices of $v_i$ by $s$ does not make a difference when we regard $\VV_s$ as a module over $\UU_q(\mf{sl}_\infty)$ instead of $\UU_q(\mf{gl}_\infty)$.
We should point out that $\UU_q(\mf{sl}_\infty)$ is different from $\UU^{\mf a}$ which is also of type $A$ but with one-sided infinity.
Set $m =n-l$. Consider the $\UU_q(\mf{sl}_\infty)$-module $\TT^{l|m}_s := \VV^{\otimes l}_s \otimes \WW^{\otimes m}_s$ with the standard monomial basis  $\{\,M_{\la}\,|\,\la\in \La_{s^l}\,\}$. We let the Bruhat ordering $\preceq_{\mf a}$ on $\La_{s^l}$ be the ``type $A$" Bruhat ordering as given in \cite[Section 2-b]{Br1}.
Then in a suitable completion of the $\UU_q(\mf{sl}_\infty)$-module $\TT^{l|m}_s$, there exist a canonical basis $\{\,T_{\la}\,|\,\la\in \La_{s^l}\,\}$ and
dual canonical basis $\{\,L_\la\,|\,\la\in \La_{s^l}\,\}$, which are bar invariant such that
\begin{equation}
%\begin{split}
T_\la=M_\la+\sum_{\mu\prec_{\mf a} \la}t_{\mu\la}^\aaf(q)M_\mu,\quad
L_\la=M_\la+\sum_{\mu\prec_{\mf a} \la}\ell_{\mu\la}^\aaf(q)M_\mu,
%\end{split}
\end{equation}
for  $t_{\mu\la}^\aaf(q)\in q\Z[q]$ and $\ell_{\mu\la}^\aaf(q)\in q^{-1}\Z[q^{-1}]$.
We set $t_{\la\la}^\aaf(q)=\ell_{\la\la}^\aaf(q)=1$. %, and $t_{\mu\la}^\aaf(q)=\ell_{\mu\la}^\aaf(q)=0$ unless $\mu\preceq\la$.
Now, we make the following conjecture for $\mc O_{n,s^l}$.

\begin{conj}  [Conjecture~$A$]
\label{conj:KL conj for s-weights}
We have the following character formulae in the category $\mc O_{n,s^l}$: for $\la\in\La_{s^l}$, we have
$$
[T(\la)] = \sum_{\mu\preceq \la}t_{\mu \la}^\aaf(1)[\Delta(\mu)],
\quad
[L(\la)]=\sum_{\mu\preceq \la}\ell_{\mu\la}^\aaf(1)[\Delta(\mu)].
$$
\end{conj}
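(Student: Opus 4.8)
The plan is to identify the category $\mc O_{n,s^l}$, equipped with its translation functors, with a block of the BGG category of integer-weight $\gl(l|m)$-modules, where $m=n-l$; for the latter the asserted character formulae are Brundan's Kazhdan--Lusztig conjecture \cite{Br1}, now a theorem of Cheng--Lam--Wang \cite{CLW} (see also \cite{BLW}). That $\gl(l|m)$ should govern $\mc O_{n,s^l}$ is already suggested by the linkage: when $s\notin\tfrac12\Z$, a weight $\la\in\La_{s^l}$ is atypical --- $\la_i+\la_j=0$ for some $i<j$ --- only if exactly one of $i,j$ lies in $\{1,\dots,l\}$, so the first $l$ coordinates play the role of the even variables and the last $m$ that of the odd variables of $\gl(l|m)$. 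A maximal-parabolic version has already been proved in \cite{CC}, giving a template, and the full statement is announced in \cite{BN} via the unicity of categorification; the route below follows that philosophy.

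First I would construct translation functors on $\mc O_{n,s^l}$. Using Sergeev's description of the central characters --- the analogue of \lemref{lem:central character} governing linkage in $\mc O_{n,s^l}$, with ${\rm wt}$ replaced by the appropriate weight function valued in the weight lattice of $\mf{sl}_\infty$ --- the category splits into blocks indexed by $\mf{sl}_\infty$-weights, and tensoring with the natural module $V$, its dual $V^*$, and their (super)symmetric powers, followed by projection onto blocks, yields exact functors ${\rm E}^{(r)}_i,{\rm F}^{(r)}_i$. Imitating \lemref{lem:translation} and \propref{prop:translation on K}, one shows that $K(\mc O^\Delta_{n,s^l})$ becomes, after dividing out the powers of $2$ coming from the Clifford structure on $\h_\odd$, the integral form at $q=1$ of the $\UU_q(\mf{sl}_\infty)$-module $\TT^{l|m}_s$, under $[\Delta(\la)]\mapsto M_\la(1)$; and one checks, as in \propref{prop:Bruhat PS=Br}, that the linkage order on $\La_{s^l}$ coincides with the type $A$ Bruhat order $\preceq_{\mf a}$ of \cite[Section 2-b]{Br1}.

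Next I would verify that this data exhibits $\mc O_{n,s^l}$ as a highest weight (tensor product) categorification of $\TT^{l|m}_s$ in the sense of \cite{Web}: the standard objects are the Verma modules $\Delta(\la)$, the tiltings are the $T(\la)$, and ${\rm E}_i,{\rm F}_i$ are biadjoint up to degree shifts and satisfy the $\mf{sl}_2$-categorification relations and then the full KLR relations. Granting this, the unicity of tensor product categorifications (\cite{Web}; cf.\ \cite{BN}) identifies $\mc O_{n,s^l}$, as a highest weight category carrying a categorical $\mf{sl}_\infty$-action, with Webster's tensor product categorification of $\TT^{l|m}_s$, and hence with the corresponding block of $\gl(l|m)$-mod, which is another such categorification by \cite{BLW}. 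Under this identification indecomposable tilting objects correspond to can$\oplus$nical basis elements, which in type $A$ coincide with Lusztig's canonical basis $\{T_\la\}$ by positivity --- the analogue here of \thmref{thm:ACB=CCB}. Transporting Brundan's theorem through the equivalence gives $[T(\la)]=\sum_{\mu\preceq_{\mf a}\la}t^\aaf_{\mu\la}(1)[\Delta(\mu)]$, and the companion formula for $[L(\la)]$ follows from the standard interplay of tilting and simple characters in a highest weight category, namely the analogue of $(T(\la):\Delta(\mu))=[\Delta(-\mu):L(-\la)]$ in \cite[Theorem 6.4]{Br3} together with the bar-duality between $\{T_\la\}$ and $\{L_\la\}$.

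The main obstacle will be the second step: verifying that $\mc O_{n,s^l}$ really satisfies the axioms of a tensor product categorification --- in particular biadjointness and the KLR relations for the translation functors --- while controlling the \texttt{M}/\texttt{Q} type dichotomy and the Clifford algebra structure on $\h_\odd$ (the source of the recurring factors of $2$) so that the unicity theorem applies cleanly. An alternative, more hands-on route would bypass the unicity theorem and construct an equivalence $\mc O_{n,s^l}\simeq\gl(l|m)$-mod directly, extending to arbitrary $l$ the $l\in\{0,n\}$ constructions of Frisk--Mazorchuk \cite{FM} (cf.\ the remark following \thmref{thm:ACB=CCB}); the difficulty there is the same, namely matching the odd-reflection and parabolic-induction combinatorics across the two superalgebras uniformly in $l$.
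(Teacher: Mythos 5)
The statement you are trying to prove is a \emph{conjecture} in the paper, not a theorem; there is no proof in the paper for you to be compared against. What the paper does say, in the ``Notes added'' at the end of the introduction, is that a maximal-parabolic case was proved in \cite{CC}, and that the full version of Conjecture~\ref{conj:KL conj for s-weights} ``will appear in a forthcoming work by Brundan and Davidson \cite{BN} using the unicity of categorification.'' Your proposal is essentially a sketch of exactly that route (Webster tensor-product categorification of $\TT^{l|m}_s$, unicity, transport of Brundan--CLW--BLW through the equivalence), so at the level of strategy it is aligned with the approach the authors attribute to \cite{BN}, and your preliminary observation about atypicality in $\La_{s^l}$ forcing exactly one index in $\{1,\dots,l\}$ is correct and is indeed the heuristic behind the appearance of $\gl(l|m)$.

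That said, what you have written is a plan, not a proof, and you acknowledge as much: the load-bearing step is verifying that $\mc O_{n,s^l}$ with your translation functors is a tensor-product categorification in the sense required by the unicity theorem (biadjointness, KLR relations, highest-weight structure compatible with the order $\preceq_{\mf a}$), and that step is stated as an ``obstacle'' rather than carried out. There is also a subtlety you should not gloss over: the unicity results as stated in \cite{Web}, \cite{BLW} are formulated for usual (non-super) abelian categories carrying categorical actions, while $\mc O_{n,s^l}$ has genuinely super features (type \texttt{Q} simples, the Clifford structure on $\h_\odd$, the ubiquitous factors of $2$); one must either pass to an even subcategory/clothing that kills these features or verify that the unicity machinery tolerates them. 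This is precisely what \cite{BN} had to address, and it is not a formality. So the proposal is a reasonable and well-informed roadmap consistent with the literature the paper points to, but it should not be read as a proof; in particular nothing in the present paper supplies the missing verification, since the paper only records the conjecture.
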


\begin{rem}
One may verify that the translation functors on the category $\mc O_{n,s^l}$ satisfy the Serre relations for the Lie algebra $\mf{sl}_\infty$.
Also the Bruhat ordering in $\mc O_{n,s^l}$ is consistent with the algebraic Bruhat ordering from the $\UU_q(\mf{sl}_\infty)$-module $\TT^{l|n-l}_s$.
\iffalse
{Probably Conjecture~\ref{conj:KL conj for s-weights} would be somewhat easier to be proved,
% than Conjecture~\ref{conj:half-integer} and Brundan's  Conjecture \ref{conj:KL conj for integer},
as the categorification technique (cf. \cite{BLW}) is more likely to be directly applicable.
Conjecture~\ref{conj:KL conj for s-weights} also suggests various possible equivalences of highest weight categories.}
\fi
\end{rem}

%%%%%
%%%%%
\appendix{}
\section{Category $\mc O_{n,\Z}$ and canonical bases}
\label{sec:notations for B}

Recall that $\mc O_{n,\Z}$  denotes the full subcategory of $\mc O_n$ consisting of integer weight modules.
In this Appendix, we review Brundan's KL conjecture for $\mc O_{n,\Z}$, and suggests an enhancement for it using canonical basis arising from the quantum covering group.

\subsection{Brundan's Kazhdan-Lusztig conjecture for $\mc O_{n,\Z}$}
\label{subsec:BGG for integer weights}

We first give a brief review on the results in \cite{Br2}. We first follow the settings and notations in Section \ref{sec:O} for representations of $\qn$.

Let $P^{\mf b}=\bigoplus_{a\in\Z_+}\Z \delta_a$ with a symmetric bilinear form $(\ep_a,\ep_b)=2\delta_{ab}$, for $a, b\in \Z_{+}$. Let $\mf b_\infty$ be the Kac-Moody Lie algebra of type $B$ of infinite rank with associated Dynkin diagram and simple roots given by

\begin{center}
\hskip -1cm  \setlength{\unitlength}{0.16in}
\begin{picture}(24,4)
%\put(2,2){\makebox(0,0)[c]{${\mf c}_{\infty}\ :$}}

\put(5.6,2){\makebox(0,0)[c]{$\bigcirc$}}
\put(8,2){\makebox(0,0)[c]{$\bigcirc$}}
\put(10.4,2){\makebox(0,0)[c]{$\bigcirc$}}
\put(14.85,2){\makebox(0,0)[c]{$\bigcirc$}}
\put(17.25,2){\makebox(0,0)[c]{$\bigcirc$}}
\put(19.4,2){\makebox(0,0)[c]{$\bigcirc$}}
\put(8.35,2){\line(1,0){1.5}} \put(10.82,2){\line(1,0){0.8}}
\put(13.2,2){\line(1,0){1.2}} \put(15.28,2){\line(1,0){1.45}}
\put(17.7,2){\line(1,0){1.25}} \put(19.81,2){\line(1,0){0.9}}
\put(6.8,1.97){\makebox(0,0)[c]{$\Longleftarrow$}}
\put(12.5,1.95){\makebox(0,0)[c]{$\cdots$}}
\put(21.5,1.95){\makebox(0,0)[c]{$\cdots$}}
\put(5.6,1){\makebox(0,0)[c]{\tiny ${\alpha}_0$}}
\put(8,1){\makebox(0,0)[c]{\tiny ${\alpha}_1$}}
\put(10.4,1){\makebox(0,0)[c]{\tiny ${\alpha}_2$}}
\put(15,1){\makebox(0,0)[c]{\tiny ${\alpha}_{k-1}$}}
\put(17.4,1){\makebox(0,0)[c]{\tiny ${\alpha}_k$}}
\put(19.8,1){\makebox(0,0)[c]{\tiny ${\alpha}_{k+1}$}}
\end{picture}
\end{center} \vskip -5mm
\begin{equation*}
\begin{split}
& \alpha_0=-\delta_{1} ,\ \ \alpha_i=\delta_{i}-\delta_{i+1} \ (i\geq 1).
\end{split}
\end{equation*}
Let $t$ be an indeterminate, and let $\UU^{\mf b}=\UU_t(\mf b_\infty)$ be the associated quantum group defined as in \eqref{eq:defining relations for QG} over $\Q(t)$.
For $n\geq 1$, we put
\begin{equation}\label{eq:Lambda over integer}
\begin{split}
&\La_{\Z}:=\{\,\la=(\la_1,\ldots,\la_n)\,|\,\la_i\in\Z, \ \forall i\,\},\\
&\La^{\geqslant}_{\Z}:=\{\,\la \in \La_{\Z}\,|\,\la_1\ge\la_2\ge\cdots\ge\la_n\,\},\\
&\La^{+}_{\Z}:=\{\,\la \in\La^{\geqslant}_{\Z}\,|\, \la_i=\la_{i+1}\text{ implies }\la_i=0, \ \forall i \,\},\\
&\La^{\times}_{\Z}:=\{\,\la\in \La_{\Z}\,|\,\la_i\neq 0, \ \forall i \,\},\\
&\La^{+,\times}_{\Z}:=\La^{+}_{\Z}\cap \La^{\times}_{\Z}.
\end{split}
\end{equation}
Let $\succeq$ be the Bruhat ordering on $\Lambda_\Z$ \cite[Section 2.3]{Br2}, which is defined similarly as in Section \ref{subsec:bruhat ordering} with respect to the simple roots of ${\mf b}_\infty$ in $P^{\mf b}$, and let $\succcurlyeq$  the one on $\La^{\geqslant}_\Z$ given in \cite[Lemma 2.1]{PS2}.
For $\la, \mu\in \La^{\geqslant}_{\Z}$, we have $\la\succeq \mu$ if and only if $\la\succcurlyeq \mu$ \cite[Proposition 3.3]{CK}.

Let $\VV^{\mf b}=\bigoplus_{a\in\Z}\Q(t)v_a$ be the natural representation of $\UU^{\mf b}$. For $n\geq 1$, let $\TT^{n,{\mf b}}:= (\VV^{\mf b})^{\otimes n}$ with $\Q(t)$-bases $\{\,N_\la:=v_{\la_1}\otimes \cdots\otimes v_{\la_n}\,|\,\la\in \La_\Z\,\}$ and $\{\,M_\la:=(t+t^{-1})^{-z(\la)}N_\la\,|\,\la\in\La_\Z\,\}$, where $z(\la)$ is the number of zero parts in $\la$.
Define $\widehat{\TT}^{n,{\mf b}}$ to be the completion of $\TT^{n,{\mf b}}$ (on which a bar involution is defined) as described in Sections \ref{subsec:completion} and \ref{subsec:canonical basis of Tn}.

Let $\{\,T_\la\,|\,\la\in\La_\Z\,\}$ and $\{\,L_\la\,|\,\la\in\La_\Z\,\}$ denote the {\em canonical basis} and {\em dual canonical basis}, respectively, which are the unique bar-invariant topological $\Q(t)$-bases of $\widehat{\TT}^{n,{\mf b}}$ such that
\begin{align}
\label{TLMN}
T_\la=\sum_{\mu}t_{\mu\la}(t)N_\mu,\quad
L_\la=\sum_{\mu}\ell_{\mu\la}(t)M_\mu,
\end{align}
with $t_{\la\la}(t)=\ell_{\la\la}(t)=1$, $t_{\mu\la}(t)\in t\Z[t]$, $\ell_{\mu\la}(t)\in t^{-1}\Z[t^{-1}]$ for $\mu\prec\la$, and $t_{\mu\la}(t)=\ell_{\mu\la}(t)=0$ otherwise  \cite[Theorem 2.22]{Br2}.

\begin{rem}\label{rem:completions}{\rm The completion  $\widehat{\TT}^{n,\mf b}$ here are different from the one in \cite{Br2}. More precisely, our completion is defined following the construction \cite[Section 3.2]{CLW} so called {\rm B-completion}, and it can be regarded as a proper subspace of the Brundan's completion.
For example, $\sum_{k\geq 1}M_{(0,k)}$ belongs to not $\widehat{\TT}^{n,\mf b}$ but   the completion in \cite{Br2}.
Hence by the uniqueness of the (dual) canonical bases, we may identify $\{\,T_\la\,|\,\la\in\La_\Z\,\}$ and $\{\,L_\la\,|\,\la\in\La_\Z\,\}$ with those in \cite{Br2}.}
\end{rem}

The following KL conjecture for the BGG category $\mc O_{n,\Z}$  (of integer weight modules)
is due to Brundan \cite[Section 4.8]{Br2}.
\begin{conj}
\label{conj:KL conj for integer}
For $\la\in \La_{\Z}$, we have
\begin{equation*}
[T(\la)] = \sum_{\mu\preceq \la}t_{\mu \la}(1)2^{z(\mu)}[\Delta(\mu)],
\quad
[L(\la)]=\sum_{\mu\preceq\la}\ell_{\mu\la}(1)[\Delta(\mu)].
\end{equation*}
\end{conj}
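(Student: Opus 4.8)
The natural plan is to adapt the categorification strategy of \cite{CLW, BLW} --- which established Brundan's Kazhdan--Lusztig conjecture for $\mathfrak{gl}(m|n)$ --- to the queer setting, running parallel to what Brundan and Davidson do in \cite{BN} for the neighbouring categories $\mc O_{n,\hZ}$ and $\mc O_{n,s^l}$. The first step is the numerical categorification. Using the $\La_\Z$-analogue of Lemma~\ref{lem:central character} (Sergeev's central characters) to decompose $\mc O_{n,\Z}$ into blocks, and the weight combinatorics of $S^r(V)$, $S^r(V^*)$ together with the Littlewood--Richardson rule --- exactly as in the proof of Proposition~\ref{prop:translation on E} --- to compute matrix coefficients, one shows that the translation functors ${\rm E}^{(r)}_i,{\rm F}^{(r)}_i$ on $\bigoplus_{n\ge 0}\mc O^\Delta_{n,\Z}$ realize, at $t=1$, the divided powers of the Chevalley generators of $\UU^{\mf b}=\UU_t(\mf b_\infty)$ acting on $\bigoplus_n\TT^{n,\mf b}$; the powers of $2$ and the $2^{z(\la)}$ normalization are absorbed by the Clifford structure of $W_\la$ and the change of basis $M_\la=(t+t^{-1})^{-z(\la)}N_\la$. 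This upgrades an integer-weight analogue of $\Psi$ in \eqref{Psi2} to an isomorphism $K(\mc O^\Delta_{n,\Z})\cong\TT^{n,\mf b}_\Z$ intertwining the two $\UU^{\mf b}$-actions specialized at $t=1$.

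The second step is to lift this Grothendieck-group statement to a genuine categorical action and invoke a unicity theorem. One would equip $\bigoplus_n\mc O_{n,\Z}$ with a $2$-representation of the KLR $2$-category of type $\mf b_\infty$ by writing down the natural transformations between compositions of translation functors and checking the KLR relations, compare this with Webster's categorified tensor product $\mathfrak V^n$ of natural modules \cite{Web}, and apply a uniqueness-of-categorification argument in the spirit of \cite{BLW, BN} to conclude that $\Psi$ matches the indecomposable tilting modules $[T(\la)]$ with the self-dual indecomposable basis of $K_0(\mathfrak V^n)$ and the simples $[L(\la)]$ with its dual. The irreducible character formula then follows from the duality between canonical and dual canonical bases (cf.~\eqref{eq:TL pairing}, \cite[Lemma 2.25]{Br2}), and the tilting formula follows by BGG reciprocity $(T(\la):\Delta(\mu))=[\Delta(-\mu):L(-\la)]$ \cite[Theorem 6.4]{Br3}. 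A second route, closer to \cite{Br2, CLW}, is to establish a character-preserving equivalence between blocks of $\mc O_{n,\Z}$ and singular-parabolic blocks of integral category $\mc O$ for a general linear Lie (super)algebra, stabilized in the rank, and to read off the formulas from the classical Kazhdan--Lusztig theorem.

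The hard part --- and the reason the statement \emph{as literally worded} must fail --- is that the distinguished basis produced by this procedure is Webster's \emph{orthodox} (can$\oplus$nical) basis, whose transition polynomials $t^{\mathfrak c}_{\mu\la}$ (positive, $\in\N[t,t^{-1}]$ by \cite{Web2}) coincide with Lusztig's $t_{\mu\la}$ of \eqref{TLMN} only when the categorification is ``Koszul/standard'' --- true in type $A$, false in type $B$. Tsuchioka's computation (see the Remark after Conjecture~\ref{conj:half-integer}) exhibits $\la,\mu$ with $t_{\mu\la}(1)\notin\N$ already at $n=4$, so $\sum_\mu t_{\mu\la}(1)2^{z(\mu)}[\Delta(\mu)]$ cannot be the class of an honest module $T(\la)$; and because $(t_{\mu\la})$ and $(\ell_{\mu\la})$ are algebraically inverse (up to the usual sign/permutation twist), the second formula breaks too. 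Thus the genuinely attainable theorem is the $\La_\Z$-analogue of Conjecture~\ref{conj:half-integer2}: one replaces $t_{\mu\la}(1)$ by the orthodox-basis value $t^{\mathfrak c}_{\mu\la}(1)$ (and $\ell_{\mu\la}$ by the dual-orthodox analogue), after which steps one and two go through with $\mathfrak V^n$ in place of $\widehat{\TT}^{n,\mf b}$. The real content, and the step I expect to be the main obstacle, is not the formal categorification but the comparison $t^{\mathfrak c}_{\mu\la}\neq t_{\mu\la}$: pinning down exactly how the orthodox basis of the type-$B$ tensor categorification departs from Lusztig's canonical basis.
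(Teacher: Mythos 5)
The statement you were asked to prove is Conjecture~\ref{conj:KL conj for integer}, which is Brundan's original ``type $B$'' Kazhdan--Lusztig conjecture for $\mc O_{n,\Z}$, recalled in the appendix of the paper. The paper contains \emph{no proof} of it; on the contrary, Remark~\ref{rem:remark on conj for integral weights} states explicitly that this conjecture is incorrect, precisely because of Tsuchioka's computation \cite{Tsu} showing that the coefficients $t_{\mu\la}(q)$ of the type $B$ canonical basis in \eqref{TLMN} can have negative coefficients with $t_{\mu\la}(1)\notin\N$ (already for $n=4$). Your diagnosis --- that the formula as literally worded cannot hold because $\sum_\mu t_{\mu\la}(1)2^{z(\mu)}[\Delta(\mu)]$ would then fail to be the class of an honest tilting module --- is exactly the paper's own conclusion, and your proposed remedy (replace Lusztig's canonical basis by the orthodox/can$\oplus$nical basis of a Webster-style categorification \cite{Web}) coincides with the modification the authors suggest in that same remark and attribute, for the neighbouring categories, to the forthcoming work of Brundan and Davidson \cite{BN}. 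So there is no gap to report in the usual sense: you correctly recognized that the statement is false and cannot be proved, which is the paper's position as well.

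Two caveats on the constructive part of your sketch. First, the numerical categorification you describe in step one (translation functors realizing the $\UU^{\mf b}$-action at $t=1$ on $K(\mc O^\Delta_{n,\Z})$) is indeed available from \cite{Br2} and parallels Propositions~\ref{prop:translation on K} and \ref{prop:translation on E}, but it only fixes the images of the Verma classes under the analogue of \eqref{Psi2}; the identification of $[T(\la)]$ with a distinguished bar-invariant basis is precisely where a unicity-of-categorification argument is required, and where in type $B$ --- unlike type $A$ --- the answer departs from Lusztig's basis. Second, Remark~\ref{rem:remark on conj for integral weights} indicates that for integer weights the expected categorified object is a module over a \emph{quantum covering group} (with the extra parameter $\pi$, $\pi^2=1$, specializing to type $B$ at $\pi=1$) built from a spin/super quiver Hecke algebra, rather than a verbatim transcription of the type $C$ setup of Conjecture~\ref{conj:half-integer2}; the paper leaves even the corrected formulation for $\mc O_{n,\Z}$ as an open suggestion, not a theorem.
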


\begin{rem}\label{rem:remark on conj for integral weights}
For the same reason as remarked in Section~\ref{subsec:KL:C}, Brundan's Conjecture~\ref{conj:KL conj for integer} is incorrect thanks to \cite{Tsu}.
One may hope that a spin/super version of Webster's tensor product $2$-category (see \cite{KKT} for a spin/super quiver Hecke algebra) might provide the right can$\oplus$nical basis
$T^{\mathfrak b}_\la$ which correspond to the tilting module $T(\la)$. Actually the Grothendieck group of such a $2$-category should be a module of
a {\em quantum covering group} over $\Z[q^{\pm 1}, \pi]$, with $\pi^2=1$ (see \cite{CHW, C}). The quantum covering group specializes at $\pi=1$ to a quantum group of type $B$.
 %\red{A canonical basis of the negative part of the quantum covering group is constructed in \cite{CHW} over $\Z[q^{\pm 1}, \pi]$.}
One would hope to have a $2$-parameter (i.e., $q$ and $\pi$) enhancement of the Kazhdan-Lusztig theory and its homological interpretation.
\end{rem}

Let $\FF^{n,{\mf b}}:={\TT}^{n,{\mf b}}/{\KK}^{n,{\mf b}} $ be the $n$th $t$-wedge space of $\VV^{\mf b}$ of type $B$ \cite[Section 3.1]{Br2} (cf.~\cite{JMO}) with $\Q(t)$-basis $\{\,F_\la:=\pi(N_{w_0\la}) \,|\,\la\in\La^{+}_{\Z}\,\}$, where $\pi : \TT^{n,{\mf b}} \rightarrow \FF^{n,{\mf b}}$ is the canonical projection. The bar involution on $\widehat{\TT}^{n,{\mf b}}$ induces a bar involution on the completion $\widehat{\FF}^{n,{\mf b}}$ of ${\FF}^{n,{\mf b}}$, and let $\{\,U_\la\,|\,\la\in\La^{+}_{\Z}\,\}$ be its associated unique bar-invariant topological $\Q(t)$-basis of $\widehat{\FF}^{n,{\mf b}}$, called the {\em canonical basis},  such that
\begin{align}\label{eq:CB on Fock B}
U_\la=\sum_{\mu\in\La^{+}_{\Z}} u_{\mu\la}(t) F_\mu,
\end{align}
with $u_{\la\la}(t)=1$, $u_{\mu\la}(t)
\in t\Z[t]$ for $\mu\succ\la$, and $u_{\mu\la}(t)=0$ otherwise \cite[Theorem 3.5]{Br2}. We have $\pi(T_{w_0\la})=U_\la$ if $\la\in \La^{+}_{\Z}$, and $0$ otherwise.
We put
\begin{equation*}
\EE^{n,{\mf b}}:=\bigoplus_{\la\in\La^{+}_{\Z}}\mathbb{Q}(t) E_\la \subset \widehat{\TT}^{n,{\mf b}},
\end{equation*}
where
\begin{equation}\label{eq:dual CB E B}
E_\la:=\sum_{\substack{\mu\in\La^{+}_{\Z}, \mu\preceq \la}}u_{-w_0\la,-w_0\mu}(t^{-1}) L_\mu,\quad \text{ for $\la\in\La^{+}_{\Z}$}.
\end{equation}
It is shown \cite[Theorem 3.16]{Br2} that $\{\,L_\la\,|\,\la\in\La^{+}_{\Z}\,\}$ is the unique bar-invariant basis of $\EE^{n,{\mf b}}$ such that $L_\la\in E_\la + \sum_{\mu\in\La^+_\Z}t^{-1}\Z[t^{-1}]E_\mu $, and
\begin{equation}\label{eq:dual CB L B}
L_\la=\sum_{\substack{\mu\in\La^{+}_{\Z}, \mu\preceq \la}}\ell_{\mu\la}(t) E_\mu.
\end{equation}
Put
\begin{equation*}
\begin{split}
&\EE^{n,\mf b}_{\Z[t^{\pm}]}:=\sum_{\la\in \La^+_\Z}\Z[t,t^{-1}]L_\la=\sum_{\la\in \La^+_\Z}\Z[t,t^{-1}]E_\la,\\
&\EE^{n,\mf b}_{\Z}:=\Z\otimes_{\Z[t,t^{-1}]}\EE^{n,\mf b}_{\Z[t^{\pm}]},
\end{split}
\end{equation*}
where $\Z$ is the right $\Z[t,t^{-1}]$-module with $t$ acting on $\Z$ as $1$.  Let $E_\la(1)=1\otimes E_\la$ and $L_\la(1)=1\otimes L_\la\in \EE^{n,\mf b}_{\Z}$, for $\la\in \La^+_\Z$.
Then we have the following.

\begin{thm}[Theorem 4.52 in \cite{Br2}]\label{thm:Brundan Main} Let {\rm $\Psi_e :K(\mc E_{n,\Z}) \rightarrow \EE^{n,\mf b}_\Z$} be the $\Z$-linear isomorphism defined by
$\Psi_e([E (\la)])= E_{\la}(1)$, for $\la\in\La^+_\Z.$
Then $\Psi_e \left([L(\la)]\right)={L}_{\la}(1)$, for $\la\in\La^+_\Z$.
\end{thm}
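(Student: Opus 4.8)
The plan is to match, entry by entry, the unitriangular transition matrix from $\{[E(\la)]\}$ to $\{[L(\la)]\}$ in $K(\mc E_{n,\Z})$ with the transition matrix from $\{E_\la\}$ to $\{L_\mu\}$ in $\EE^{n,\mf b}$ specialised at $t=1$. On the module side the unitriangularity, with respect to the Bruhat order on $\La_{\Z}$, is supplied by Sergeev's central-character formula (the integer-weight analogue of Lemma~\ref{lem:central character}) together with the linkage principle, which forces every composition factor $L(\mu)$ of $E(\la)$ to satisfy $\mu\preceq\la$ and ${\rm wt}(\mu)={\rm wt}(\la)$; on the algebra side it is \eqref{eq:dual CB E B}--\eqref{eq:dual CB L B}. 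So it suffices to compute each decomposition number $a_{\la\mu}$ in $[E(\la)]=\sum_\mu a_{\la\mu}[L(\mu)]$ and to show $a_{\la\mu}=u_{-w_0\la\,-w_0\mu}(1)$.

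First I would establish that, under $\Psi_e$, the exact translation functors ${\rm E}^{(r)}_i,{\rm F}^{(r)}_i$ act on $K(\mc E_{n,\Z})\cong\EE^{n,\mf b}_{\Z}$, up to the evident powers of $2$, as the divided powers $E^{(r)}_i,F^{(r)}_i$ of $\UU^{\mf b}$. This is the integer-weight analogue of Proposition~\ref{prop:translation on E}: starting from the character formula \eqref{eq:Euler character} for the Euler characteristics $E(\la)$, one uses the (super-)Littlewood--Richardson rule for the supersymmetric powers $S^r(V)$ and $S^r(V^*)$ to compute ${\rm ch}\bigl(E(\la)\otimes S^r(V^{(*)})\bigr)$, and then Sergeev's central-character formula identifies which summands lie in the block $\gamma\pm r\alpha_i$; this matches the known formula for the $\UU^{\mf b}$-action on the monomial basis, hence, by duality, on the $E_\la$ basis.

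The heart of the argument is then an induction on the degree of atypicality of $\la$. The base case is that $E(\la)=L(\la)$ for typical $\la$, which on the Fock-space side corresponds to the transition matrix being the identity in the ``typical region'' where the relevant monomials are already bar-invariant, so that $E_\la=L_\la$ there. For the inductive step I would use the categorical $\UU^{\mf b}$-action furnished by the translation functors: applying a suitable divided-power functor ${\rm E}^{(r)}_i$ or ${\rm F}^{(r)}_i$ to an irreducible $L(\nu)$ of strictly smaller atypicality, one shows, via the head/socle (crystal operators $\widetilde{e}_i,\widetilde{f}_i$) structure, that its image has a well-controlled leading composition factor $L(\la)$ together with corrections $L(\mu)$ with $\mu\prec\la$. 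On the algebra side the dual canonical basis $\{L_\la\}$ obeys exactly the same recursion, as this is the Leclerc--Thibon-type algorithm built into the construction of \eqref{TLMN}. Matching the two recursions along chains of translation functors joining every $L(\la)$ to the typical region then forces $a_{\la\mu}=u_{-w_0\la\,-w_0\mu}(1)$, equivalently $[L(\la)]=\sum_{\mu\preceq\la}\ell_{\mu\la}(1)[E(\mu)]$, i.e.\ $\Psi_e([L(\la)])=L_\la(1)$.

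I expect the inductive step to be the main obstacle: it requires setting up in full the categorical-quantum-group (Chuang--Rouquier / crystal-graph) structure of $\mc E_{n,\Z}$, in particular pinning down the head and socle of ${\rm F}_i L(\nu)$ and the multiplicities $[{\rm F}^{(r)}_i L(\nu):L(\mu)]$, and checking that these agree with the combinatorial algorithm on the Fock space. Extra bookkeeping, but no new idea, is needed for the $(t+t^{-1})^{-z(\la)}$-normalisation relating $M_\la$ to $N_\la$ and for the type \texttt{M} versus type \texttt{Q} dichotomy of the irreducible $\qn$-modules, which enters through the pairing with the injective category $\mc F_{n,\Z}$.
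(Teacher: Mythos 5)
The paper does not prove this theorem. It is a verbatim quotation of Theorem~4.52 of Brundan~\cite{Br2} (note the bracketed attribution in the theorem header), and it is used here as a black box: most prominently, in the proof of Theorem~\ref{thm:Main} the half-integer analogue is \emph{deduced} from this cited result together with \cite[Theorem 5.3]{CK} and Corollary~\ref{cor:dual CB for B = dual CB for C}. There is therefore no ``paper's own proof'' to compare against; what you are reconstructing is Brundan's argument in \cite{Br2}, which occupies most of Section~4 of that paper.

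As a reconstruction, your sketch captures the overall shape of Brundan's argument (translation-functor categorification of the $\mathfrak b_\infty$-action, typical base case $E(\la)=L(\la)$, and an induction pushing from the typical region), but the inductive step has a genuine gap that you flag but underestimate. After specialisation at $t=1$, the condition $\ell_{\mu\la}(t)\in t^{-1}\Z[t^{-1}]$ that (together with bar-invariance) uniquely characterises the dual canonical basis is lost. Knowing only that $\Psi_e$ intertwines ${\rm E}_i^{(r)},{\rm F}_i^{(r)}$ with $E_i^{(r)},F_i^{(r)}$ up to factors of $2$, and that the typical base case holds, does \emph{not} single out $\{[L(\la)]\}$ as the image of $\{L_\la(1)\}$: many $\Z$-bases satisfy that Chevalley recursion. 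Brundan supplies the missing rigidity by a careful analysis of the head and socle of ${\rm E}_i L(\nu)$ and ${\rm F}_i L(\nu)$, and of the behaviour of the indecomposable injectives $U(\la)$ under translation, showing that these refined data are exactly what is governed by the crystal (i.e.\ the $q\to 0$ structure) rather than the naive $q=1$ specialisation. Pinning down the multiplicities $[{\rm F}_i^{(r)}L(\nu):L(\mu)]$ against the Fock-space combinatorics is precisely the hard content of his Theorem~4.52; the sentence ``matching the two recursions then forces the equality'' collapses that content into an assertion.
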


It follows immediately from  \eqref{eq:dual CB E B}, \eqref{eq:dual CB L B}, and Theorem \ref{thm:Brundan Main} that for all $\la\in\La^+_\Z$
\begin{equation*}
\begin{split}
[E(\la)]
&=\sum_{\mu\in\La^{+}_{\Z},\, \mu\preceq \la}u_{-w_0\la,-w_0\mu}(1) [L(\mu)],\\
[L(\la)]
&=\sum_{\mu\in\La^{+}_{\Z},\, \mu\preceq \la}\ell_{\mu\la}(1) [E(\mu)].
\end{split}
\end{equation*}

\subsection{Canonical basis of the $q$-wedge spaces of types $A$ and $B$}

Similar to Section~ \ref{subsec:CBA=CBC}, we shall show that the canonical basis $\{\,U^{\mf a}_\la\,|\, \la\in\La_{\hZ}^+,\ {\rm sgn}(\la)\in \Sigma_{l,m}\,\}$ of type $A$ in Section \ref{subsec:CBA=CBC} coincides with part of the canonical basis of type $B$ on the $q$-wedge space.
%We refer the reader to Appendix \ref{sec:notations for B} for a brief review of the quantum group $\UU^{\mf b}$ of type ${\mf b}_\infty$ and the associated canonical basis  $\{\,U^{\mf b}_\la\, \}$ of the Fock space $\widehat{\FF}^{n,{\mf b}}$ studied in \cite{Br2}.

Recall that $\UU^{\mf b}$ is an algebra over $\Q(t)$ for an indeterminate $t$, while $\UU$ is over $\Q(q)$. From now on, let us assume $q=t^2$ or $t=q^{\hf}$. Since the $\Q(q)$-subalgebra of $\UU^{\mf b}$ generated by $E_i, F_i$ for $i\geq 1$ is isomorphic to $\UU^{\mf a}\subset \UU$ as a $\Q(q)$-algebra, we may regard that $\UU^{\mf a}\subset \UU^{\mf b}$.

Fix $n\ge 1$ and let $l, m\in\Z_+$ such that $l+m=n$. Let
\begin{equation*}
\FF^{\mf b}_{(l,m)}:=\bigoplus_{\substack{\la\in\La_{\Z}^{+,\times} \\ {\rm sgn}(\la)\in \Sigma_{l,m}}}\Q(q)F_\la,
\end{equation*}
where we define ${\rm sgn}(\la)$ as in \eqref{eq:sgn(lambda)}, and let $\widehat{\FF}^{\mf b}_{(l,m)}$ be its completion in $\widehat{\FF}^{n,{\mf b}}$.

\begin{lem}\label{lem:CB U on F(l,m)}
The set $\{\,U_\la\,|\, \la\in\La_{\Z}^{+,\times},\ {\rm sgn}(\la)\in \Sigma_{l,m}\,\}$ is a topological $\Q(q)$-basis of $\widehat{\FF}^{\mf b}_{(l,m)}$. In particular, we have $u_{\mu\la}(t)\in \Z[t^2]=\Z[q]$.
\end{lem}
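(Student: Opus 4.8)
The plan is to mimic the type $A$ versus type $C$ comparison of \secref{subsec:CBA=CBC}, now for types $A$ and $B$, and to split the statement into a Bruhat-order part (the topological basis) and a parity part ($u_{\mu\la}(t)\in\Z[t^2]$). For the first part I would start from \eqref{eq:CB on Fock B}: $U_\la=\sum_\mu u_{\mu\la}(t)F_\mu$ with $u_{\mu\la}(t)\neq 0$ only if $\mu\in\La^+_\Z$ and $\mu\succeq\la$. By the type $B$ analogue of \propref{prop:Bruhat PS=Br} (i.e.\ \cite[Proposition 3.3]{CK}), $\mu\succeq\la$ is equivalent to $\mu\succcurlyeq\la$, so there is a chain $\la=\nu_{(1)},\dots,\nu_{(k)}=\mu$ in $\La^\geqslant_\Z$ with $\nu_{(i+1)}=\nu_{(i)}+(\varepsilon_{s_i}-\varepsilon_{t_i})$, $s_i<t_i$, and $(\nu_{(i)},\varepsilon_{s_i}+\varepsilon_{t_i})=0$. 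The key observation is that each such move preserves ``zero-free with a fixed number of positive and negative parts'': the condition $(\nu,\varepsilon_s+\varepsilon_t)=0$ forces $\nu_s=-\nu_t$, and since $\nu$ is sorted and (inductively) zero-free, $\nu_s\geq 1$ and $\nu_t\leq -1$; after the move the entries become $\nu_s+1\geq 2$ and $\nu_t-1\leq -2$, still nonzero with unchanged signs. Iterating from $\la\in\La^{+,\times}_\Z$, every $\mu$ occurring in $U_\la$ lies in $\La^{+,\times}_\Z$ with ${\rm sgn}(\mu)\in\Sigma_{l,m}$, so $U_\la\in\wh\FF^{\mf b}_{(l,m)}$. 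Since $\{\,U_\la\,|\,\la\in\La^+_\Z\,\}$ is a topological basis of $\wh\FF^{n,\mf b}$, the matrix $(u_{\mu\la}(t))$ restricted to the index set $\{\,\la\in\La^{+,\times}_\Z : {\rm sgn}(\la)\in\Sigma_{l,m}\,\}$ is unitriangular with respect to $\succeq$ (with finite intervals), hence invertible over $\Z[t,t^{-1}]$, which gives the claimed topological $\Q(q)$-basis of $\wh\FF^{\mf b}_{(l,m)}$.

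For the parity claim I would first note that $\FF^{\mf b}_{(l,m)}$ is stable under $\UU^{\mf a}=\UU_q(\mf{sl}_\infty)\subset\UU^{\mf b}$: the generators $E_i,F_i$ ($i\geq 1$) shift indices by $\pm1$ within the positive (resp.\ negative) indices and kill $v_0$, and acting on a straightened zero-free monomial they never create an adjacent ``bad-order'' pair $v_r,v_{-r}$, so the only wedge relations ever triggered are the type $A$ relations (with parameter $q=t^2$), which stay inside $\FF^{\mf b}_{(l,m)}$. In fact $\FF^{\mf b}_{(l,m)}$ is isomorphic as a $\UU^{\mf a}$-module to the type $A$ $q$-wedge space $\bigwedge^l\VV_-\otimes\bigwedge^m\VV_+$ with $q=t^2$ (the type $B$ analogue of \lemref{lem:Fa=F}). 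By the first part the bar involution of $\wh\TT^{n,\mf b}$ restricts to $\wh\FF^{\mf b}_{(l,m)}$, and restricted to $\UU^{\mf a}$ the type $B$ bar involution is exactly the type $A$ one, since it sends $q=t^2\mapsto t^{-2}=q^{-1}$. Granting that this restricted involution coincides, under the above isomorphism, with the type $A$ bar involution on $\wh\FF^{\mf a}_{(l,m)}$ — the precise analogue of \corref{cor:ACB=CCB}/\thmref{thm:ACB=CCB}, established in this subsection and reused as \propref{thm:ACB=BCB} — the uniqueness of canonical bases yields $u_{\mu\la}(t)=u^{\mf a}_{\mu\la}(t^2)$, where $u^{\mf a}_{\mu\la}(q)\in q\Z[q]$ is the type $A$ entry; in particular $u_{\mu\la}(t)\in\Z[t^2]=\Z[q]$.

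The main obstacle is precisely the identification of the restricted bar involution with the type $A$ one. In the type $C$ case this was immediate from \propref{thm:head:can:basis}, whose proof relied on $F_0\VV_+=E_0\VV_-=0$; in type $B$ this fails because $F_0v_1=v_0\neq 0$, so the $\alpha_0$-components of the quasi-$\mc R$-matrix act nontrivially and produce monomials containing $v_0$. One must therefore show that, after projecting onto the zero-free wedge space $\wh\FF^{\mf b}_{(l,m)}$, these $\alpha_0$-contributions cancel — heuristically, that on monomials with no zero entries any occurrence of $v_0$ is created and destroyed in pairs, so that the surviving coefficients lie in $\Z[t^{\pm2}]$. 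I expect this cancellation argument, obtained by adapting the inductive proof of \propref{thm:head:can:basis} to track the three-dimensional $\alpha_0$-string $\{v_1,v_0,v_{-1}\}$, to be where the bulk of the work lies; the Bruhat-order part above is routine.
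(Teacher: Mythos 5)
Your overall strategy matches the paper's: establish the topological basis via the Bruhat chain argument (using \cite[Proposition 3.3]{CK}), and obtain the parity claim $u_{\mu\la}(t)\in\Z[t^2]$ by identifying the type $B$ canonical basis on the zero-free sector with a type $A$ canonical basis for $\UU^{\mf a}$ over $\Q(q)$ with $q=t^2$, then invoking uniqueness. The paper does exactly this, defining a $\psi_{\mf a}$-involution on $\widehat{\FF}^{\mf b}_{(l,m)}$ over $\Q(q)$ and proving the $\psi_{\mf a}$- and $\psi$-canonical bases of $\Q(t)\otimes_{\Q(q)}\widehat{\FF}^{\mf b}_{(l,m)}$ coincide by running the argument of Proposition~\ref{thm:head:can:basis}/Corollary~\ref{cor:ACB=CCB} with $\VV_\pm$ replaced by $\VV^{\mf b}_\pm$.

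However, your ``main obstacle'' rests on a computational error. With the paper's convention $\alpha_0=-\delta_1$, the operator $F_0$ lowers weight by $\alpha_0$, i.e.\ \emph{raises} it by $\delta_1$; hence $F_0v_1$ would have weight $2\delta_1$, which does not occur in $\VV^{\mf b}$, so $F_0v_1=0$. Both vanishing statements $F_0\VV^{\mf b}_+=0$ and $E_0\VV^{\mf b}_-=0$ hold exactly as in type $C$ --- the $\alpha_0$-string is $v_1\xrightarrow{E_0}v_0\xrightarrow{E_0}v_{-1}$ and $v_{-1}\xrightarrow{F_0}v_0\xrightarrow{F_0}v_1$, so $F_0$ annihilates $\VV^{\mf b}_+$ and $E_0$ annihilates $\VV^{\mf b}_-$. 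This is precisely why the paper can appeal to ``the same argument'' without qualification: the two observations that drive the induction in Proposition~\ref{thm:head:can:basis} carry over verbatim. What does genuinely change is that the $\alpha_0$-string is length three rather than two, so the cross-terms $\Theta^\pm_\nu$ with $\nu\notin Q^+_{\mf a}$ can create monomials containing $v_0$, i.e.\ signatures in $\{+,0,-\}^n$ rather than $\{+,-\}^n$. One must extend the filtration of \eqref{eq:filtration} to such signatures (with $-<0<+$ entrywise) to see that those terms still lie strictly below $\bf s$; the paper leaves this implicit. But this is a routine bookkeeping extension, not a cancellation argument --- you do not need $v_0$-terms to cancel, only to land in a lower filtration piece, where they vanish upon projecting to the zero-free wedge $\widehat{\FF}^{\mf b}_{(l,m)}$.
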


\begin{proof}
First we see that  $\FF^{\mf b}_{(l,m)}$ is a $\UU^{\mf a}$-submodule over $\Q(q)$ (cf.~\cite[Section 3.1]{Br2}). We define a bar involution $\psi_{\mf a}$ on $\widehat{\FF}^{{\mf b}}_{(l,m)}$ as in $\widehat{\FF}_{(l,m)}$ over $\Q(q)$. Then we have a $\psi_{\mf a}$-invariant canonical basis of $\widehat{\FF}^{{\mf b}}_{(l,m)}$, which is clearly equal to a $\psi_{\mf a}$-invariant canonical basis of $\Q(t)\otimes_{\Q(q)}\widehat{\FF}^{{\mf b}}_{(l,m)}$.

On the other hand, let $\psi$ denote the bar involution on $\widehat{\FF}^{n, \mf b}$ over $\Q(t)$. By \eqref{eq:CB on Fock B} and \cite[Proposition 3.3]{CK}, we have ${\rm sgn}(\mu)\in \Sigma_{l,m}$ for $\mu\in\La^+_\Z$ with $\mu\succ \la$ so that $U_\la \in \Q(t)\otimes_{\Q(q)}\widehat{\FF}^{\mf b}_{(l,m)}$.

Note that as a module over $\UU^{\mf a}_{\Q(t)}:=\Q(t)\otimes_{\Q(q)} \UU^{\mf a}$, we have $\VV^{\mf b}=\VV^{\mf b}_+\oplus \Q(t)v_0 \oplus \VV^{\mf b}_-$, where
$
\VV^{\mf b}_\pm:=\bigoplus_{a\in\N}\Q(t) v_{\pm a}.
$
 Note that $\VV^{\mf b}_+$ is a natural representation of $\UU^{\mf a}_{\Q(t)}$, and $\VV^{\mf b}_-$ is its restricted dual, respectively.
By the same argument as in the proof of Proposition \ref{thm:head:can:basis} and Corollary \ref{cor:ACB=CCB} where we replace $\VV_\pm$ with $\VV^{\mf b}_\pm$, we can prove that the $\psi_{\mf a}$-invariant canonical basis and the $\psi$-invariant canonical basis of $\Q(t)\otimes_{\Q(q)}\widehat{\FF}^{\mf b}_{(l,m)}$ coincide.

Therefore $\{\,U_\la\,|\, \la\in\La_{\Z}^{+,\times},\ {\rm sgn}(\la)\in \Sigma_{l,m}\,\}$ is the $\psi_{\mf a}$-invariant canonical basis of $\widehat{\FF}^{\mf b}_{(l,m)}$. In particular, this implies that $u_{\mu\la}(t)\in \Z[t^2]=\Z[q]$.
\end{proof}

Define a bijection
\begin{equation*}\label{sharp:map}
\begin{split}
\La_\hZ\quad & \stackrel{\sharp}{\longrightarrow} \quad \La^\times_{\Z},\\
\la=(\la_1,\ldots,\la_n)& \ {\longmapsto} \
\la^\sharp:=\left(\la_1+ {\rm s}_1\tfrac{1}{2},\ldots,\la_n+ {\rm s}_n\tfrac{1}{2}\right),
\end{split}
\end{equation*}
where ${\rm sgn}(\la)=({\rm s}_1,\ldots,{\rm s}_n)$.
The map also induces a bijection $\sharp : \La^{+}_\hZ \rightarrow \La^{+,\times}_\Z$, which preserves the Bruhat ordering $\succeq$ \cite[Lemma 3.2]{CK}. Recall that ${\FF}^{\mf a}_{(l,m)}$ is a $\UU^{\mf a}$-module with basis $\{\,F^{\mf a}_\la\,|\,\la\in\La_{\hZ}^+, \ {\rm sgn}(\la)\in \Sigma_{l,m}\,\}$ (see \eqref{eq:Fock space A}). As in Lemma \ref{lem:Fa=F}, we have the following.

\begin{lem}\label{lem:Fa=Fb}
There exists an isomorphism of $\UU^{\mf a}$-modules  $\FF_{(l,m)}^{\mf a} \rightarrow \FF^{\mf b}_{(l,m)}$
which sends $F^{\mf a}_\la$ to $F_{\la^\sharp}$, for $\la\in\La_{\hZ}^+$ with ${\rm sgn}(\la)\in \Sigma_{l,m}$.
\end{lem}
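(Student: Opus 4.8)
The plan is to run the (routine) construction behind \lemref{lem:Fa=F} in the type $B$ setting, the one extra ingredient being the index-shift $\sharp$ and the identification of $\VV_{\pm}$ with $\VV^{\mf b}_{\pm}$ that it induces.

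Since $q=t^{2}$ we have $\UU^{\mf a}\subset\UU^{\mf b}$, and $\VV^{\mf b}_{\pm}$ (over $\Q(q)$) is a $\UU^{\mf a}$-submodule of $\VV^{\mf b}$. First I would verify, on the generators $E_{i},F_{i},K_{i}$ ($i\ge1$), that $\VV_{+}\to\VV^{\mf b}_{+}$, $v_{r}\mapsto v_{r+\frac12}$, and $\VV_{-}\to\VV^{\mf b}_{-}$, $v_{r}\mapsto v_{r-\frac12}$ ($r\in\hZ_{+}$), are isomorphisms of $\UU^{\mf a}$-modules — this is clear, as each pair realises the natural representation of $\UU^{\mf a}=\UU_q(\mf a_{\infty})$, resp.\ its restricted dual, and the shift is exactly the $n=1$ instance of $\sharp$. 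Passing to wedge powers, the key (and only substantive) point is that the type $B$ wedge relations of \cite[Section~3.1]{Br2} among two indices of the same nonzero sign coincide with the type $A$ wedge relations with parameter $q=t^{2}$ (in keeping with \lemref{lem:CB U on F(l,m)}), while the relations involving $v_{0}$ or coupling a positive with a negative index play no role here. Hence the above isomorphisms induce $\UU^{\mf a}$-isomorphisms $\bigwedge^{l}\VV_{-}\xrightarrow{\sim}\bigwedge^{l}\VV^{\mf b}_{-}$ and $\bigwedge^{m}\VV_{+}\xrightarrow{\sim}\bigwedge^{m}\VV^{\mf b}_{+}$, the latter two being the images in $\FF^{n,\mf b}$ of $(\VV^{\mf b}_{-})^{\otimes l}$ and $(\VV^{\mf b}_{+})^{\otimes m}$ respectively, and therefore a $\UU^{\mf a}$-isomorphism $\FF^{\mf a}_{(l,m)}=\bigwedge^{l}\VV_{-}\otimes\bigwedge^{m}\VV_{+}\xrightarrow{\sim}\bigwedge^{l}\VV^{\mf b}_{-}\otimes\bigwedge^{m}\VV^{\mf b}_{+}$ carrying $F^{\mf a}_{\la}$ to the decomposable wedge labelled by $\la^{\sharp}$.

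Finally, as in \lemref{lem:Fa=F}, I would identify $\bigwedge^{l}\VV^{\mf b}_{-}\otimes\bigwedge^{m}\VV^{\mf b}_{+}$ with $\FF^{\mf b}_{(l,m)}$. The composite $(\VV^{\mf b}_{-})^{\otimes l}\otimes(\VV^{\mf b}_{+})^{\otimes m}\hookrightarrow\TT^{n,\mf b}\xrightarrow{\pi}\FF^{n,\mf b}$ is $\UU^{\mf a}$-equivariant and kills the two-sided-ideal pieces $(\KK^{l,\mf b}\cap(\VV^{\mf b}_{-})^{\otimes l})\otimes(\VV^{\mf b}_{+})^{\otimes m}+(\VV^{\mf b}_{-})^{\otimes l}\otimes(\KK^{m,\mf b}\cap(\VV^{\mf b}_{+})^{\otimes m})$ (these are among the defining relations of $\KK^{n,\mf b}$), so it descends to a $\UU^{\mf a}$-map on $\bigwedge^{l}\VV^{\mf b}_{-}\otimes\bigwedge^{m}\VV^{\mf b}_{+}$; for $\la\in\La^{+,\times}_{\Z}$ with ${\rm sgn}(\la)\in\Sigma_{l,m}$ one has $z(\la)=0$, so $M_{w_{0}\la}=N_{w_{0}\la}\in(\VV^{\mf b}_{-})^{\otimes l}\otimes(\VV^{\mf b}_{+})^{\otimes m}$, and the descended map carries the standard wedge basis onto $\{F_{\la}=\pi(N_{w_{0}\la})\}$, a linearly independent part of the basis $\{F_{\la}\mid\la\in\La^{+}_{\Z}\}$ of $\FF^{n,\mf b}$. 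Therefore it is a $\UU^{\mf a}$-isomorphism onto $\FF^{\mf b}_{(l,m)}$. Composing the two isomorphisms, and using that $\sharp:\La^{+}_{\hZ}\to\La^{+,\times}_{\Z}$ is an order-preserving bijection by \cite[Lemma~3.2]{CK}, yields the desired isomorphism $\FF^{\mf a}_{(l,m)}\to\FF^{\mf b}_{(l,m)}$, $F^{\mf a}_{\la}\mapsto F_{\la^{\sharp}}$.

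I expect the only step requiring genuine attention to be the wedge-relation comparison in the middle paragraph — it is exactly the content that makes \lemref{lem:Fa=F} a ``direct verification'' — and it is settled by the observation that the monomials $M_{w_{0}\la}$ under consideration lie in a single sign sector, so neither the $v_{0}$-relations nor the positive/negative coupling relations intervene; everything else is transcription of \lemref{lem:Fa=F}, and the $\UU^{\mf a}_{\Q(t)}$-linear form of the relevant statements is already implicit in the proof of \lemref{lem:CB U on F(l,m)}.
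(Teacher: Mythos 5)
Your proof is correct and follows exactly the route the paper intends: the paper itself dismisses this with ``As in Lemma~\ref{lem:Fa=F}, we have the following'' and does not spell out details, whereas you transcribe the argument for Lemma~\ref{lem:Fa=F} in the type $B$ setting, identifying $\VV_\pm\cong\VV^{\mf b}_\pm$ via the $\pm\tfrac12$-shift, matching the type $A$ and type $B$ wedge relations within a single sign sector (using $q=t^2$), and embedding $\textstyle\bigwedge^l\VV^{\mf b}_-\otimes\bigwedge^m\VV^{\mf b}_+$ into $\FF^{n,\mf b}$ via $\pi$. The only blemish is a notational slip in the description of the map $\VV_-\to\VV^{\mf b}_-$, where ``$v_r\mapsto v_{r-\frac12}$ ($r\in\hZ_+$)'' should read $r\in-\hZ_+$ (i.e.\ $v_{-r}\mapsto v_{-r-\frac12}$ for $r\in\hZ_+$), consistent with $\sharp$; the intent is clear and nothing else in the argument depends on it.
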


Now, we define a $\Q(q)$-linear isomorphism
$
\rho^\sharp : \widehat{\FF}^{n,\mf a} \longrightarrow \widehat{\FF}^{n,\mf b}
$
by $\rho^\sharp(F^{\mf a}_\la)=F_{\la^\sharp}$, for $\la\in\La_{\hZ}^+$.

\begin{prop}\label{thm:ACB=BCB}
We have
$%\begin{equation*}
\rho^\sharp(U^{\mf a}_\la) = U_{\la^\sharp},
$ %\end{equation*}
for $\la\in\La_{\hZ}^+$. In particular, we have $u^{\mf a}_{\mu\la}(q)=u_{\mu^\sharp\la^\sharp}(t)$, for $\la, \mu \in\La_{\hZ}^+$.
\end{prop}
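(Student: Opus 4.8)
The plan is to follow the template already established for the analogous type $A$/type $C$ comparison, namely Theorem~\ref{thm:ACB=CCB}, and adapt each step to the type $B$ setting. First I would record the structural facts that make the comparison possible: as a module over $\UU^{\mf a}_{\Q(t)}$ the natural representation decomposes as $\VV^{\mf b}=\VV^{\mf b}_+\oplus \Q(t)v_0\oplus \VV^{\mf b}_-$, with $\VV^{\mf b}_+$ a natural $\UU^{\mf a}$-module and $\VV^{\mf b}_-$ its restricted dual, while the extra one-dimensional summand $\Q(t)v_0$ plays no role here because $\La^{+,\times}_\Z$ (and $\La_\hZ$ under $\sharp$) has no zero parts. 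Lemma~\ref{lem:CB U on F(l,m)} already tells us $\{\,U_\la\mid \la\in\La^{+,\times}_\Z,\ {\rm sgn}(\la)\in\Sigma_{l,m}\,\}$ is the $\psi_{\mf a}$-invariant canonical basis of $\widehat{\FF}^{\mf b}_{(l,m)}$, and in particular that $u_{\mu\la}(t)\in\Z[q]$, so everything descends to a statement over $\Q(q)$.

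Next I would run the quotient argument exactly as in the proof of Theorem~\ref{thm:ACB=CCB}. On the tensor level, the key input is an analogue of Proposition~\ref{thm:head:can:basis} for $\TT^{n,\mf b}_{\bf s}:=\VV^{\mf b}_{{\rm s}_1}\otimes\cdots\otimes\VV^{\mf b}_{{\rm s}_n}$: for ${\bf s}\in\Sigma_{l,m}$ and $\la$ with ${\rm sgn}(\la)={\bf s}$, the type $B$ canonical basis $T_\la$ projects to the type $A$ canonical basis $T^{\mf a}_\la$ in $\widehat{\TT}^{n,\mf b}_{\bf s}$, and when ${\bf s}={\bf s}_{\min}$ one gets $\psi=\psi_{\mf a}$ on $\widehat{\TT}^{n,\mf b}_{{\bf s}_{\min}}$ and hence $T_\la=T^{\mf a}_\la$ (this is the ``$B$-side'' of Corollary~\ref{cor:ACB=CCB}). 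The proof carries over verbatim: one filters $\widehat{\TT}^{n,\mf b}$ by the Bruhat ordering $\leqq$ on $\Sigma_{l,m}$, observes $\psi(\widehat{\TT}^{n,\mf b}_{\le\bf s})\subseteq\widehat{\TT}^{n,\mf b}_{\le\bf s}$, and does the induction on $n$ using that $\Theta^+_\nu$ (resp.\ $\Theta^-_\nu$) for $\nu\notin Q^+_{\mf a}$ involves a factor $E_0$ (resp.\ $F_0$) which kills $\VV^{\mf b}_+$ (resp.\ $\VV^{\mf b}_-$) — this is precisely where the first-node difference between types $B$ and $C$ is irrelevant. Then, passing to the $q$-wedge quotient, $\pi(T_{w_0\la})=U_{\la^\sharp}$ for $\la^\sharp\in\La^{+,\times}_\Z$ by the type $B$ analogue of \eqref{can:in:quot} (i.e.\ \cite[Lemma 3.8]{Br2}), and likewise $\pi(T^{\mf a}_{w_0\la})=U^{'\mf a}_\la$, the $\psi_{\mf a}$-invariant canonical basis of $\widehat{\FF}^{\mf b}_{(l,m)}$ transported along Lemma~\ref{lem:Fa=Fb}. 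Since ${\rm sgn}(w_0\la)={\bf s}_{\min}$, the $B$-side of Corollary~\ref{cor:ACB=CCB} gives $T_{w_0\la}=T^{\mf a}_{w_0\la}$, hence $U_{\la^\sharp}=U^{'\mf a}_\la$.

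Finally I would close the argument as in Theorem~\ref{thm:ACB=CCB}: the isomorphism $\rho^\sharp$ commutes with $\psi_{\mf a}$ and preserves the Bruhat ordering $\succeq$ — the latter because $\sharp:\La^+_\hZ\to\La^{+,\times}_\Z$ preserves $\succeq$ by \cite[Lemma 3.2]{CK} and $\rho$ does by Lemma~\ref{lem:Fa=Fb} — so $\rho^\sharp$ carries the $\psi_{\mf a}$-invariant canonical basis $\{U^{\mf a}_\la\}$ of $\widehat{\FF}^{\mf a}_{(l,m)}$ to the $\psi_{\mf a}$-invariant canonical basis $\{U^{'\mf a}_{\la^\sharp}\}=\{U_{\la^\sharp}\}$ of $\widehat{\FF}^{\mf b}_{(l,m)}$ by uniqueness. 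Assembling over all $l+m=n$ gives $\rho^\sharp(U^{\mf a}_\la)=U_{\la^\sharp}$ for all $\la\in\La^+_\hZ$, and comparing coefficients in \eqref{eq:CB on Fock} and \eqref{eq:CB on Fock B} yields $u^{\mf a}_{\mu\la}(q)=u_{\mu^\sharp\la^\sharp}(t)$. I expect the only real work — and the main obstacle, though a mild one — to be verifying that the inductive proof of Proposition~\ref{thm:head:can:basis} genuinely goes through with $\VV^{\mf b}_\pm$ in place of $\VV_\pm$, i.e.\ checking that the presence of the zero weight space $\Q(t)v_0$ (which does occur inside $\VV^{\mf b}$ but never contributes to monomials indexed by $\La^{+,\times}_\Z$) does not disturb the $\psi$-invariance of the filtration steps; this is addressed by noting that $\bigwedge$ of $\VV^{\mf b}_\pm$ only involves the nonzero-index basis vectors, exactly as asserted in the proof of Lemma~\ref{lem:CB U on F(l,m)}.
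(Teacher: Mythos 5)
Your proposal is correct and follows essentially the same route as the paper's proof: you use the fact that $\rho^\sharp$ is a $\UU^{\mf a}$-module isomorphism commuting with $\psi_{\mf a}$ and preserving the Bruhat ordering, identify $\{U_{\la^\sharp}\}$ as the $\psi_{\mf a}$-invariant canonical basis of $\widehat{\FF}^{\mf b}_{(l,m)}$ (which is exactly Lemma~\ref{lem:CB U on F(l,m)}), and conclude by uniqueness. The only real difference is that you re-derive the content of Lemma~\ref{lem:CB U on F(l,m)} inline --- the ``$B$-side'' of Proposition~\ref{thm:head:can:basis} and Corollary~\ref{cor:ACB=CCB}, the observation that $\Q(t)v_0$ is inert, and the quotient argument --- whereas the paper simply cites that lemma; this is harmless redundancy, not a gap.
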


\begin{proof}
By Lemma \ref{lem:Fa=Fb}, we can show  by similar arguments as in Theorem \ref{thm:ACB=CCB} that
$\{\,\rho^\sharp(U^{\mf a}_\la)\,|\,\la\in\La_{\hZ}^+,\ {\rm sgn}(\la)\in \Sigma_{l,m}\,\}$ is a $\psi_{\mf a}$-invariant canonical basis of $\widehat{\FF}^{\mf b}_{(l,m)}$. On the other hand, we showed in the proof of Lemma \ref{lem:CB U on F(l,m)} that $\{\,U_\la\,|\, \la\in\La_{\Z}^{+,\times},\ {\rm sgn}(\la)\in \Sigma_{l,m}\,\}$ is also a $\psi_{\mf a}$-invariant  canonical basis of $\widehat{\FF}^{\mf b}_{(l,m)}$. Therefore $\rho^\sharp(U^{\mf a}_\la) = U_{\la^\sharp}$ for $\la\in\La_{\hZ}^+$ with ${\rm sgn}(\la)\in \Sigma_{l,m}$ by the uniqueness.
\end{proof}

\begin{cor}\label{cor:dual CB for B = dual CB for C}
For $\la, \mu\in\La_{\hZ}^+$, we have
$%\begin{equation*}
u_{\mu\la}(t^2) = u_{\mu^\sharp\la^\sharp}(t),$ and $\ell_{\mu\la}(t^2)=\ell_{\mu^\sharp\la^\sharp}(t).
$%\end{equation*}
\end{cor}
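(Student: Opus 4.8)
The plan is to obtain both identities purely formally from the two comparison results already established, \thmref{thm:ACB=CCB} and \propref{thm:ACB=BCB}, together with the standard unitriangular inversion relating the $u$-polynomials to the $\ell$-polynomials.

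For the first identity, recall that throughout this subsection we are assuming $q=t^2$. \thmref{thm:ACB=CCB} gives $u_{\mu\la}(q)=u^{\mf a}_{\mu\la}(q)$ and \propref{thm:ACB=BCB} gives $u^{\mf a}_{\mu\la}(q)=u_{\mu^\sharp\la^\sharp}(t)$, for all $\la,\mu\in\La^+_\hZ$. Chaining these and substituting $q=t^2$ yields $u_{\mu\la}(t^2)=u_{\mu^\sharp\la^\sharp}(t)$.

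For the second identity, I would use that, in each of types $B$ and $C$, the matrix $\big(\ell_{\nu\mu}\big)$ is the (transpose) unitriangular inverse of the matrix $\big(u_{-w_0\la\,-w_0\mu}(q^{-1})\big)$. Indeed, feeding \eqnref{formula:U} into \eqnref{formula:E} gives, for $\la,\nu\in\La^+_\hZ$,
\[
\sum_{\mu\in\La^+_\hZ} u_{-w_0\la\,-w_0\mu}(q^{-1})\,\ell_{\nu\mu}(q)=\delta_{\la\nu},
\]
and feeding \eqnref{eq:dual CB L B} into \eqnref{eq:dual CB E B} gives the same identity for $\la',\nu'\in\La^{+,\times}_\Z$ with $t$ in place of $q$ (and $t^{-1}$ in place of $q^{-1}$). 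In the type $B$ relation I would substitute $\la'=\la^\sharp$, $\mu'=\mu^\sharp$, $\nu'=\nu^\sharp$, using that $\sharp:\La^+_\hZ\to\La^{+,\times}_\Z$ is a bijection. Since $\sharp$ commutes with $w_0$ and with $\la\mapsto-\la$ (both immediate on the level of entries, taking the sign pattern into account), we get $-w_0\la^\sharp=(-w_0\la)^\sharp$; hence the first identity, applied to the pair $-w_0\la,-w_0\mu$ with $t$ replaced by $t^{-1}$, gives $u_{-w_0\la^\sharp\,-w_0\mu^\sharp}(t^{-1})=u_{-w_0\la\,-w_0\mu}(t^{-2})$. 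So at $q=t^2$ the transported type $B$ relation reads
\[
\sum_{\mu\in\La^+_\hZ} u_{-w_0\la\,-w_0\mu}(q^{-1})\,\ell_{\nu^\sharp\mu^\sharp}(t)=\delta_{\la\nu},
\]
which is exactly the linear system satisfied by $\big(\ell_{\nu\mu}(t^2)\big)$, with the identical coefficient matrix. Because $\sharp$ preserves the Bruhat ordering by \cite[Lemma 3.2]{CK}, both $\big(\ell_{\nu\mu}(t^2)\big)$ and $\big(\ell_{\nu^\sharp\mu^\sharp}(t)\big)$ are unitriangular with respect to $\preceq$, and uniqueness of the unitriangular inverse forces $\ell_{\mu\la}(t^2)=\ell_{\mu^\sharp\la^\sharp}(t)$.

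The only step that is not completely mechanical is the bookkeeping in the last paragraph: one must check that $\sharp$ really intertwines $w_0$, negation and $-w_0$, that the indexing sets and the nonvanishing ranges of the two matrices match after applying $\sharp$, and that the ambient completions $\wh{\EE}^n$, $\wh{\FF}^n$ are such that the unitriangular inversion is uniquely solvable (this is precisely what interval-finiteness of the Bruhat order provides). Everything of substance is already contained in \thmref{thm:ACB=CCB} and \propref{thm:ACB=BCB}, so this corollary is their formal consequence.
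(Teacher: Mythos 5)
Your proposal is correct and follows essentially the same route as the paper: the first identity is a direct chain of Theorem~\ref{thm:ACB=CCB} and Proposition~\ref{thm:ACB=BCB}, and the second is obtained by inverting the unitriangular transition matrices in both types and invoking the first identity. You are in fact a bit more careful than the published proof, which loosely says ``$(\ell_{\mu\la}(q))$ is the inverse of $(u_{\mu\la}(q))$''; your version correctly identifies the inverse of $(u_{-w_0\la\,-w_0\mu}(q^{-1}))$ and checks explicitly that $\sharp$ intertwines $-w_0$, points the paper leaves implicit.
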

\begin{proof}
The first identity follows immediately from Theorems \ref{thm:ACB=CCB} and \ref{thm:ACB=BCB}.
Consider the matrices $(u_{\mu\la}(q))$, where the indices $\la, \mu\in\La_{\hZ}^+$ are arranged with respect to a total ordering compatible with the Bruhat ordering. By \eqref{formula:U}, $(\ell_{\mu\la}(q))$ is the inverse of $(u_{\mu\la}(q))$. Similarly, by \eqref{eq:dual CB L B} $(\ell_{\mu^\sharp\la^\sharp}(t))$ is the inverse of
$(u_{\mu^\sharp\la^\sharp}(t))$, where the indices $\la^\sharp, \mu^\sharp\in\La_{\Z}^{+,\times}$ are arranged with respect to a total ordering induced from the bijection $\sharp$. Therefore the second identity follows from the first one.
\end{proof}

%A relation between canonical bases of type $B$ and $A$, which is a type $B$ variant of Conjecture \ref{conj:positivity of T-2} is also conjectured to hold.

%%%%%%%%%%
\bigskip
\frenchspacing

\end{document}